\newtheorem{thm}{Theorem}[section]
\newtheorem{lem}[thm]{Lemma}
\newtheorem{cor}[thm]{Corollary}
\theoremstyle{definition}
\theoremstyle{remark}
\numberwithin{equation}{section}
\newcommand{\bR}{{\mathbb R}}
\newcommand{\bZp}{{{\mathbb Z}^+}}
\newcommand{\Z}{{\mathbb Z}}
\newcommand{\bfe}{{\mathbf e}}
\newcommand{\bfw}{{\mathbf w}}
\newcommand{\bfx}{{\mathbf x}}
\newcommand{\bfu}{{\mathbf u}}
\newcommand{\bfv}{{\mathbf v}}
\newcommand{\mA}{{\mathcal A}}
\newcommand{\mB}{{\mathcal B}}
\newcommand{\upM}{{\overline{M}}}
\newcommand{\lowM}{{\underline{M}}}
\newcommand{\uplam}{{\overline{\lambda}}}
\newcommand{\upd}{{\overline{d}}}
\newcommand{\lowd}{{\underline{d}}}
\newcommand{\ups}{{\overline{s} }}
\newcommand{\lows}{{\underline{s}}}
\newcommand{\uprho}{{{\rho}}}
\newcommand{\Var}{\mathrm{Var}}
\def\vep{\varepsilon}
\def\R{\mathbb{R}}
\def\wdt{\widetilde}
\def\updimB{\overline{\dim}_B}
\def\lowdimB{\underline{\dim}_B}
\newcommand{\card}{{\rm card\,}}
\begin{document}

%\title[Construction and Box dimension of RFIS]{Construction and Box dimension of \\ recurrent fractal interpolation surfaces}

\title[Box dimension of RFIFs ]{Box dimension of the graphs of \\ recurrent fractal interpolation functions}

%\title[Box dimension of FIFs]{Box dimension of generalized affine fractal interpolation functions \\ with  vertical scaling functions }

\author{Lai Jiang}
\address{School of Fundamental Physics and Mathematical Sciences, Hangzhou Institute for Advanced Study, University of Chinese Academy of Sciences, Hangzhou 310024, China}
\email{jianglai@ucas.ac.cn}

\author{Xiao-Hui Li}
\address{School of Mathematics and Information Sciences, Yantai University, Yantai 264000, China}
\email{lxhmath@ytu.edu.cn}

%\thanks{The research of Li was supported by NSFC grant 11801592.}

\author{Zhen Liang}
\address{School of Mathematics, Nanjing Audit University, Nanjing 211815, China}
\email{zhenliang@nau.edu.cn}
%\thanks{The research of Liang was supported by the Natural Science Foundation of Higher Education Institutions of Jiangsu Province (22KJB110018)}

\author{Huo-Jun Ruan}
\address{School of Mathematical Sciences, Zhejiang University, Hangzhou 310058, China}
\email{ruanhj@zju.edu.cn}
%\thanks{The research of Ruan was supported in part by NSFC grant 12371089, and the Fundamental Research Funds for the Central Universities of China grant %2024FZZX02-01-01.}

\thanks{Corresponding author: Lai Jiang}

\subjclass[2010]{Primary 28A80; Secondary 41A30.}
\date{}

\keywords{recurrent fractal interpolation functions, restricted vertical scaling matrices, box dimension, iterated function system}

\begin{abstract}
Let $f$ be a generalized affine recurrent fractal interpolation function with vertical scaling functions. In this paper, by introducing underlying local iterated function systems of $f$, we define restricted vertical scaling matrices. Then we prove the monotonicity of spectral radii of these matrices without additional conditions. We also prove the irreducibility of these matrices under the assumption that vertical scaling functions are positive. With these results, we estimate the upper and lower box dimensions of the graphs of $f$ by the limits of spectral radii of restricted vertical scaling matrices. In particular, we obtain an explicit formula of the box dimension of the graph of $f$ under certain constraint conditions.
\end{abstract}

\maketitle

\section{Introduction}

Fractal functions, including Weierstrass functions and Takagi functions, are known as continuous but nowhere differentiable functions. There are numerous works on these functions. One of interests focuses on fractal dimensions of the graphs of these functions, including Hausdorff dimension and box dimension, see \cite{AllKaw11, BBR14, HuLau93, RenShen21, Shen18} and the references therein.

The classical Weierstrass function is defined by
\[
   W_{\lambda,b}(x)=\sum_{n=0}^\infty \lambda^n \cos(2\pi b^n x), \quad x\in [0,1],
\]  
where $b\geq 2$ is an integer and $1/b<\lambda<1$. It is well known that the graphs of these functions are invariant sets of certain iterated function systems (IFSs), see \cite[section~6]{JR24} for example. 

In 1986, Baransley \cite{B86} introduced fractal interpolation functions (FIFs). Basically, these functions interpolate give data while the graphs of these functions are invariant sets of certain IFSs. As a result, the classical Weierstrass functions and the classical Takagi function are all FIFs. Since 1986, FIFs have attracted many attentions to study their analytic properties \cite{All20,BH89,CKV15,Dubuc18,WY13} and fractal dimensions \cite{APN17,BRS20,BaMa15,HarMas86,JhaVer21,RSY09}. Meanwhile, these functions are suitable to fit fractal-like data, including speech data and the profiles of mountain ranges \cite{MH92,VDL94}.

Notice that the graph of a FIF is an invariant set of certain IFS. Thus, roughly speaking, a small piece of the graph is a copy of the whole graph. In order to generalize FIFs, Barnsley, Elton and Hardin \cite{BEH89} used recurrent IFSs to construct recurrent FIFs as follows.

Let $N\geq 2$ and $\{(x_n, y_n)\}_{n=0}^N $ be a given data set, where $x_0 < x_1 < \cdots < x_N$. Fix the index pairs $ \{\ell(n), r(n)\}_{n=1}^N \subset \{0,1,\ldots,N\} $ with $ \ell(n) < r(n) $ for all $n$. For each $1 \leq n \leq N $, we define $I_n = [x_{n-1}, x_n] $ and $ D_n = [x_{\ell(n)}, x_{r(n)}] $, and assume that the following conditions hold.

\begin{enumerate}
	\item $ L_n: D_n \to I_n $ is a contractive homeomorphism.
	\item $ F_n: D_n \times \mathbb{R} \to \mathbb{R} $ is continuous, and there exists $ 0 < \beta_n < 1 $ such that
	$$|F_n(x, y') - F_n(x, y'')| \leq \beta_n |y' - y''|, \quad x \in D_n, \;\; y', y'' \in \mathbb{R}.$$
	\item If we define $ W_{n}: D_n \times \mathbb{R} \to I_n \times \mathbb{R} $ by
	$W_{n}(x, y) = ( L_n(x), F_n(x, y))$, then
   $W_n$ satisfies the following interpolation constraint: 
	\[ 
	W_{n}\big(\{(x_{\ell(n)}, y_{\ell(n)}), (x_{r(n)}, y_{r(n)}) \}\big)= \{(x_{n-1}, y_{n-1}), (x_n, y_n)\}.
	\]
\end{enumerate}

Given a function $g$ on $E \subset\bR$, we denote the graph of $g$ by $\Gamma g=\{(x, g(x)):\, x \in E\}$. Additionally, assume that $J$ is a subset of $E$, we write $g|_J$ the restriction of $g$ on $J$, and let $\Gamma g|_{J}$ be the graph of $g|_{J}$.

\begin{thm}[\cite{BEH89,RXY21}]\label{thm:existence}
	There exists a unique continuous function $f$ on $[x_0, x_N]$ such that $ f(x_n) = y_n $ for all $0 \leq n \leq N $, and 
	\begin{equation}\label{eq:rfif}
		\Gamma f|_{I_n} = W_{n} \big( \Gamma f|_{D_n} \big), \quad  1 \leq n \leq N.
	\end{equation}
\end{thm}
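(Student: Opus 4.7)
The plan is to prove existence and uniqueness by the Banach fixed point theorem applied to an operator on a suitable complete metric space of continuous functions interpolating the given data.

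First I would introduce the space
\[
\mathcal{F}=\bigl\{\,g\in C[x_0,x_N]:g(x_n)=y_n,\ 0\le n\le N\,\bigr\},
\]
endowed with the sup-norm $\|\cdot\|_\infty$. This is a closed subset of the Banach space $C[x_0,x_N]$, hence a complete metric space. On $\mathcal{F}$ I would define the Read-Bajraktarević-type operator $T$ by
\[
(Tg)(x)=F_n\bigl(L_n^{-1}(x),\,g(L_n^{-1}(x))\bigr)\quad\text{for }x\in I_n,\ 1\le n\le N,
\]
which makes sense because $L_n$ is a homeomorphism $D_n\to I_n$, and $g$ is defined on $D_n\subset[x_0,x_N]$.

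The first technical step is to verify that $T$ maps $\mathcal{F}$ into itself. The operator $Tg$ is continuous on each closed interval $I_n$ as a composition of continuous maps, so the only issue is consistency at the shared nodes $x_n$ and the interpolation property $(Tg)(x_n)=y_n$. Both follow directly from the interpolation constraint on $W_n$: if, say, $L_n^{-1}(x_{n-1})=x_{\ell(n)}$, then because $g(x_{\ell(n)})=y_{\ell(n)}$ and $W_n(x_{\ell(n)},y_{\ell(n)})\in\{(x_{n-1},y_{n-1}),(x_n,y_n)\}$ must equal the one with first coordinate $x_{n-1}$, I read off $(Tg)(x_{n-1})=y_{n-1}$. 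The analogous computation at $x_n$ matches the left-hand value coming from $I_{n+1}$, so $Tg\in C[x_0,x_N]$ and $Tg\in\mathcal{F}$.

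Next I would establish the contraction estimate. For $g_1,g_2\in\mathcal{F}$ and $x\in I_n$, the uniform Lipschitz condition on $F_n$ gives
\[
\bigl|(Tg_1)(x)-(Tg_2)(x)\bigr|\le\beta_n\bigl|g_1(L_n^{-1}(x))-g_2(L_n^{-1}(x))\bigr|\le\beta_n\|g_1-g_2\|_\infty,
\]
hence $\|Tg_1-Tg_2\|_\infty\le(\max_n\beta_n)\|g_1-g_2\|_\infty$ with $\max_n\beta_n<1$. The Banach fixed point theorem then yields a unique $f\in\mathcal{F}$ with $Tf=f$, which is the desired interpolant. Finally, the identity $f(x)=F_n(L_n^{-1}(x),f(L_n^{-1}(x)))$ for $x\in I_n$ rewrites as $(x,f(x))=W_n(L_n^{-1}(x),f(L_n^{-1}(x)))$; letting $t=L_n^{-1}(x)$ range over $D_n$ gives $\Gamma f|_{I_n}=W_n(\Gamma f|_{D_n})$, completing the proof.

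The only genuinely delicate point is the well-definedness/continuity of $Tg$ at the interior nodes: one must check that when $x_n$ lies in both $I_n$ and $I_{n+1}$, the two formulas for $(Tg)(x_n)$ agree and equal $y_n$. This is purely a matter of carefully exploiting the two-point interpolation constraint encoded in condition (3), so it is the main bookkeeping obstacle but presents no conceptual difficulty.
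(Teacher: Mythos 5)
Your proposal is correct and is essentially the standard argument behind the cited result (the Read--Bajraktarevi\'c operator on the space of continuous interpolants, contraction in the sup-norm via the constants $\beta_n$, and the node-consistency check coming from the two-point interpolation constraint), which is exactly the route taken in \cite{BEH89,RXY21}; the paper itself only cites this proof rather than reproducing it. The one point worth stating explicitly is that $\mathcal{F}$ is nonempty (e.g.\ it contains the piecewise linear interpolant), so the Banach fixed point theorem applies.
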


The function $f$ in this theorem is called a \emph{recurrent fractal interpolation function} (RFIF) determined by the recurrent IFS $\{W_1,\ldots,W_N\}$. In the original work \cite{BEH89}, all $W_n$ are affine maps. The corresponding RFIF is called an \emph{affine RFIF}. Barnsley et. al. \cite{BEH89} obtained the box dimension of the graphs of affine RFIFs under certain conditions. This result was generalized by Ruan, Xiao and Yang \cite{RXY21}. We remark that in \cite{RXY21}, $L_n$ can be not contractive.

We call the function $f$ in Theorem~\ref{thm:existence} a \emph{generalized affine RFIF} if for each $1\leq n\leq N$,
\[
  L_n(x)=a_n x+b_n, \quad F_n(x,y)= S_n(x)y + q_n(x),
\]
where $a_n$ and $b_n$ are real numbers, $S_n(x)$ and $q_n(x)$ are continuous function on $D_n$ with $|S_n(x)|<1$ for all $x\in D_n$. 
It is very challenging to calculate the box dimension of the graphs of generalized affine RFIFs without any restriction. In this paper, we will study $\dim_B \Gamma f$, the box dimension of $\Gamma f$, where the following conditions are satisfied for each $n$.
\begin{itemize}
	\item[(A1)] $x_n-x_{n-1}=(x_N-x_0)/N$.
	\item[(A2)] $S_n$ is a continuous function of bounded variation on $D_n$ and $|S_n(x)|<1$ for all $x\in D_n$.
	\item[(A3)] $q_n$ is a continuous function of bounded variation on $D_n$.
\end{itemize}

In \cite{JR23, JR24}, the first author and the last author introduced the sequences of upper and lower vertical scaling matrices, and used the limits of spectral radii of these matrices to estimate the box dimension of the graphs of generalized affine FIFs. 
Naturally, we want to use similar method in the recurrent case. Notice that in \cite{JR23,JR24}, we have the following key property: lower vertical scaling matrices are irreducible if vertical scaling functions are positive. However, this key property does not hold in the recurrent case. Thus we need new method to estimate the box dimension of $\Gamma f$ in the present setting.

Now we sketch our strategy. Firstly, by using the relationship between the intervals $\{I_n;D_n;n=1,2\ldots,N\}$, we define strongly connected components $\Lambda_1,\ldots,\Lambda_m$ of $\{1,\ldots,N\}$. Then we introduce underlying local IFSs $\{L_n:\, n\in \Lambda_r\}$ of the generalized affine RFIF $f$. By using these local IFSs, we define restricted vertical scaling matrices. With these matrices, we can estimate the box dimension of $\Gamma f$ on a sequence of basic sets related with local IFS $\{L_n:\, n\in \Lambda_r\}$. On the other hand, we directly estimate the box dimension of $\Gamma f$ on the complement of basic sets. 
Finally, we successfully estimate the box dimension of the graphs of generalized affine RFIFs. See Section~2 for the definition of $\Lambda_r$, restricted vertical scaling matrices, and basic sets.

 The paper is organized as follows. 
 In Section 2, we give some definitions including restricted vertical scaling matrices, and also present main results. 
 In Section 3, we study irreducibility of restricted vertical scaling matrices and the monotonicity of spectral radii of these matrices. In Section 4, we present some inequalities and equalities for the box dimension of the graphs of generalized affine RFIFs on various sub-intervals of $[x_0,x_N]$. By using these results, in Section 5, we estimate the box dimension of the graphs of generalized affine RFIFs by the limits of spectral radii of restricted vertical scaling matrices. In Section~6, we present an example to illustrate our results.

\section{Preliminaries and main results}

\subsection{Basic sets and related notations}

For any $i,j\in \{1,\ldots,N\}$, we say there is an \emph{edge} from $j$ to $i$ if $I_j \subset D_i$.
Let $V$ be a nonempty subset of $\{1,\ldots,N\}$. For any $i,j\in V$, a finite sequence $\{i_{\ell}\}_{\ell=0}^{t}$ is called a \emph{path} in $V$ from $j$ to $i$, if $i_{0}=j$, $i_t=i$, $i_{\ell}\in V$ for all $0\leq \ell\leq t$, and there is an edge from $i_{\ell-1}$ to $i_{\ell}$ for each $1\leq \ell\leq t$. The set $V$ is called \emph{strongly connected} if for any $i,j\in V$, there is a path in $V$ from $j$ to $i$. Furthermore, $V$ is called a \emph{strongly connected component} of $\{1,\ldots,N\}$, if it is strongly connected and there is no strongly connected subset $U$ of $\{1,\ldots,N\}$ such that $V\subsetneq U$.

In the sequel of the paper, we always assume that $\{ \Lambda_1,\ldots,\Lambda_m\}$ is the family of all strongly connected components of $\{1,\ldots,N\}$. Given $r=1,\ldots,m$,
we naturally have a local IFS $\{D_n;L_n: n\in \Lambda_r\}$ (or $\{L_n: n\in \Lambda_r\}$ for short), while the corresponding local Hutchinson-Barnsley operator $W_{\textrm{local},r}:\, \mathcal{X}_r\to \mathcal{X}_r$ is defined by
\[
  W_{\textrm{local},r}(A)=\bigcup_{n\in \Lambda_r} L_n(A\cap D_n), \quad A\in \mathcal{X}_r.
\]
Here $\mathcal{X}_r$ is the family of all subsets of $\bigcup_{n\in \Lambda_r}D_n$. A nonempty set $K$ in $\mathcal{X}_r$ is called an invariant set of the local IFS $\{L_n: n\in \Lambda_r\}$ if $W_{\textrm{local},r}(K)=K$. We call $\{L_n:\, n\in \Lambda_r\}$, $1\leq r\leq m$, underlying local IFSs of the generalized affine RFIF $f$. For more details on local IFS, please see \cite{BarHur93,Kou08} and the references therein. 

For $r=1,\ldots,m$, we define $B_{r,0}=\bigcup_{n\in \Lambda_r} D_n$ and 
\begin{equation}\label{eq:B-rk-RecRelation}
	B_{r,k}=W_{\textrm{local},r}(B_{r,k-1})=\bigcup_{n\in \Lambda_r} L_n(B_{r,k-1}\cap D_n), \quad k \geq 1.
\end{equation}
The set $B_{r,k}$ is called a \emph{$k$-th basic set} of $\{L_n:\, n\in \Lambda_r\}$.
Notice that $B_{r,1}=\bigcup_{n\in \Lambda_r} I_n\subset B_{r,0}$ since $\Lambda_r$ is a strongly connected components of $\{1,\ldots,N\}$. Thus $\{B_{r,k}\}_{k\geq 0}$ is a decreasing sequence of nonempty compact sets with respect to $k$. As a result, $K_r=\bigcap_{k=0}^\infty B_{r,k}$ is a nonempty compact set. It is routine to check that $K_r$ is the maximal invariant set of the local IFS $\{L_n: n\in \Lambda_r\}$, while we do not need the fact in this paper.

There is another way to obtain basic sets. Let $ 1 \leq r \leq m $.
For each $k\geq 0$, we recursively define $\{\mB_{r,k}\}_{k\geq 0}$ by letting $\mB_{r,0}=\{D_n:\, n\in \Lambda_r\}$ and 
\[
  \mB_{r,k}=\big\{L_n(E):\,  n\in \Lambda_r,  E\in \mB_{r,k-1} \mbox{ with } E\subset D_n \big\}, \quad  k \geq 1.
\]
By definition, $\mB_{r,1}=\{I_n:\, n\in \Lambda_r\}$. Each set $E$ in $\mB_{r,k}$ is called a \emph{ $k$-th level basic interval} of the local IFS $\{L_n:\, n\in \Lambda_r\}$. It is obvious that $B_{r,k}=\cup \mB_{r,k}$ for all $r$ and $k$. 

In this paper, we always assume that the local IFS $\{L_n : n \in \Lambda_r\}$ satisfies the following uniformly contracting condition for all $1 \leq r \leq m$. 
\begin{enumerate}
	\item[(A4)] For any $r=1,\ldots,m$, there exists an integer $T_r\geq 2$ such that
	$$\frac{|D_n|}{|I_n|}=T_r, \quad  n \in \Lambda_r,$$ 
	where we use $|J|$ to denote the length of an interval $J$.
\end{enumerate}
Write the cardinality of $\Lambda_r$ by $d_r$. Assume that $\Lambda_r=\{ a_{r,1},a_{r,2}, \ldots ,a_{r,d_r} \}$ with
$a_{r,1}<a_{r,2}<\cdots< a_{r,d_r}$.
For any $k \geq 1 $, $1 \leq t \leq d_r$, and $1 \leq j \leq {T_r}^{k-1}$, we define 
\begin{equation}\label{eq:Iri-def}
   I_{r,(t-1){T_r}^{k-1}+j}^{k}=\Big[ x_{(a_{r,t}-1)}+\frac{(j-1)(x_N-x_0)}{N{T_r}^{k-1}}, x_{(a_{r,t}-1)}+\frac{j(x_N-x_0)}{N{T_r}^{k-1}}  \Big].
\end{equation}
For example, for each $1\leq t\leq d_r$,
\begin{equation}\label{eq:Irt-One}
	I_{r,t}^1=I_{r,(t-1){T_r}^0+1}^1=I_{a_{r,t}}.
\end{equation}

Notice that for each $E\in \mB_{r,k}$ with $k \geq 1$, there exists unique $1\leq i\leq d_r{T_r}^{k-1}$ such that $E=I_{r,i}^k$. Thus, for any $ k \geq 1$, we define
\[
	\Theta_{r,k}=\big\{ 1 \leq  i \leq d_r {T_r}^{k-1}: \,I_{r,i}^k \in \mB_{r,k}  \big\}.
\]
%It is clear that
%\[ 
%	\Theta_{r,k}=\big\{ 1 \leq  i \leq d_r {T_r}^{k-1}: \,I_{r,i}^k \subset B_{r,k}  \big\}.
%\]
%In particular, $\Theta_{r,1}=\{1,2,\ldots,d_r\}$.

\subsection{Restricted vertical scaling matrices}

Given $1\leq r\leq m$ and $k \in \mathbb{Z}^+$, we first define $d_r {T_r}^{k-1} \times d_r {T_r}^{k-1}$ matrices $\upM_{r,k}$ and $\lowM_{r,k}$ as follows.
For any $1 \leq i,j \leq d_r {T_r}^{k-1}$, let $1 \leq n \leq N$ be the unique element satisfying $I_{r,i}^k \subset I_{n}$. We define 
\[D_{r,i}^k=L_n^{-1}(I_{r,i}^k).\] 
Furthermore, we write
\begin{equation*}
	\ups_{r,i,j}^{k}= 
	\begin{cases}
		\max \big\{ \big| S_{n}(x) \big|:\, x \in   I_{r,j}^k\big\}, \quad & \mbox{if } I_{r,j}^k \subset D_{r,i}^k , \\
		0 ,\quad & \mbox{otherwise},
	\end{cases}
\end{equation*}
and
\begin{equation*}
	\lows_{r,i,j}^{k}=
	\begin{cases}
		\min \big\{ \big| S_{n}(x) \big|:\, x \in   I_{r,j}^k\big\} ,\quad & \mbox{if }  I_{r,j}^k \subset D_{r,i}^k, \\
		0 ,\quad & \mbox{otherwise}.
	\end{cases}
\end{equation*}
With the help of $\ups_{r,i,j}^{k}$ and $\lows_{r,i,j}^{k}$, we define the matrices $\upM_{r,k}$ and $\lowM_{r,k}$ by letting
$$\big(\upM_{r,k} \big)_{i,j} = \ups_{r,i,j}^k, \quad \big(\lowM_{r,k} \big)_{i,j} = \lows_{r,i,j}^k.$$
The matrix $\upM_{r,k}$ (resp. $\lowM_{r,k}$) is called the \emph{level-$k$ upper (resp. lower) vertical scaling matrix} associated with $\Lambda_r$.

Let $A=(a_{ij})_{n \times n}$ be a matrix. For any $ \Theta \subset \{1,2,\ldots,n\}$, we write $A|_\Theta=(a_{ij})_{i,j \in \Theta}$ and call it the \emph{principal submatrix} of $A$. The matrix $\upM_{r,k}|_{\Theta_{r,k}}$ (resp. $\lowM_{r,k}|_{\Theta_{r,k}}$) is called a \emph{upper (resp. lower) restricted vertical scaling matrix} associated with $\Lambda_r$ and $\Theta_{r,k}$. We remark that these matrices play a crucial role in our study.

\subsection{Main results}
First, we have the following results on the irreducibility of restricted vertical scaling matrices.

\begin{thm}\label{thm:irr}
	Let $1\leq r\leq m$. 
	\begin{enumerate}
		\item 	Assume that $S_n$ is positive on $D_n$ for all $n\in\Lambda_r$. Then $\lowM_{r,k}|_{\Theta_{r,k}}$ and $\upM_{r,k}|_{\Theta_{r,k}}$ are irreducible for all $k \in \Z^+$. 
		\item Assume that $S_n$ is positive on $D_n\cap K_r$ for all $n\in\Lambda_r$. Then there exists $k_0 \in \Z^+$ such that $\lowM_{r,k}|_{\Theta_{r,k}}$ and $\upM_{r,k}|_{\Theta_{r,k}}$ are irreducible for all $k \geq k_0$.
	\end{enumerate}
\end{thm}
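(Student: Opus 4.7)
The plan is to recast irreducibility of each restricted vertical scaling matrix as strong connectivity of its associated directed graph $G_{r,k}$ on $\Theta_{r,k}$, where $i\to j$ iff the $(i,j)$ entry is positive. Writing $n$ for the unique index with $I_{r,i}^{k}\subset I_n$, the positivity of $(\lowM_{r,k})_{i,j}$ splits into a combinatorial condition $I_{r,j}^{k}\subset D_{r,i}^{k}$ and an analytic condition that $|S_n|$ stay bounded away from $0$ on $I_{r,j}^{k}$; the same is true of $\upM_{r,k}$ with max in place of min, and both matrices share the same combinatorial support. Hence the task reduces to establishing combinatorial strong connectivity once and then checking when the analytic condition is automatic.

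For the combinatorial step I would code basic intervals as sequences. Assumption (A4) makes $|D_n|=T_r(x_N-x_0)/N$ constant over $n\in\Lambda_r$, so $D_m\subset D_n$ forces $D_m=D_n$; combined with the fact that each $D_n$ is a union of $T_r$ equal consecutive interpolation intervals, this rigidity rules out any partial fit and produces a bijection between $\mB_{r,k}$ and admissible addresses $(n_1,\dots,n_k)\in\Lambda_r^{k}$, where admissibility simply means $I_{n_{j+1}}\subset D_{n_j}$ for $1\le j\le k-1$. Under this coding $D_{r,i}^{k}$ has address $(n_2,\dots,n_k)$, so the one-step successors of $i$ are exactly those $j$ whose address is $(n_2,\dots,n_k,p)$ for some $p\in\Lambda_r$ with $I_p\subset D_{n_k}$; in other words $G_{r,k}^{\mathrm{comb}}$ is a left-shift graph with admissible appends. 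Strong connectivity of $\Lambda_r$ then lets me connect any address $(n_1,\dots,n_k)$ to any target $(m_1,\dots,m_k)$: first I shift along a $\Lambda_r$-path to bring the last coordinate to some $q$ with $I_{m_1}\subset D_q$ (such a $q$ exists since $m_1$ lies in the SCC and hence has outgoing edges inside $\Lambda_r$), and then I perform $k$ further shifts that append $m_1,m_2,\dots,m_k$ in order.

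The analytic condition is immediate in case (1): $S_n$ is positive and continuous on the compact interval $D_n$, so $\min_{D_n}S_n>0$ and $\lows_{r,i,j}^{k}>0$ at every combinatorial edge, giving irreducibility for all $k\ge 1$. For case (2), continuity together with compactness of $D_n\cap K_r$ yields, for each $n\in\Lambda_r$, a constant $\delta_n>0$ and a closed neighborhood $U_n\subset D_n$ of $D_n\cap K_r$ on which $S_n\ge\delta_n$. Since $\{B_{r,k}\cap D_n\}_{k\ge 0}$ is a decreasing sequence of compact sets with intersection $D_n\cap K_r$, it converges to $D_n\cap K_r$ in the Hausdorff metric, so there exists a uniform $k_0$ with $B_{r,k}\cap D_n\subset U_n$ for all $n\in\Lambda_r$ and $k\ge k_0$. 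Any $I_{r,j}^{k}\subset D_{r,i}^{k}$ lies in $B_{r,k}\cap D_n$, so $S_n\ge\delta_n>0$ on $I_{r,j}^{k}$ and irreducibility holds for every $k\ge k_0$.

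The main obstacle I anticipate is the rigidity step at the start of the combinatorial argument, namely showing that under (A4) the admissibility of a level-$k$ address truly collapses to the per-step condition $I_{n_{j+1}}\subset D_{n_j}$ rather than to a weaker position-dependent "this particular basic interval happens to lie in $D_{n_j}$" condition. The key observation is that any level-$(k-1)$ basic interval with outer index $n_{j+1}$ sits inside the single interpolation interval $I_{n_{j+1}}$, and since $D_{n_j}$ is a union of whole interpolation intervals of equal length, such a basic interval can fit inside $D_{n_j}$ only when $I_{n_{j+1}}$ does. Once this rigidity is carefully nailed down, the shift-graph description of $G_{r,k}^{\mathrm{comb}}$ and the two analytic arguments are essentially routine.
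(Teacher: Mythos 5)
Your argument is correct, and its overall skeleton coincides with the paper's: reduce irreducibility to strong connectivity of the common combinatorial support digraph on $\Theta_{r,k}$, then make every such entry positive by a compactness argument (trivial when $S_n>0$ on all of $D_n$, and via a uniform level $k_0$ when $S_n>0$ only on $D_n\cap K_r$). Where you genuinely diverge is in the proof of the connectivity lemma. The paper's Lemma~3.3 establishes exactly your graph statement, but by induction on $k$: a level-$p$ path $I^p_{r,i_{\ell-1}}\subset D^p_{r,i_\ell}$ is lifted to level $p+1$ by applying the maps $L_{n_t}$ along the path. You instead code each element of $\mB_{r,k}$ by an admissible word $(n_1,\dots,n_k)\in\Lambda_r^k$ with per-step condition $I_{n_{j+1}}\subset D_{n_j}$, observe that $D_{r,i}^k$ is the basic interval with the shifted address, so the support digraph is a left-shift with admissible append, and then concatenate a $\Lambda_r$-path with the target word. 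This buys an explicit path of controlled length and makes the shift structure visible, at the price of the rigidity step you flag; that step is fine (any nondegenerate subinterval of $I_{n_{j+1}}$ contained in $D_{n_j}$, a union of whole interpolation intervals, forces $I_{n_{j+1}}\subset D_{n_j}$, since distinct $I_p$ have disjoint interiors), and it plays the role the paper gets from its Lemma~3.1(5) and the inductive bookkeeping; note it needs only that each $D_n$ is a union of whole intervals $I_p$, not the full strength of (A4). In part (2) there is also a mild packaging difference: the paper takes $k_1$ with $S_n>0$ on $D_n\cap B_{r,k}$ for $k\geq k_1$ and uses $D^k_{r,i_\ell}\subset B_{r,k-1}$, hence works for $k\geq k_1+1$, while you use $I^k_{r,j}\subset B_{r,k}\cap D_n\subset U_n$ directly; both are the same decreasing-compacts argument and both give the required $k_0$.
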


%\red{Moreover, we can generate the result in Theorem~\ref{thm:irr} as following.}
%\begin{thm}\label{thm:irr-gen}
%	Let $1\leq r\leq m$. \red{Let $K_r$ be the biggest invariant set of LIFS $\{L_n: n  \in \Lambda_r\}$.}
%	 Assume that $S_n$ is positive on $D_n$ for all $n\in\Lambda_r$. $\lowM_{r,k}|_{\Theta_{r,k}}$ and $\upM_{r,k}|_{\Theta_{r,k}}$ are irreducible for all $k \geq k_0$. %%$1 \leq r \leq m$ and
%\end{thm}

Given an $n\times n$ matrix $A$, we write
$\rho(A)=\max\{|\lambda_i|: 1 \leq i \leq n\}$, where $\lambda_1,\ldots,\lambda_n$ are eigenvalues of $A$.
%$\sigma(A)$ the set of all eigenvalues of $A$ and define $\rho(A)=\max\{|\gl|:\, \gl\in \sigma(A)\}$. 
We call $\rho(A)$ the \emph{spectral radius} of $A$.
Using the above theorem, we study the monotonicity of spectral radii of $\lowM_{r,k}|_{\Theta_{r,k}}$ and $\upM_{r,k}|_{\Theta_{r,k}}$. 
\begin{thm}\label{thm:rho-up-sub}
	Let $1\leq r\leq m$.
	\begin{enumerate}
		\item 	The sequence $\{\rho(\upM_{r,k}|_{\Theta_{r,k}})\}_{k=1}^\infty$ is decreasing with respect to $k$. Thus, the limit $\lim_{k \to \infty}\rho(\upM_{r,k}|_{\Theta_{r,k}})$ exists, denoted by $\uprho_{r}$.
		\item The sequence $\{\rho(\lowM_{r,k}|_{\Theta_{r,k}})\}_{k=1}^\infty$ is increasing with respect to $k$. Thus, the limit $\lim_{k \to \infty}\rho(\lowM_{r,k}|_{\Theta_{r,k}})$ also exists. Moreover, if $S_n$ is positive on $D_n \cap K_r$ for all $n\in \Lambda_r$, then
		$\lim_{k \to \infty}\rho(\lowM_{r,k}|_{\Theta_{r,k}})=\uprho_r.$		
	\end{enumerate}
\end{thm}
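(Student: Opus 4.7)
The plan is to lift everything to $n$th powers and walk sums. Using the standard identity $\lim_{n\to\infty}(\mathbf{1}^T A^n\mathbf{1})^{1/n}=\rho(A)$ for nonnegative matrices $A$ (which follows from sandwiching $\mathbf{1}^T A^n\mathbf{1}$ between the max-entry norm of $A^n$ and its $m^2$-multiple, where $m$ is the matrix size), it suffices to compare $\mathbf{1}^T(\upM_{r,k}|_{\Theta_{r,k}})^n\mathbf{1}$ across $k$. A length-$n$ walk $i_0,\ldots,i_n$ in $\Theta_{r,k}$ with every entry $(\upM_{r,k})_{i_{t-1},i_t}$ nonzero is encoded by a single length-$(n+k-1)$ walk $(q_0,\ldots,q_{n+k-1})$ in the directed graph on $\Lambda_r$ via the sliding window $i_t=(q_{n-t},q_{n-t+1},\ldots,q_{n-t+k-1})$; unwinding the definitions gives
\[
 \mathbf{1}^T(\upM_{r,k}|_{\Theta_{r,k}})^n\mathbf{1} = \sum_{(q_0,\ldots,q_{n+k-1})}\prod_{s=0}^{n-1} f_k(q_s,\ldots,q_{s+k}),
\]
with local potential
\[
 f_k(q_s,\ldots,q_{s+k}) := \max\bigl\{|S_{q_{s+k}}(x)| : x\in L_{q_{s+k-1}}\circ\cdots\circ L_{q_{s+1}}(I_{q_s})\bigr\}.
\]
Replacing $\max$ by $\min$ gives the companion formula for $\lowM_{r,k}|_{\Theta_{r,k}}$ with potential $\tilde f_k$.

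\emph{Monotonicity.} The edge condition $q_s\to q_{s+1}$ means $I_{q_s}\subset D_{q_{s+1}}$, so $L_{q_{s+1}}(I_{q_s})\subset I_{q_{s+1}}$, and composing on the outside gives $L_{q_{s+k}}\circ\cdots\circ L_{q_{s+1}}(I_{q_s})\subset L_{q_{s+k}}\circ\cdots\circ L_{q_{s+2}}(I_{q_{s+1}})$. Hence $f_{k+1}(q_s,\ldots,q_{s+k+1})\leq f_k(q_{s+1},\ldots,q_{s+k+1})$ pointwise, with the dual bound $\tilde f_{k+1}\geq \tilde f_k$ (shifted). Substituting into the level-$(k+1)$ walk sum the integrand no longer depends on $q_0$; summing $q_0$ over $\{m\in\Lambda_r: m\to q_1\}$ introduces a factor in $[1,\Delta_r]$, where $\Delta_r$ is the maximum in-degree in $\Lambda_r$ and the lower bound $1$ comes from strong connectivity. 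After re-indexing this yields
\[
 \mathbf{1}^T(\upM_{r,k+1}|_{\Theta_{r,k+1}})^n\mathbf{1}\leq\Delta_r\cdot\mathbf{1}^T(\upM_{r,k}|_{\Theta_{r,k}})^n\mathbf{1}
\]
and the reverse for $\lowM$. Taking $n$th roots and $n\to\infty$ (so that $\Delta_r^{1/n}\to 1$) gives the monotonicity claims; existence of the limits follows from monotonicity and boundedness in $[0,1]$ (since $|S_n|<1$).

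\emph{Equality under positivity.} By strong connectivity of $\Lambda_r$ one can prepend in-neighbors indefinitely to any valid length-$k$ path, so every $I_{r,i}^k\in\mB_{r,k}$ meets $K_r$; in particular every point of $I_{r,i}^k\subset D_n$ is within $|I_{r,i}^k|=(x_N-x_0)/(NT_r^{k-1})$ of $D_n\cap K_r$. Continuity and positivity of $|S_n|$ on the compact set $D_n\cap K_r$ then produce a uniform $c>0$ with $|S_n|\geq c$ on every level-$k$ basic interval for $k\geq k_1$. Uniform continuity of the finitely many $|S_n|$ on their domains additionally drives the oscillation of $|S_n|$ on each level-$k$ basic interval below any $c\varepsilon$ for $k$ sufficiently large, giving the entrywise bound $\upM_{r,k}|_{\Theta_{r,k}}\leq(1+\varepsilon)\lowM_{r,k}|_{\Theta_{r,k}}$. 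By entrywise monotonicity of the spectral radius for nonnegative matrices, $\rho(\upM_{r,k}|_{\Theta_{r,k}})\leq(1+\varepsilon)\rho(\lowM_{r,k}|_{\Theta_{r,k}})$; together with the trivial $\lowM_{r,k}\leq\upM_{r,k}$, letting $k\to\infty$ and then $\varepsilon\downarrow 0$ gives the desired equality. The main technical step I expect to be delicate is the walk-factorization identity itself: one needs to check carefully that the restriction to $\Theta_{r,k}$ corresponds exactly to walks staying inside $\Lambda_r$ (rather than the ambient index set $\{1,\ldots,d_rT_r^{k-1}\}$) and that marginalization of the extra $q_0$ contributes only the $k$-independent factor $\Delta_r$, which vanishes under the $n$th-root limit.
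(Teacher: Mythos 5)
Your proposal is correct in substance, and for the monotonicity claims it takes a genuinely different route from the paper. The paper introduces the refined matrices $\upM_{r,k}^*$ and $\lowM_{r,k}^*$ indexed by level-$(k+1)$ intervals and proves two eigenvector lemmas, $\rho(\upM_{r,k}^*|_{\wdt{\Theta}_{r,k}})=\rho(\upM_{r,k}^*|_{\Theta_{r,k+1}})$ and $\rho(\upM_{r,k}|_{\Omega})=\rho(\upM_{r,k}^*|_{\wdt{\Omega}})$, then compares $\upM_{r,k}^*$ with $\upM_{r,k+1}$ entrywise; you instead work with the walk sums $\mathbf{1}^TA^n\mathbf{1}$ and the higher-block structure of $\Theta_{r,k}$. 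The coding step you flag as delicate does go through: each element of $\mB_{r,k}$ is $L_{n_k}\circ\cdots\circ L_{n_2}(I_{n_1})$ for a unique path $(n_1,\ldots,n_k)$ in $\Lambda_r$ (the path is recovered from the interval by locating it in some $I_{n_k}$ and pulling back), $D_{r,i}^k$ is then the level-$(k-1)$ basic interval of the truncated path, and since distinct basic intervals of the same level have disjoint interiors, $I_{r,j}^k\subset D_{r,i}^k$ holds exactly when the two $k$-windows overlap in $k-1$ symbols. The marginalization over $q_0$ costs a factor between $1$ and the maximal in-degree (every vertex of $\Lambda_r$ has an in-neighbour in $\Lambda_r$ by strong connectedness), and this constant disappears under the $n$-th root limit $\lim_n(\mathbf{1}^TA^n\mathbf{1})^{1/n}=\rho(A)$, which is legitimate for nonnegative $A$ by norm equivalence and Gelfand's formula. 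So you get monotonicity with no eigenvectors and no auxiliary matrices, at the price of the symbolic bookkeeping; the paper's route stays at the level of a fixed $k$ and avoids the coding. Your argument for the equality in part (2) (each $I_{r,j}^k$ meets $K_r$, a uniform positive lower bound $c$ plus small oscillation give $\upM_{r,k}|_{\Theta_{r,k}}\leq(1+\vep)\lowM_{r,k}|_{\Theta_{r,k}}$ for large $k$) is essentially the paper's.

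One justification needs repair: the spectral radii do not lie in $[0,1]$, since a row of $\upM_{r,k}$ can have up to $T_r$ nonzero entries each less than $1$ (in the paper's example $\rho(\upM_{1,k}|_{\Theta_{1,k}})\approx 1.8$). Existence of the limits still follows immediately: the decreasing sequence in (1) is bounded below by $0$, and the increasing sequence in (2) is bounded above because $\lowM_{r,k}|_{\Theta_{r,k}}\leq\upM_{r,k}|_{\Theta_{r,k}}$ entrywise gives $\rho(\lowM_{r,k}|_{\Theta_{r,k}})\leq\rho(\upM_{r,k}|_{\Theta_{r,k}})\leq\rho(\upM_{r,1}|_{\Theta_{r,1}})<\infty$ by part (1).
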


%\begin{cor}
%	Let $1 \leq r \leq m$. Assume that $S_n$ is positive on $D_n$ for all $n\in \Lambda_r$, then
%	$$\lim_{k \to \infty}\rho(\lowM_{r,k}|_{\Theta_{r,k}})=\uprho_r.$$
%\end{cor}

Fix $1\leq r\leq m$. In the case that $S_n$ is positive on $D_n\cap K_r$ for all $n\in \Lambda_r$, let $k_r^*$ be the smallest positive integer such that $S_n$ is positive on $D_n\cap B_{r,k_r^*}$ for all $n\in \Lambda_r$. We define
\begin{equation}\label{eq:def-drstar}
	d_r^*=\begin{cases}
		1+\log\rho_r/\log T_r, & \textrm{if } \Var(f,E)=+\infty \textrm{ for some } E\in \mB_{r,k_r^*}, \\
		1, & \textrm{otherwise}.
	\end{cases}
\end{equation}
Here $\Var(f,E)$ is the classical total variation of $f$ on $E$. 
We obtain the following result on the box dimension of $\Gamma f$. 
\begin{thm}\label{thm:Main-Results-x1}
	Let $f$ be a generalized affine RFIF satisfying conditions (A1)-(A4). Then the following results hold.
	\begin{enumerate}
		\item $\updimB \Gamma f \leq 1 + \max\big\{ \log\uprho_1/ \log T_1,\log\uprho_2/\log T_2,\ldots, \log \uprho_m/ \log T_m,0 \big\}$.
		\item If for all $1\leq r\leq m$ and all $n\in \Lambda_r$, the function $S_n $ is positive on $D_n \cap K_r$, then 
		\[
		 \dim_B \Gamma f = \max\{d_1^*, d_2^*,\ldots, d_m^*,1\}.
		\]		
	\end{enumerate}
\end{thm}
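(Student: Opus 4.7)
The plan is to obtain the upper bound in (1) via a covering argument driven by the recursive structure of level-$k$ basic intervals, and to derive a matching lower bound in (2) by exploiting the irreducibility provided by Theorem~\ref{thm:irr}.

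For Part~(1), fix $1 \leq r \leq m$. The key inequality I would establish is
\begin{equation*}
   \updimB \Gamma f|_{B_{r,k}} \leq 1 + \frac{\log \rho(\upM_{r,k}|_{\Theta_{r,k}})}{\log T_r}
\end{equation*}
for every $k \geq 1$, after which Theorem~\ref{thm:rho-up-sub}(1) lets me pass to $\uprho_r$ by letting $k \to \infty$. To prove this, I would iterate the self-referential relation $\Gamma f|_{I_n} = W_n(\Gamma f|_{D_n})$ from Theorem~\ref{thm:existence} exactly $\ell$ times starting from an interval in $\mB_{r,k}$. Because $F_n(x,y) = S_n(x) y + q_n(x)$, the oscillation of $f$ on a level-$(k+\ell)$ basic interval with itinerary $(i_0, \ldots, i_\ell) \in \Theta_{r,k}^{\ell+1}$ telescopes into $\prod_{t=1}^\ell \ups^k_{r, i_{t-1}, i_t}$ times an oscillation on a level-$k$ interval, plus an additive error driven by the total variations of the $q_n$ (finite by assumption (A3)). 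Summing over itineraries bounds $\sum_{E'\in\mB_{r,k+\ell}} \mathrm{osc}(f, E')$ by $\mathbf{1}^{\top}(\upM_{r,k}|_{\Theta_{r,k}})^{\ell}\mathbf{v}$ up to lower-order terms; Gelfand's formula converts this into growth at rate $\rho(\upM_{r,k}|_{\Theta_{r,k}})^{\ell}$, and the standard box-counting identity at scale $\delta \asymp T_r^{-(k+\ell)}$ yields the stated inequality. To handle the complement $[x_0, x_N] \setminus \bigcap_k \bigcup_r B_{r,k}$, I would observe that every point there has bounded escape time from each local IFS; a finite unwinding of the functional equation shows that this part of $\Gamma f$ contributes box dimension at most $1$, which is precisely the role of the ``$0$'' in the $\max\{\ldots,0\}$.

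For Part~(2), the positivity hypothesis together with Theorem~\ref{thm:irr}(2) grants irreducibility of $\lowM_{r,k}|_{\Theta_{r,k}}$ for $k \geq k_r^*$, and Theorem~\ref{thm:rho-up-sub}(2) gives $\rho(\lowM_{r,k}|_{\Theta_{r,k}}) \to \uprho_r$. When some $E \in \mB_{r, k_r^*}$ has $\Var(f, E) = \infty$, irreducibility furnishes a Perron--Frobenius eigenvector with strictly positive entries, and the infinite variation forces $\sum_{E' \in \mB_{r, k+\ell},\, E' \subset E} \mathrm{osc}(f, E')$ to grow at least like $\rho(\lowM_{r,k}|_{\Theta_{r,k}})^{\ell}$. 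The reverse box-counting estimate then yields $\lowdimB \Gamma f \geq 1 + \log \rho(\lowM_{r,k}|_{\Theta_{r,k}})/\log T_r$, and letting $k \to \infty$ gives $\dim_B \Gamma f \geq d_r^*$. If instead every $d_r^* = 1$, the trivial lower bound $\dim_B \Gamma f \geq 1$ from continuity on an interval suffices. Taking the maximum over $r$ and combining with Part~(1) delivers the stated identity.

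The main obstacle I anticipate is Part~(2): transferring an \emph{a priori} infinite total variation on a single interval $E$ into \emph{uniform} lower bounds on oscillation sums at every scale $T_r^{-k-\ell}$. Irreducibility (as opposed to primitivity) is delicate here and requires a careful period analysis, together with a Perron--Frobenius eigenvector supported on all of $\Theta_{r,k}$ to turn the asymmetric single-interval hypothesis into a global estimate. A secondary technical difficulty in Part~(1) is ensuring that the additive bounded-variation error does not swamp the geometric $\rho^\ell$ term when $\rho(\upM_{r,k}|_{\Theta_{r,k}}) < 1$; the observation that $f$ is then of bounded variation on $B_{r,k}$ (so its graph has box dimension exactly~$1$ there) keeps the bound aligned with the $\max\{\ldots, 0\}$ appearing in the statement.
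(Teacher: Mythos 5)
Your Part (1) has two genuine gaps. First, the key inequality you propose, $\updimB \big(\Gamma f|_{B_{r,k}}\big) \leq 1+\log\rho(\upM_{r,k}|_{\Theta_{r,k}})/\log T_r$, is false in general: the oscillation sums over level-$(k+\ell)$ intervals inside $B_{r,k}$ are \emph{not} controlled by itineraries within $\Theta_{r,k}$ alone, because at each refinement step some subintervals $I_{r,i'}^{k+1}\subset I_{r,i}^k$ have domains $D_{r,i'}^{k+1}$ lying outside the basic set (e.g.\ inside $I_j$ with $j\notin\Lambda_r$), and the oscillation they inject is governed by the dimension of $\Gamma f$ over those external intervals, not by the matrix product. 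This is exactly the extra term $\widetilde V(f,r,k,p)$ in the paper's Theorem~\ref{thm:vsm-pass-tmp}, which leads to the bound $\updimB(\Gamma f|_{B_{r,1}})\leq\max\{1+\log\uprho_r/\log T_r,\ \uplam_{r,1}\}$ of Lemma~\ref{thm:box-dim-up-sc}; your telescoping argument only accounts for the bounded-variation error from the $q_n$, so it cannot close. Second, your treatment of the complement is wrong: the graph over intervals $I_i$ with $i$ in no strongly connected component does \emph{not} have box dimension at most $1$. In the paper's own example, $1\notin\Lambda_1\cup\Lambda_2$ yet $\dim_B(\Gamma f|_{I_1})=\dim_B(\Gamma f|_{D_1})\approx 1.535$, since $D_1$ contains intervals of the components and $S_1>0$. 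The correct mechanism (Theorem~\ref{thm:box-up-final}) is an induction on the position $P(i)$, pushing dimension bounds through the maps $L_n$ via Lemma~\ref{lem:box-pass}(1) and feeding them into $\uplam_{r,1}$; the ``$0$'' in the maximum reflects $\dim_B\geq 1$ for continuous graphs, not a dimension-one complement.

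In Part (2) your lower bound follows the paper's route (propagation of infinite variation through strong connectivity, Perron--Frobenius eigenvector, geometric growth of the oscillation-sum vector), but the matching upper bound is missing. ``Combining with Part (1)'' does not yield $\updimB\Gamma f\leq\max\{d_1^*,\ldots,d_m^*,1\}$: if some component has $\uprho_r>1$ while $\Var(f,E)<\infty$ for every $E\in\mB_{r,k_r^*}$ (so $d_r^*=1$), Part (1) still only gives the possibly strictly larger bound $1+\log\uprho_r/\log T_r$. You need the refinement of Lemma~\ref{lem:6.1}: in the finite-variation case $\dim_B(\Gamma f|_{B_{r,k_r^*}})=1$ (via Lemmas~\ref{lem:VarOinfty} and \ref{lem:box-dim-new-x1}), hence $\updimB(\Gamma f|_{B_{r,1}})\leq\max\{d_r^*,\uplam_{r,1}\}$, and then the position induction must be rerun with $d_0^*=\max\{d_1^*,\ldots,d_m^*,1\}$ in place of the Part (1) bound. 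Without this step the claimed identity is not established.
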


%Let
%$$d^* =1 + \max\Big\{ \frac{\log\uprho_1}{\log T_1},\frac{\log\uprho_2}{\log T_2},\ldots, \frac{\log \uprho_m}{ \log T_m},0 \Big\}.$$

\section{Analysis on restricted vertical scaling matrices}
\subsection{Some facts of basic sets and basic intervals}

By definitions of basic sets and basic intervals, we have the following simple facts. Since their proofs are trivial, we omit the details.

\begin{lem}\label{lem:Theta-simple-facts}
 Let $1\leq r\leq m$ and $k\geq 1$. 
 \begin{enumerate}
	  \item For any two distinct elements $i,j$ in $\{1,2,\ldots,d_r{T_r}^{k-1}\}$, we have $(I_{r,i}^k \cap I_{r,j}^k)^\circ =\emptyset$, where we use $(\cdot)^\circ$ to denote the interior of a set.
	  \item If $i\in\{1,2,\ldots,d_r{T_r}^{k-1}\}\setminus \Theta_{r,k}$, then $(I_{r,i}^k \cap B_{r,k})^\circ = \emptyset$. As a result, $(I_{r,i}^k \cap B_{r,k+1})^\circ = \emptyset$.
	  \item $\Theta_{r,k}=\big\{ 1 \leq  i \leq d_r {T_r}^{k-1}: \,I_{r,i}^k \subset B_{r,k}  \big\}$.	 In particular, $\Theta_{r,1}=\{1,2,\ldots,d_r\}$.
	  \item For any $i\in \Theta_{r,k}$ and $k'>k$, the set $\{i'\in \Theta_{r,k'}:\, I_{r,i'}^{k'}\subset I_{r,i}^k\}$ is nonempty.
	  \item For any $1\leq t\leq d_r$, $D_{r,t}^1=D_{a_{r,t}}$. Furthermore, for any $1\leq i\leq d_r{T_r}^k$, there exist $n_i\in\{1,\ldots,N\}$ and $p_i\in\{1,2,\ldots,{T_r}^{k-1}\}$ dependent of $r$ and $k$, such that
	  \begin{equation}\label{eq:Theta-simple-facts-1}
	    D_{r,i}^{k+1}=[x_{(n_i-1)}+(p_i-1)\varepsilon_{r,k},x_{(n_i-1)}+p_i\varepsilon_{r,k}],
	  \end{equation}
	 where $\varepsilon_{r,k}=(x_N-x_0)N^{-1}{T_r}^{-k+1}$. As a result, we have either $D_{r,i}^{k+1}\in \mB_{r,k}$ or $(D_{r,i}^{k+1}\cap B_{r,k})^\circ=\emptyset$.
  \end{enumerate}	
\end{lem}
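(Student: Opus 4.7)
The plan is to verify the five assertions in turn, exploiting the explicit combinatorial form of the intervals $I_{r,i}^k$ together with the recursion defining $\mB_{r,k}$. Part (1) follows by unpacking \eqref{eq:Iri-def}: for fixed $t$, varying $j$ gives a uniform partition of $I_{a_{r,t}}$ into $T_r^{k-1}$ pieces sharing only endpoints, while distinct values of $t$ place the intervals inside distinct $I_{a_{r,t}}$'s whose interiors are disjoint by (A1). For (2) and (3), the key identity is $B_{r,k}=\bigcup_{i\in\Theta_{r,k}} I_{r,i}^k$, a direct consequence of $B_{r,k}=\bigcup\mB_{r,k}$ and of the definition of $\Theta_{r,k}$. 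Combined with (1), this forces $(I_{r,i}^k\cap B_{r,k})^\circ=\emptyset$ whenever $i\notin\Theta_{r,k}$, and the statement for $B_{r,k+1}$ follows from $B_{r,k+1}\subset B_{r,k}$; conversely, if $I_{r,i}^k\subset B_{r,k}$ and $i\notin\Theta_{r,k}$, then $(I_{r,i}^k)^\circ=(I_{r,i}^k\cap B_{r,k})^\circ=\emptyset$, a contradiction. The identity $\Theta_{r,1}=\{1,\ldots,d_r\}$ is then just $\mB_{r,1}=\{I_{r,t}^1\}_{t=1}^{d_r}$.

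For (4), I would reduce to the auxiliary claim, proved by induction on $k$, that every $E\in \mB_{r,k-1}$ contains some $E'\in \mB_{r,k}$. The base case $k=1$ asks that for every $n\in\Lambda_r$ some $n'\in\Lambda_r$ satisfies $I_{n'}\subset D_n$, i.e.\ that $n$ has an incoming edge from within $\Lambda_r$; this is exactly what strong connectedness of $\Lambda_r$ provides. The inductive step is automatic: any $F=L_n(E)\in \mB_{r,k}$ with $E\in\mB_{r,k-1}$ and $E\subset D_n$ contains $L_n(E')\in \mB_{r,k+1}$ for the $E'\in\mB_{r,k}$ produced by the hypothesis, so $L_n(E')\subset F$. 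Iterating $k'-k$ times places a $k'$-th level basic interval inside the given $I_{r,i}^k$, and this subinterval corresponds to an index in $\Theta_{r,k'}$ by (3). This is the only step where strong connectivity of $\Lambda_r$ enters, and I expect it to be the (mild) main subtlety of the proof.

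Finally, for (5), the first assertion $D_{r,t}^1=D_{a_{r,t}}$ is immediate from $I_{r,t}^1=I_{a_{r,t}}=L_{a_{r,t}}(D_{a_{r,t}})$. For the general case, write $L_n$ as an affine map of slope $\pm 1/T_r$; by (A4) the domain $D_n=[x_{\ell(n)},x_{r(n)}]$ is a union of $T_r$ consecutive intervals $I_p$, each further subdivided into $T_r^{k-1}$ cells of length $\varepsilon_{r,k}$. Since $L_n^{-1}$ is affine with scaling factor $T_r$ and respects this grid, $D_{r,i}^{k+1}=L_n^{-1}(I_{r,i}^{k+1})$ is exactly one such cell, giving the stated form with $p_i\in\{1,\ldots,T_r^{k-1}\}$. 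The final dichotomy splits on whether $n_i\in\Lambda_r$: if so, $D_{r,i}^{k+1}$ equals some $I_{r,j}^k$, and (3) (when $j\in\Theta_{r,k}$) or (2) (otherwise) yields the claimed alternative; if not, then $I_{n_i}^\circ$ is disjoint from $\bigcup_{t=1}^{d_r}I_{a_{r,t}}\supset B_{r,k}$, so again $(D_{r,i}^{k+1}\cap B_{r,k})^\circ=\emptyset$.
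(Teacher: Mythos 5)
Your proposal is correct; the paper actually omits the proof of this lemma as ``trivial,'' and your verification --- reading off the grid structure of the intervals from \eqref{eq:Iri-def} together with (A1) and (A4) for parts (1), (2), (3) and (5), and the induction on levels using strong connectedness of $\Lambda_r$ (every vertex has an incoming edge within $\Lambda_r$) for part (4) --- is exactly the intended routine argument, so there is nothing to compare beyond noting that you have supplied the omitted details correctly.
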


By \eqref{eq:Iri-def}, for any $1\leq r\leq m$, $k\geq 1$ and $1\leq i\leq d_r T_r^{k-1}$,
\[
  I_{r,i}^k=\bigcup_{\ell=(i-1)T_r+1}^{iT_r} I_{r,\ell}^{k+1}.
\]
Hence,
$B_{r,k}=\bigcup_{i\in \Theta_{r,k}} I_{r,i}^k=\bigcup_{i\in \Theta_{r,k}} \bigcup_{\ell=(i-1)T_r+1}^{iT_r} I_{r,\ell}^{k+1}$.
We define
\[
  \wdt{\Theta}_{r,k}=\big\{(i-1)T_r+t: \, i\in \Theta_{r,k}, 1\leq t \leq T_r \big\}.
\]
It is clear that
\begin{equation}\label{eq:3-1}
	\wdt{\Theta}_{r,k} =\big\{ 1 \leq  \ell \leq d_r {T_r}^{k}:\, I_{r,\ell}^{k+1} \subset B_{r,k}   \big\}.
\end{equation}

Notice that $B_{r,k+1}\subset B_{r,k}$. Thus, combining \eqref{eq:3-1} with 
\begin{equation}\label{eq:3-2}
  \Theta_{r,k+1} =\big\{ 1 \leq  i \leq d_r {T_r}^{k}:\, I_{r,i}^{k+1} \subset B_{r,k+1}   \big\},
\end{equation}
we have
\begin{equation}\label{eq:wdt-relation}
	\Theta_{r,k+1}\subset \wdt{\Theta}_{r,k}.
\end{equation}

%Furthermore, for any distinct $i',i''\in\{(i-1)T_r+1,\ldots,iT_r\}$, the set $I_{r,i'}^k\cap I_{r,i''}^k$ is either a singleton or empty.

Now we prove the following simple lemma.
\begin{lem}\label{lem:equivalent-cr}
	Fix $1 \leq r \leq m$. The following statements are equivalent.
	\begin{enumerate}
		\item $ \bigcup_{n \in \Lambda_r}D_n=\bigcup_{n \in \Lambda_r} I_n$.
		\item $B_{r,1}=B_{r,2}$.
		\item $B_{r,k}=B_{r,k+1}$  for every $k \in \mathbb{Z}^+$.
		\item $K_r=B_{r,1}$.
		\item $ \Theta_{r,k+1}=\wdt{\Theta}_{r,k}$ for every $k \in \mathbb{Z}^+$.
	\end{enumerate}
\end{lem}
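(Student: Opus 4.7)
The plan is to establish the cycle $(1) \Rightarrow (3) \Rightarrow (2) \Rightarrow (1)$, together with the equivalences $(3) \Leftrightarrow (4)$ and $(3) \Leftrightarrow (5)$; combining these will yield the full chain of equivalences.

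For $(1) \Rightarrow (3)$, I would observe that the local Hutchinson-Barnsley operator $W_{\textrm{local},r}$ is monotone under set inclusion. Hence, if $B_{r,0} = B_{r,1}$, iterating $W_{\textrm{local},r}$ and using the recursion \eqref{eq:B-rk-RecRelation} would give $B_{r,k} = W_{\textrm{local},r}^k(B_{r,0}) = W_{\textrm{local},r}^k(B_{r,1}) = B_{r,k+1}$ for every $k \geq 1$. The implication $(3) \Rightarrow (2)$ is trivial, corresponding to $k = 1$.

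The central step is $(2) \Rightarrow (1)$. Assuming $B_{r,1} = B_{r,2}$, I would fix $n \in \Lambda_r$ and take any point $y$ in the interior of $I_n$. Since $y \in B_{r,1} = B_{r,2} = \bigcup_{m \in \Lambda_r} L_m(B_{r,1} \cap D_m)$ and each $L_m(B_{r,1} \cap D_m)$ lies in $I_m$, while the interiors of $I_1, \ldots, I_N$ are pairwise disjoint, the point $y$ must belong to $L_n(B_{r,1} \cap D_n)$. Hence $I_n^\circ \subset L_n(B_{r,1} \cap D_n)$; since the latter set is compact, taking closures gives $I_n \subset L_n(B_{r,1} \cap D_n)$, and together with the reverse inclusion this yields $L_n(B_{r,1} \cap D_n) = I_n$. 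Bijectivity of $L_n \colon D_n \to I_n$ then forces $B_{r,1} \cap D_n = D_n$, that is, $D_n \subset B_{r,1}$. Taking the union over $n \in \Lambda_r$ gives $B_{r,0} \subset B_{r,1}$, and combined with the trivial reverse inclusion this is (1).

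For $(3) \Leftrightarrow (4)$, I would use that $K_r = \bigcap_{k \geq 0} B_{r,k}$; if all $B_{r,k}$ with $k \geq 1$ coincide then $K_r = B_{r,1}$, and conversely, since $\{B_{r,k}\}_{k \geq 1}$ is decreasing with intersection $K_r$, the equality $K_r = B_{r,1}$ forces every $B_{r,k}$ to equal $B_{r,1}$. For $(3) \Leftrightarrow (5)$, I would combine \eqref{eq:3-1} and \eqref{eq:3-2}, which present $B_{r,k}$ and $B_{r,k+1}$ as unions of level-$(k+1)$ basic intervals indexed by $\wdt{\Theta}_{r,k}$ and $\Theta_{r,k+1}$ respectively; since these intervals have pairwise disjoint nonempty interiors, equality of the two unions as closed sets is equivalent to equality of the two index sets. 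The hard part will be the implication $(2) \Rightarrow (1)$, where one has to exploit the bijectivity of each $L_n$ together with the essential disjointness of the $I_n$'s to propagate information from a single iteration step backward to the initial union of the domains $D_n$.
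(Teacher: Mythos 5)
Your proposal is correct and follows essentially the same route as the paper: the easy implications come from monotonicity of the basic sets together with \eqref{eq:3-1}--\eqref{eq:3-2}, and the key step $(2)\Rightarrow(1)$ is, as in the paper, to localize $B_{r,2}$ inside each $I_n$ to $L_n(B_{r,1}\cap D_n)$ (using that distinct $I_m$'s meet only in endpoints) and then invoke bijectivity of $L_n$ to conclude $D_n\subset B_{r,1}$. Your interior-then-closure handling of the shared endpoints is a slightly more careful rendering of the paper's chain of equalities $I_n=I_n\cap B_{r,2}=L_n(D_n\cap B_{r,1})$, but the underlying argument is the same.
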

\begin{proof}
	By definition, (1) is equivalent to $B_{r,0}=B_{r,1}$. Notice that $\{B_{r,k}\}_{k\geq 0}$ is decreasing with respect to $k$. Hence, from \eqref{eq:B-rk-RecRelation} and $K_r=\bigcap_{k=1}^\infty B_{r,k}$, it is clear that
	\[(1) \Rightarrow (2) \Leftrightarrow (3)\Leftrightarrow (4).\]
	Meanwhile, it follows from \eqref{eq:3-1} and \eqref{eq:3-2} that $(3)\Leftrightarrow (5)$.

	%\red{
		%	From the proof of Lemma~\ref{lem:b-k-s-sub}, we have $\wdt{\Theta}_{r,k}=\big\{ 1 \leq  i \leq d_r {T_r}^{k}:\, I_{r,i}^{k+1} \subset \cup\mB_{r,k} %\big\}$.} Thus, 
	%	we have  $(3)\Leftrightarrow (5)$.

	From above arguments, it suffices to prove that $(2) \Rightarrow (1)$. Assume that (2) holds. Then for any $n \in \Lambda_r$, 
	we have $I_n \cap B_{r,1}=I_n= L_n(D_n)$ and 
	\[
	I_n = I_n \cap B_{r,2}
	= L_n ( D_n\cap  B_{r,1}  ) 
	= L_n\Big( \bigcup_{j\in \Lambda_r: I_j\subset D_n} I_j  \Big).
	\]
	Notice that $L_n$ is a bijection.
	Thus, for each $n\in \Lambda_r$, 
	\[D_n=\bigcup_{j\in \Lambda_r: I_j\subset D_n} I_j\subset \bigcup_{j\in \Lambda_r} I_j,\] 
	and therefore $\bigcup_{n\in \Lambda_r} D_n\subset \bigcup_{j\in \Lambda_r} I_j $. On the other hand, $\bigcup_{j\in \Lambda_r} I_j\subset \bigcup_{n\in \Lambda_r} D_n$ always holds since $\Lambda_r$ is a strongly connected component of $\{1,2,\ldots,N\}$. Thus (1) holds.	
\end{proof}

\subsection{The irreducibility of restricted vertical scaling matrices}
We recall some notations and definitions in matrix analysis \cite{HJ13}.
Given a matrix $A=(a_{ij})_{n\times n}$, we say $A$ is \emph{nonnegative}, denoted by $A\geq 0$ , if $a_{ij}\geq0$ for all $i$ and $j$.
 Similarly, given $\bfu=(u_1,\ldots,u_n),\bfv=(v_1,\ldots,v_n)\in \bR^n$, we write $\bfu \geq \bfv$ if $u_i\geq v_i$ for all $i$. We say $\bfu$ is positive, denoted by $\bfu>0$, if $u_{i}>0$ for all $i$.

 A nonnegative matrix $A=(a_{ij})_{n\times n}$ is called \emph{irreducible} if for any $i,j\in \{1,\ldots,n\}$, there exists a finite sequences $i_0,\ldots,i_t\in\{1,\ldots,n\}$ such that $i_0=i,i_t=j$, and $a_{i_{\ell-1},i_{\ell}}>0$ for all $1\leq \ell\leq t-1$. Please see \cite{HJ13} for details.

\begin{lem}\label{lem3.3}
 Let $1\leq r\leq m$ and $k\in \Z^+$. Then for any $i,j\in \Theta_{r,k}$, there exists a finite sequence $\{i_\ell\}_{\ell=0}^t$ in $\Theta_{r,k}$ such that $i_0=j$, $i_t=i$ and $I^k_{r,i_{\ell-1}}\subset D_{r,i_\ell}^k$ for all $1\leq \ell \leq t$.
\end{lem}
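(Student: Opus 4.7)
My plan is by induction on $k$, with the inductive step relying on two natural ``ancestor'' maps from $\Theta_{r,k+1}$ to $\Theta_{r,k}$. For the base case $k=1$, $\Theta_{r,1}=\{1,\ldots,d_r\}$, $I_{r,t}^1=I_{a_{r,t}}$ and $D_{r,t}^1=D_{a_{r,t}}$, so the required edge condition $I_{r,i_{\ell-1}}^1\subset D_{r,i_\ell}^1$ is identical to the original edge $I_{a_{r,i_{\ell-1}}}\subset D_{a_{r,i_\ell}}$ in $\Lambda_r$, and the lemma at level $1$ follows by relabeling a path from $a_{r,j}$ to $a_{r,i}$ in $\Lambda_r$, which exists by strong connectedness of $\Lambda_r$.

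For the inductive step, assume the lemma holds at level $k$. For each $u\in\Theta_{r,k+1}$ I define $u^\sharp\in\Theta_{r,k}$ as the unique index with $I_{r,u^\sharp}^{k}=D_{r,u}^{k+1}$, which is well defined because $D_{r,u}^{k+1}\subset B_{r,k}$ together with Lemma~\ref{lem:Theta-simple-facts}(5) forces $D_{r,u}^{k+1}\in\mB_{r,k}$. I also define $u^\flat\in\Theta_{r,k}$ as the unique index with $I_{r,u}^{k+1}\subset I_{r,u^\flat}^{k}$, well defined by Lemma~\ref{lem:Theta-simple-facts}(1)--(2). Two facts then have to be checked: (a) the edge condition $I_{r,j}^{k+1}\subset D_{r,i}^{k+1}$ is equivalent to $j^\flat=i^\sharp$; and (b) for every $u$ one has $I_{r,u^\sharp}^{k}\subset D_{r,u^\flat}^{k}$, i.e., $u^\sharp\to u^\flat$ is automatically an edge at level $k$. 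Fact~(b) is obtained by pulling back the inclusion $I_{r,u}^{k+1}\subset I_{r,u^\flat}^{k}$ under the appropriate first-level map $L_n$.

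Given $j,i\in\Theta_{r,k+1}$, the induction hypothesis supplies a path $p_0=j^\flat,p_1,\ldots,p_{s-1}=i^\sharp$ in $\Theta_{r,k}$, and fact~(b) lets me append the edge $i^\sharp\to i^\flat$ to reach $p_s=i^\flat$. I then lift this to $r_0=j,r_1,\ldots,r_s=i$ in $\Theta_{r,k+1}$ by taking $r_\sigma$ ($\sigma\geq 1$) to be the unique index with $r_\sigma^\sharp=p_{\sigma-1}$ and $r_\sigma^\flat=p_\sigma$; explicitly, $I_{r,r_\sigma}^{k+1}=L_n(I_{r,p_{\sigma-1}}^{k})$ with $L_n$ the first-level map dictated by $p_\sigma$. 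Consecutive lifted indices satisfy $r_{\sigma-1}^\flat=p_{\sigma-1}=r_\sigma^\sharp$ and therefore form edges in $\Theta_{r,k+1}$ by fact~(a); $r_s=i$ because any element of $\Theta_{r,k+1}$ is determined by its $(\sharp,\flat)$-ancestors.

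I expect the main technical obstacle to be justifying the lift: showing that $L_n(I_{r,p_{\sigma-1}}^{k})$ genuinely lies in $\mB_{r,k+1}$ and inside $I_{r,p_\sigma}^{k}$. This boils down to unpacking the $\Theta_{r,k}$-edge $p_{\sigma-1}^\flat=p_\sigma^\sharp$ as the chain $I_{r,p_{\sigma-1}}^{k}\subset I_{r,p_{\sigma-1}^\flat}^{k-1}=D_{r,p_\sigma}^{k}=L_n^{-1}(I_{r,p_\sigma}^{k})$, where condition~(A4) and Lemma~\ref{lem:Theta-simple-facts}(5) ensure the length scales and ancestries are consistent.
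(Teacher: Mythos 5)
Your proposal is correct and takes essentially the same route as the paper's proof: induction on $k$, with the base case coming from strong connectedness of $\Lambda_r$, and the inductive step projecting $j\mapsto j^\flat$ and $i\mapsto i^\sharp$ down to level $k$, applying the hypothesis there, and lifting the resulting path through the maps $L_n$ — your facts (a), (b) and the lifting step are exactly the paper's relations $I^{p+1}_{r,\ell'_{t-1}}\subset I^{p}_{r,\ell_{t-1}}=D^{p+1}_{r,\ell'_{t}}$, and the technical point you flag (that $L_n(I^k_{r,p_{\sigma-1}})$ lies in $\mB_{r,k+1}$ and inside $I^k_{r,p_\sigma}$) is justified exactly as you indicate. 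The only cosmetic difference is at the terminal vertex: the paper simply declares the last entry of the lifted path to be $i$ itself, while you append the extra level-$k$ edge $i^\sharp\to i^\flat$ and then identify its lift with $i$ via uniqueness of the element with prescribed $(\sharp,\flat)$-ancestors; both are valid.
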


 \begin{proof}
 	We prove the lemma by induction on $k$. Fix $1\leq r\leq m$. 
 	
 	First we assume that $k=1$. Notice that $\Lambda_r=\{a_{r,1}, \ldots,a_{r,d_r}\}$ 
 	and $\Theta_{r,1}=\{1,\ldots,d_r\}$. From \eqref{eq:Irt-One} and Lemma~\ref{lem:Theta-simple-facts}(5), for all $1\leq i\leq d_r$, we have $I_{r,i}^1=I_{a_{r,i}}$ and $D_{r,i}^1=D_{a_{r,i}}$. Combining this with the fact that $\Lambda_r=\{a_{r,1}, \ldots, a_{r,d_r}\}$ is strong connected in $\{1,\ldots,N\}$, we know the statement holds for $k=1$.
 	
 	Assume that the statement holds for $k=p$, where $p$ is a positive integer. Arbitrarily pick $i',j'\in \Theta_{r,p+1}$. Let $j$ be the element in $\{1,\ldots,d_r {T_r}^{p-1}\}$ satisfying $I_{r,j'}^{p+1} \subset I_{r,j}^p$. Then $I_{r,j}^p\cap B_{r,p+1}\supset I_{r,j'}^{p+1}$. Thus from Lemma~\ref{lem:Theta-simple-facts}(2), $j \in \Theta_{r,p}$.
 	From $i'\in \Theta_{r,p+1}$, there exists $n_0\in \Lambda_r$ and $E\in \mB_{r,p}$ with $E\subset D_{n_0}$, such that $I_{r,i'}^{p+1}=L_{n_0}(E)$. From $E\in \mB_{r,p}$, there exists $1\leq i\leq d_r {T_r}^{p-1}$ such that $E=I_{r,i}^p$. Thus, $I_{r,i'}^{p+1}=L_{n_0}(I_{r,i}^p)$ so that
 	$I_{r,i}^p=D_{r,i'}^{p+1}$. Noting that from $I_{r,i}^p=E\in \mB_{r,p}$, we have $i\in \Theta_{r,p}$. 	
 	By the inductive assumption, there exists a finite sequence $\{\ell_t\}_{t=0}^{q}$ such that
 	$\ell_0=j$, $\ell_{q}=i$ and $I_{r,\ell_{t-1}}^p \subset D_{r,\ell_t}^p$ for all $1\leq t\leq q$.
 	
 	For each $1\leq t\leq q$, let $n_t$ be the unique element in $\Lambda_r$ such that $I_{r,\ell_t}^p\subset I_{n_t}$. It follows from $I_{r,\ell_{t-1}}^p\subset D_{r,\ell_t}^p$ that there exists $\ell_t'\in\{(\ell_t-1)T_r+1,\ldots,\ell_t T_r\}$ such that $L_{n_t}(I_{r,\ell_{t-1}}^p)=I_{r,\ell_t'}^{p+1}$. 
	 Thus $I_{r,\ell_{t-1}}^p=D_{r,\ell_t'}^{p+1}$ and $I_{r,\ell_t'}^{p+1}\subset I_{r,\ell_t}^p$ for all $1\leq t\leq q$. In other words, $I_{r,\ell_{t-1}'}^{p+1}\subset I_{r,\ell_{t-1}}^p$ for all $2\leq t\leq q+1$. 
 	As a result, for all $2\leq t\leq q$,
 	\begin{equation}\label{eq:lem-3-1-1}
 		I_{r,\ell_{t-1}'}^{p+1} \subset I_{r,\ell_{t-1}}^p=D_{r,\ell_t'}^{p+1}.
 	\end{equation}	
 	Define $\ell_0'=j'$ and $\ell_{q+1}'=i'$. From $\ell_0=j$, $\ell_q=i$ and definitions of $i$, $j$, \eqref{eq:lem-3-1-1} still holds for $t=1$ and $t=q+1$. Thus the statement holds for $k=p+1$. 	
 \end{proof}

 \begin{proof}[Proof of Theorem~\ref{thm:irr}]
 	Fix $1\leq r\leq m$. First we prove (1). Fix $k\geq 1$.	
 	From Lemma~\ref{lem3.3}, for any $i,j\in \Theta_{r,k}$, there exists a finite sequence $\{i_\ell\}_{\ell=0}^t$ in $\Theta_{r,k}$ such that $i_0=j$, $i_t=i$ and 
 	$I^k_{r,i_{\ell-1}}\subset D_{r,i_\ell}^k$ for all $1\leq \ell \leq t$. Hence, by using the assumption that $S_n$ is positive on $D_n$ for all $n\in \Lambda_r$, we know from definitions of vertical scaling matrices that
 	\begin{equation}\label{eq:thm-irr-1}
 	  \ups_{r,i_\ell,i_{\ell-1}}^{k} \geq \lows_{r,i_\ell,i_{\ell-1}}^{k}>0, \quad 1\leq q\leq t.
 	\end{equation}
    As a result, both $\lowM_{r,k}|_{\Theta_{r,k}}$ and $\upM_{r,k}|_{\Theta_{r,k}}$ are irreducible. 

 	Now we prove (2). Assume that $S_n$ is positive on $D_n\cap K_r$ for all $n\in\Lambda_r$. Then from the fact that $S_n$ is continuous on $D_n$ and $K_r=\bigcap_{k=0}^\infty B_{r,k}$ is the intersection of decreasing compact sets, there exists $k_1\in \bZp$ such that $S_n$ is positive on $D_n\cap B_{r,k}$ for all $n\in\Lambda_r$ and $k\geq k_1$. 
	 Fix $k\geq k_1+1$.	
 	From Lemma~\ref{lem3.3}, for any $i,j\in \Theta_{r,k}$, there exists a finite sequence $\{i_\ell\}_{\ell=0}^t$ in $\Theta_{r,k}$ such that $i_0=j$, $i_t=i$ and 
 	$I^k_{r,i_{\ell-1}}\subset D_{r,i_\ell}^k$ for all $1\leq \ell \leq t$. From Lemma~\ref{lem:Theta-simple-facts}(5), $D_{r,i_\ell}^k\subset B_{r,k-1}$ for all $1\leq \ell \leq t$. Hence, \eqref{eq:thm-irr-1} still holds since $k\geq k_1+1$. Thus both $\lowM_{r,k}|_{\Theta_{r,k}}$ and $\upM_{r,k}|_{\Theta_{r,k}}$ are irreducible.
	 By letting $k_0=k_1+1$, (2) holds. 
\end{proof}

\subsection{Convergence of spectral radii of restricted vertical scaling matrices}

The following four lemmas are well known. Please see \cite[Chapter 8]{HJ13} for details.

\begin{lem}\label{lem:psm}
Let $A$ be a nonnegative matrix. 
If $A'$ is a principal submatrix of $A$, then $\rho(A') \leq \rho(A)$.

\end{lem}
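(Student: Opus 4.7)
The plan is to use the classical embedding trick. Suppose $A$ is $n\times n$ and $A'=A|_\Theta$ for some nonempty $\Theta\subset\{1,\ldots,n\}$. I would first introduce the $n\times n$ matrix $B$ defined by $B_{ij}=A_{ij}$ when $(i,j)\in \Theta\times \Theta$ and $B_{ij}=0$ otherwise. The proof then splits into two observations about $B$, each of which reduces the problem to a standard fact.

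First, I would note that $\rho(B)=\rho(A')$. Indeed, by choosing a permutation $P$ that orders the indices of $\Theta$ before those of $\{1,\ldots,n\}\setminus \Theta$, we get that $P B P^{-1}$ has the block form $\bigl(\begin{smallmatrix} A' & 0\\ 0 & 0\end{smallmatrix}\bigr)$, so its eigenvalues are those of $A'$ together with $0$'s. Hence $\rho(B)=\rho(A')$.

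Second, I would show $\rho(B)\leq \rho(A)$ using Gelfand's spectral radius formula. Since $A\geq 0$ is obtained from $B\geq 0$ by possibly adding nonnegative entries, we have $0\leq B\leq A$ entrywise. A straightforward induction then gives $0\leq B^k\leq A^k$ entrywise for every $k\geq 1$, since products and sums of nonnegative matrices preserve entrywise inequalities. Consequently, $\|B^k\|\leq \|A^k\|$ in any monotone matrix norm (for instance the max-entry norm or the operator $\infty$-norm). Taking $k$-th roots and letting $k\to\infty$, Gelfand's formula $\rho(M)=\lim_k\|M^k\|^{1/k}$ yields $\rho(B)\leq \rho(A)$.

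Combining the two observations gives $\rho(A')=\rho(B)\leq \rho(A)$, which is the claim. There is essentially no serious obstacle here; this is a textbook consequence of nonnegativity, and the only delicate point is remembering to invoke a monotone matrix norm so that entrywise domination transfers to norm domination. Given that the paper already cites \cite[Chapter 8]{HJ13}, the proof could even be left as a direct reference, but the above argument is short enough to include explicitly.
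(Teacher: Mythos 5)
Your proof is correct. Note that the paper does not prove this lemma at all: it is listed among "the following four lemmas are well known" with a pointer to \cite[Chapter 8]{HJ13}, so there is no in-paper argument to compare against. Your zero-padding construction together with entrywise monotonicity ($0\leq B\leq A$ implies $0\leq B^k\leq A^k$, hence $\rho(B)\leq\rho(A)$ via Gelfand's formula) is exactly the standard textbook route; in effect you have re-derived the paper's Lemma~\ref{lem:psm} as a consequence of the comparison principle that the paper records separately as Lemma~\ref{lem:SptlRadIncr-Nonngtv}, which is a perfectly legitimate and complete argument.
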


\begin{lem}\label{lem:SptlRadIncr-Nonngtv}
	Let $A$ and $A'$ be two nonnegative matrices with $A\leq A'$. Then $\rho(A)\leq \rho(A')$.
\end{lem}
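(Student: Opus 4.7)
The plan is to establish this standard comparison principle via Gelfand's formula for the spectral radius. Recall that for any square matrix $B$ and any submultiplicative matrix norm $\|\cdot\|$, one has
\[
  \rho(B)=\lim_{k\to\infty}\|B^k\|^{1/k}.
\]
I would apply this with a norm that is monotone on the cone of nonnegative matrices, for instance the maximum absolute row-sum norm $\|B\|_\infty=\max_i\sum_j|b_{ij}|$, which for $B\geq 0$ is clearly nondecreasing in the individual entries of $B$.

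The crucial ingredient is that entrywise monotonicity is preserved under taking powers: if $0\leq A\leq A'$ entrywise, then $0\leq A^k\leq (A')^k$ entrywise for every $k\geq 1$. I would verify this by induction on $k$, the base case being the hypothesis. For the inductive step it suffices to note the more general fact that whenever $0\leq B\leq B'$ and $0\leq C\leq C'$ entrywise, each entry of $BC$ is a sum of products of nonnegative numbers that are dominated termwise by the corresponding products in $B'C'$, and therefore $0\leq BC\leq B'C'$.

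Given these two ingredients, the argument closes in one line: monotonicity of $\|\cdot\|_\infty$ on nonnegative matrices gives $\|A^k\|_\infty\leq \|(A')^k\|_\infty$ for every $k$, and taking $k$-th roots and letting $k\to\infty$ in Gelfand's formula yields $\rho(A)\leq \rho(A')$. There is essentially no technical obstacle; the only point to confirm is the monotonicity of the chosen norm on the nonnegative cone, which is manifest for the row-sum, column-sum, or total-entry-sum norms. An alternative route is via the Perron--Frobenius / Collatz--Wielandt variational characterization of $\rho$, but that requires either restricting to irreducible matrices or passing through a perturbation $A+\varepsilon J$ (with $J$ the all-ones matrix) and letting $\varepsilon\to 0^+$, so Gelfand's formula provides the cleanest self-contained derivation applicable to arbitrary nonnegative matrices as stated in the lemma.
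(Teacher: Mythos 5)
Your argument is correct and complete: entrywise monotonicity of powers of nonnegative matrices, monotonicity of $\|\cdot\|_\infty$ on the nonnegative cone, and Gelfand's formula together give $\rho(A)\leq\rho(A')$. The paper itself offers no proof, citing this as a well-known fact from Horn and Johnson (Chapter 8), and your derivation is exactly the standard argument given there, so there is nothing to add.
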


\begin{lem}[Perron-Frobenius Theorem]\label{th:PF}
Let $A=(a_{ij})_{n\times n}$ be an irreducible nonnegative matrix. Then $\rho(A)$ is a positive eigenvalue of $A$ and has a positive eigenvector.
\end{lem}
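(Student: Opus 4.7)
The plan is to establish Perron--Frobenius by combining Brouwer's fixed point theorem with the combinatorial strength of irreducibility. The paper cites this result from \cite{HJ13}, but a clean self-contained sketch proceeds through a fixed-point argument on the standard simplex.

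First, I would observe that an irreducible nonnegative $n\times n$ matrix $A$ has no zero row: if row $i$ vanished, no index $j\neq i$ could reach $i$ along a chain of positive entries, contradicting irreducibility. Consequently, for every point $x$ of the closed simplex $\Delta=\{x\in\mathbb{R}^n:\,x\ge 0,\ \sum_k x_k=1\}$ we have $Ax\ne 0$, so the continuous map
\[
  \phi(x)=\frac{Ax}{\sum_k (Ax)_k}
\]
is a well-defined self-map $\phi:\Delta\to\Delta$. Brouwer's theorem yields $x^{*}\in\Delta$ with $Ax^{*}=\lambda x^{*}$, where $\lambda=\sum_k (Ax^{*})_k$; since $Ax^{*}\ne 0$, this $\lambda$ is strictly positive.

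Next, I would upgrade $x^{*}\ge 0$ to $x^{*}>0$ using irreducibility. Set $S=\{i:x^{*}_i>0\}$ and let $S^{c}$ be its complement. For $i\in S^{c}$, the identity $(Ax^{*})_i=\lambda x^{*}_i=0$ forces $a_{ij}=0$ for every $j\in S$; hence no edge of the directed graph of $A$ runs from $S$ into $S^{c}$. If $S^{c}\ne\emptyset$, choosing $i\in S^{c}$ and $j\in S$, irreducibility would supply such an edge along some positive chain, a contradiction. Therefore $x^{*}>0$, giving a positive eigenvector for the eigenvalue $\lambda$.

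Finally, I would identify $\lambda$ with $\rho(A)$. Applying the same fixed-point construction to $A^{\top}$, which is also irreducible, produces a strictly positive left eigenvector $u^{*}$ for some eigenvalue $\lambda'>0$. Pairing $Ax^{*}=\lambda x^{*}$ with $u^{*}$ yields $\lambda\,u^{*\top}x^{*}=\lambda'\,u^{*\top}x^{*}$, and since $u^{*\top}x^{*}>0$ we obtain $\lambda=\lambda'$. Now for any eigenpair $(\mu,y)$ of $A$, coordinatewise absolute values give $A|y|\ge|\mu|\,|y|$; pairing with $u^{*}$ gives $\lambda\,u^{*\top}|y|\ge|\mu|\,u^{*\top}|y|$, and $u^{*\top}|y|>0$ forces $|\mu|\le\lambda$. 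Hence $\lambda=\rho(A)$, as required. The main obstacle is precisely this last identification step: the fixed-point argument only produces \emph{some} positive eigenvalue with a positive eigenvector, and one must still prove it coincides with the spectral radius. The crucial device is the companion construction of a strictly positive left eigenvector $u^{*}$, which converts the coordinatewise inequality $A|y|\ge|\mu|\,|y|$ into the scalar estimate $|\mu|\le\lambda$; strict positivity of both $x^{*}$ and $u^{*}$, forced by irreducibility, is exactly what makes this pairing argument rigorous.
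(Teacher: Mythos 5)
The paper gives no proof of this lemma at all: it is quoted as a classical fact with a pointer to Horn and Johnson \cite{HJ13}, Chapter 8, where the standard treatment goes through the Collatz--Wielandt variational characterization (equivalently, positivity of $(I+A)^{n-1}$ and reduction to the strictly positive case), not through topology. Your Brouwer fixed-point route is a legitimate, self-contained alternative, and its three stages --- producing a nonnegative eigenpair from a fixed point of $\phi$ on the simplex, upgrading $x^{*}\ge 0$ to $x^{*}>0$ by the cut argument with irreducibility, and identifying $\lambda=\rho(A)$ by pairing the coordinatewise inequality $A|y|\ge|\mu|\,|y|$ against a strictly positive left eigenvector obtained from the (also irreducible) matrix $A^{\top}$ --- are all sound. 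One small repair is needed: well-definedness of $\phi$ on $\Delta$ requires that $A$ have no zero \emph{column} (take $x=e_j$, so that $Ax$ is the $j$-th column of $A$), not merely no zero row, and your reachability justification of ``no zero row'' has the direction of the paths reversed; both properties do follow from irreducibility by the same one-line argument, so the fix is immediate. With that adjustment the proposal is a complete and correct proof, more elementary in its matrix-analytic prerequisites (at the price of invoking Brouwer's theorem) than the textbook argument the paper cites.
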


\begin{lem}\label{lem:PFN}
Let $A$ be a nonnegative matrix. 
Then $\rho(A)$ is an eigenvalue of $A$ and there
is a nonnegative nonzero vector $\bfx$ such that $A\bfx  = \rho(A)\bfx$.

\end{lem}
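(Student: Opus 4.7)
The plan is to reduce the general nonnegative case to the irreducible case already covered by Lemma~\ref{th:PF} via a standard perturbation argument. First, I would introduce the perturbed matrix $A_\ep = A + \ep E$, where $E$ is the $n\times n$ matrix all of whose entries equal $1$ and $\ep>0$. Every entry of $A_\ep$ is then strictly positive, so $A_\ep$ is trivially irreducible (for any $i,j$ the one-step path $i\to j$ works since $(A_\ep)_{ij}>0$). Applying Lemma~\ref{th:PF} to $A_\ep$ yields a strictly positive eigenvector $\bfx_\ep$ with $A_\ep\bfx_\ep=\rho(A_\ep)\bfx_\ep$, and I would rescale so that $\sum_{i=1}^n (\bfx_\ep)_i=1$, forcing $\bfx_\ep$ to lie in the standard simplex, which is compact.

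Next I would extract a sequence $\ep_k\downarrow 0$ along which $\bfx_{\ep_k}\to \bfx$ in the simplex. Then $\bfx\geq 0$ and, since $\sum_i x_i=1$, necessarily $\bfx\neq 0$. For the scalar part I would invoke the fact that $\rho$ is a continuous function of the matrix entries: the multiset of eigenvalues depends continuously on the coefficients of the characteristic polynomial, and the maximum modulus is therefore continuous. This gives $\rho(A_{\ep_k})\to \rho(A)$. Passing to the limit in $A_{\ep_k}\bfx_{\ep_k}=\rho(A_{\ep_k})\bfx_{\ep_k}$ yields $A\bfx=\rho(A)\bfx$, completing the proof.

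The main subtlety is the convergence $\rho(A_{\ep_k})\to \rho(A)$, because individual eigenvalues of a matrix need not depend continuously on the entries; one needs to argue at the level of the full spectrum (or via upper semicontinuity combined with monotonicity). As a sanity check, Lemma~\ref{lem:SptlRadIncr-Nonngtv} applied to $A\leq A_\ep$ gives the one-sided bound $\rho(A)\leq \rho(A_\ep)$, so only the reverse bound $\limsup_k\rho(A_{\ep_k})\leq\rho(A)$ requires real work, and this is exactly where continuity of $\rho$ is used. If one prefers to avoid this analytical input, an alternative route is to simultaneously permute $A$ into block upper-triangular form whose diagonal blocks are irreducible (the Frobenius normal form), select a diagonal block $A_{rr}$ realizing $\rho(A)=\rho(A_{rr})$, apply Lemma~\ref{th:PF} to obtain a positive eigenvector of $A_{rr}$, and extend by zeros along the remaining coordinates to produce a nonnegative nonzero eigenvector of $A$ with eigenvalue $\rho(A)$. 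Either approach works; I would present the perturbation proof since it is shorter.
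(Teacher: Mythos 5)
The paper gives no proof of Lemma~\ref{lem:PFN} at all --- it is quoted as a standard fact with a pointer to \cite[Chapter 8]{HJ13} --- so your perturbation argument is a correct, self-contained substitute rather than a variant of an argument in the paper. The main proof is sound: $A_\ep=A+\ep E$ is entrywise positive, hence irreducible, Lemma~\ref{th:PF} gives a positive eigenvector which you normalize into the compact simplex, and a subsequential limit $\bfx\geq 0$, $\bfx\neq 0$, satisfies the limiting eigenvalue equation. For the scalar limit you can even avoid the appeal to continuity of the spectral radius, which you rightly flag as the delicate point: by Lemma~\ref{lem:SptlRadIncr-Nonngtv}, $\rho(A_{\ep})$ is monotone in $\ep$ and $\rho(A_\ep)\geq\rho(A)$, so $\lambda=\lim_k\rho(A_{\ep_k})$ exists with $\lambda\geq\rho(A)$; passing to the limit gives $A\bfx=\lambda\bfx$ with $\bfx$ nonnegative and nonzero, so $\lambda$ is an eigenvalue of $A$ and hence $\lambda\leq\rho(A)$, forcing $\lambda=\rho(A)$. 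One caution about the alternative you sketch at the end: in the Frobenius normal form, extending the Perron vector of a diagonal block $A_{rr}$ by zeros is in general \emph{not} an eigenvector of $A$; it is one only when no other class has access to class $r$. For instance, for $A=\left(\begin{smallmatrix}1&1\\0&2\end{smallmatrix}\right)$ the block attaining $\rho(A)=2$ is the second one, the zero-extension $(0,1)^T$ is not an eigenvector, and the nonnegative eigenvector is $(1,1)^T$, supported on both classes. So that route would require the more careful standard construction; since you explicitly present the perturbation proof as your argument, the proposal stands as correct.
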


%In order to prove Theorem~\ref{thm:rho-up-sub}, we introduce following matrices. 

For each $1\leq r\leq m$ and $k \geq 1$, we introduce a $d_r {T_r}^k \times d_r {T_r}^k$ matrix $\upM_{r,k}^*$ as follows. Given $1\leq i ,j \leq d_r {T_r}^{k}$, let $n$ be the unique element in $\Lambda_r$ such that $I_{r,i}^{k+1} \subset I_n$. We define
\begin{equation*}
	\big(\upM_{r,k}^*\big)_{i,j}= 
	\begin{cases}
		\max \big\{ \big| S_n(x) \big|: x \in D_{r,i}^{k+1}\big\} ,\quad & \mbox{if } I_{r,j}^{k+1}\subset D_{r,i}^{k+1}, \\
		0, \quad & \mbox{otherwise}.
	\end{cases}
\end{equation*}

	\begin{lem}\label{lem:new-eq}
	For any $1 \leq r \leq m$ and $k \geq 1$, we have
		$$
		\rho(\upM_{r,k}^*|_{\wdt{\Theta}_{r,k}}) = \rho(\upM_{r,k}^*|_{\Theta_{r,k+1}}).	
		$$
	\end{lem}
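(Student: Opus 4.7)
The plan is to establish one inequality from general principles and the other via a structural observation: rows of $\upM_{r,k}^*|_{\wdt{\Theta}_{r,k}}$ indexed by $\wdt{\Theta}_{r,k}\setminus\Theta_{r,k+1}$ are identically zero, which makes the whole matrix block upper triangular with $\upM_{r,k}^*|_{\Theta_{r,k+1}}$ as the nontrivial block.

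First, by \eqref{eq:wdt-relation} we have $\Theta_{r,k+1}\subset\wdt{\Theta}_{r,k}$, so $\upM_{r,k}^*|_{\Theta_{r,k+1}}$ is a principal submatrix of $\upM_{r,k}^*|_{\wdt{\Theta}_{r,k}}$ and Lemma~\ref{lem:psm} gives $\rho(\upM_{r,k}^*|_{\Theta_{r,k+1}})\leq\rho(\upM_{r,k}^*|_{\wdt{\Theta}_{r,k}})$. That direction is free.

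The main step is the reverse inequality. I would fix $\ell\in\wdt{\Theta}_{r,k}\setminus\Theta_{r,k+1}$ and show that the $\ell$-th row of $\upM_{r,k}^*|_{\wdt{\Theta}_{r,k}}$ vanishes. Let $n\in\Lambda_r$ be the unique index with $I_{r,\ell}^{k+1}\subset I_n$, so $D_{r,\ell}^{k+1}=L_n^{-1}(I_{r,\ell}^{k+1})$. If $D_{r,\ell}^{k+1}$ were in $\mB_{r,k}$, then $I_{r,\ell}^{k+1}=L_n(D_{r,\ell}^{k+1})$ would lie in $\mB_{r,k+1}$, forcing $\ell\in\Theta_{r,k+1}$. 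Hence by Lemma~\ref{lem:Theta-simple-facts}(5), $(D_{r,\ell}^{k+1}\cap B_{r,k})^\circ=\emptyset$. Now take any $j\in\wdt{\Theta}_{r,k}$ with $I_{r,j}^{k+1}\subset D_{r,\ell}^{k+1}$; by the definition of $\wdt{\Theta}_{r,k}$ in \eqref{eq:3-1}, $I_{r,j}^{k+1}\subset B_{r,k}$, so $I_{r,j}^{k+1}\subset D_{r,\ell}^{k+1}\cap B_{r,k}$, contradicting the empty interior above since $I_{r,j}^{k+1}$ is a nondegenerate interval. Thus no such $j\in\wdt{\Theta}_{r,k}$ exists, and $(\upM_{r,k}^*|_{\wdt{\Theta}_{r,k}})_{\ell,j}=0$ for every $j\in\wdt{\Theta}_{r,k}$.

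After permuting the index set $\wdt{\Theta}_{r,k}$ so that $\Theta_{r,k+1}$ comes first, the matrix $\upM_{r,k}^*|_{\wdt{\Theta}_{r,k}}$ takes the block form
\[
\begin{pmatrix} \upM_{r,k}^*|_{\Theta_{r,k+1}} & B \\ 0 & 0 \end{pmatrix},
\]
whose spectrum is the union of the spectrum of $\upM_{r,k}^*|_{\Theta_{r,k+1}}$ and $\{0\}$. Therefore $\rho(\upM_{r,k}^*|_{\wdt{\Theta}_{r,k}})=\rho(\upM_{r,k}^*|_{\Theta_{r,k+1}})$, which is the claim. The only subtle point is verifying that indices $\ell\notin\Theta_{r,k+1}$ really give zero rows — the rest is just a permutation argument and needs no Perron--Frobenius machinery.
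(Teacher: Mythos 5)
Your proof is correct. The heart of it --- showing that for every $\ell\in\wdt{\Theta}_{r,k}\setminus\Theta_{r,k+1}$ one has $D_{r,\ell}^{k+1}\notin\mB_{r,k}$, hence $(D_{r,\ell}^{k+1}\cap B_{r,k})^\circ=\emptyset$ by Lemma~\ref{lem:Theta-simple-facts}(5), hence no $I_{r,j}^{k+1}$ with $j\in\wdt{\Theta}_{r,k}$ fits inside $D_{r,\ell}^{k+1}$, so the $\ell$-th row of $\upM_{r,k}^*|_{\wdt{\Theta}_{r,k}}$ vanishes --- is exactly the structural observation the paper makes (your route via \eqref{eq:3-1} and nondegeneracy of $I_{r,j}^{k+1}$ is a slight shortcut past the paper's intermediate index $j'$, but it is the same fact). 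Where you genuinely diverge is in how you convert the zero rows into the spectral radius identity: the paper invokes Lemma~\ref{lem:PFN} to get a nonnegative eigenvector for $\lambda=\rho(\upM_{r,k}^*|_{\wdt{\Theta}_{r,k}})$ (with a WLOG reduction to $\lambda>0$), uses the zero rows to kill its coordinates off $\Theta_{r,k+1}$, and restricts it to an eigenvector of the smaller matrix; you instead permute the indices to exhibit $\upM_{r,k}^*|_{\wdt{\Theta}_{r,k}}$ as a block upper triangular matrix with diagonal blocks $\upM_{r,k}^*|_{\Theta_{r,k+1}}$ and $0$, so the spectra agree up to extra zero eigenvalues and the radii coincide outright. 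Your finish is more elementary and a bit stronger in form: it needs no nonnegativity or Perron--Frobenius-type input, handles $\lambda=0$ with no case split, and delivers the equality in one stroke (making the Lemma~\ref{lem:psm} direction redundant, though harmless to include). The paper's eigenvector-restriction technique, on the other hand, is the template it reuses in Lemma~\ref{lem:3-rho-key-lem-1}, where no block-triangular structure is available, which is presumably why it is phrased that way here as well.
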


\begin{proof}
	
	Notice that both $\upM_{r,k}^*|_{\Theta_{r,k+1}}$ and $\upM_{r,k}^*|_{\wdt {\Theta}_{r,k}}$ are principal submatrices of $\upM_{r,k}^*$. From \eqref{eq:wdt-relation}, $\wdt{\Theta}_{r,k} \supset {\Theta}_{r,k+1}$. Thus
	$\upM_{r,k}^*|_{\Theta_{r,k+1}}$ is a principal submatrix of $\upM_{r,k}^*|_{\wdt {\Theta}_{r,k}}$.
	As a result, by Lemma ~\ref{lem:psm},
	$$
	\rho(\upM_{r,k}^*|_{\wdt{\Theta}_{r,k}}) \geq \rho(\upM_{r,k}^*|_{\Theta_{r,k+1}}).	
	$$
	
	Now we prove the another inequality. 
	Without loss of generality, we may assume that $\lambda=\rho(\upM_{r,k}^*|_{\wdt{\Theta}_{r,k}} )>0$ and ${\Theta}_{r,k+1} \subsetneq \wdt{\Theta}_{r,k}$.
	From Lemma~\ref{lem:PFN}, 
	$\lambda$ is an eigenvalue of $\upM_{r,k}^* |_{\wdt{\Theta}_{r,k}}$ and there is a nonnegative nonzero vector 
	$ \wdt\bfu=(\wdt{u}_{i})_{i\in {\wdt{\Theta}_{r,k}}}$ such that
	\begin{equation}\label{eq:3-3}
	  (\upM_{r,k}^* |_{\wdt{\Theta}_{r,k}}) \wdt\bfu^T =\lambda \wdt\bfu^T.		
	\end{equation}

	Fix $i \in \wdt{\Theta}_{r,k} \backslash {\Theta}_{r,k+1}$. Since $i \in \wdt{\Theta}_{r,k}$, there exists $i' \in \Theta_{r,k}$ such that $I_{r,i}^{k+1}\subset I_{r,i'}^k$. It follows from $i\not\in {\Theta}_{r,k+1}$ that $D_{r,i}^{k+1}\not\in \mB_{r,k}$. Thus by Lemma~\ref{lem:Theta-simple-facts}(5), $(D_{r,i}^{k+1}\cap B_{r,k})^\circ=\emptyset$.
	Hence, for any $j \in \wdt{\Theta}_{r,k}$, by letting $j' \in \Theta_{r,k}$ the unique element satisfying $I_{r,j}^{k+1}\subset I_{r,j'}^k$, we have %$D_{r,i}^{k+1}\not=I_{r,j'}^k$. From $i \in \wdt{\Theta}_{r,k}$ and $j' \in \Theta_{r,k}$, 
	$(D_{r,i}^{k+1}\cap I_{r,j'}^k )^\circ=\emptyset$. As a result, 
	$
	I_{r,j}^{k+1}  \not \subset D_{r,i}^{k+1}.
	$
	Hence
	\begin{equation*}%\label{eq:eigen-pass}
		(\upM_{r,k}^*)_{i,j}=0 ,\quad  j \in \wdt{\Theta}_{r,k}.
	\end{equation*}
	Thus from \eqref{eq:3-3} and $\lambda>0$, 
	we have 
	\begin{equation}\label{eq:wdtUiEqualZero}
	    \wdt{u}_{i}=0, \quad  i \in \wdt{\Theta}_{r,k} \backslash {\Theta}_{r,k+1}. 
    \end{equation}
    As a result, the vector $\bfu=(u_i)_{i \in \Theta_{r,k+1}}$ defined by $\bfu=\wdt{\bfu}|_{\Theta_{r,k+1}}$ is also a nonnegative nonzero vector.
	Furthermore, for any $i \in \Theta_{r,k+1}$, by \eqref{eq:wdtUiEqualZero} and the definitions of $\bfu$ and $\wdt\bfu$,
	\[
	   \sum_{j \in {\Theta}_{r,k+1}} (\upM_{r,k}^* )_{i,j} {u}_{j}
	   =\sum_{j \in \wdt{\Theta}_{r,k}} (\upM_{r,k}^* )_{i,j}\wdt{u}_{j}
	   =\lambda \wdt{u}_{i}=\lambda u_{i}. 
	\]
	From the arbitrariness of $i$, we have $\lambda \bfu^T=(\upM_{r,k}^* |_{\Theta_{r,k+1}}) \bfu^T$.
	Thus, $\lambda$ is an eigenvalue of $\upM_{r,k}^*|_{\Theta_{r,k+1}} $, which implies $\rho(\upM_{r,k}^* |_{\wdt{\Theta}_{r,k}})=\lambda \leq \rho(\upM_{r,k}^*|_{\Theta_{r,k+1}})$.
\end{proof}

%\begin{lem}\label{lem:PFN}
%Let $A=(a_{ij})_{n\times n}$ be a nonnegative matrix. Then $\rho(A)$ is an eigenvalue of $A$ and %there is a nonnegative nonzero vector $x$ such that $Ax=\rho(A)x$.

%\end{lem}
%Lemma~\ref{lem:PFN} is the key of the proof of Lemma~\ref{lem:3-rho-key-lem-1}.

Given $1\leq r\leq m$, $k \in \mathbb{Z}^+$ and $\Omega \subset \{ 1 ,\ldots, d_r {T_r}^{k-1}\}$, we define 
\[
  \wdt{\Omega}(r)= 
  \big\{ (i-1)T_r+\ell : \, i \in \Omega , 1 \leq \ell \leq T_r    \}.
\]
We write $\wdt{\Omega}=\wdt{\Omega}(r)$ if there is no confusion. It is clear that $\wdt{\Omega}(r)=\wdt{\Theta}_{r,k}$ if $\Omega=\Theta_{r,k}$.

\begin{lem}\label{lem:3-rho-key-lem-1}
For any $1\leq r\leq m$, $k \geq 1$ and $\Omega \subset \{ 1, \ldots, d_r {T_r}^{k-1} \}$, we
have
$$ \rho(\upM_{r,k}|_{\Omega}) = \rho( \upM_{r,k}^*|_{\wdt\Omega}).  $$
\end{lem}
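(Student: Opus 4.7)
The plan is to compare $A := \upM_{r,k}|_{\Omega}$ and $B := \upM_{r,k}^*|_{\wdt\Omega}$ by exploiting a block structure of $B$ that aggregates to $A$. First I would establish the key structural fact: for any $i_a, i_b \in \Omega$, the $T_r \times T_r$ block of $B$ consisting of rows $(i_a-1)T_r+1,\ldots,i_aT_r$ and columns $(i_b-1)T_r+1,\ldots,i_bT_r$ is either identically zero (precisely when $I_{r,i_b}^k \not\subset D_{r,i_a}^k$) or has a single nonzero row in which every entry equals $\ups_{r,i_a,i_b}^k$. Indeed, the set $D_{r,i_a}^k = L_{n_a}^{-1}(I_{r,i_a}^k)$ partitions naturally as $\bigsqcup_{s=1}^{T_r} D_{r,(i_a-1)T_r+s}^{k+1}$ into $T_r$ grid-aligned subintervals of length $\varepsilon_{r,k}$; by Lemma~\ref{lem:Theta-simple-facts}(5) together with \eqref{eq:Iri-def}, both the level-$k$ intervals $I_{r,i_b}^k$ and these subintervals are aligned with the grid of spacing $\varepsilon_{r,k}$. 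Hence whenever $I_{r,i_b}^k \subset D_{r,i_a}^k$ it must coincide with a unique such subinterval, giving $I_{r,i_b}^k = D_{r,(i_a-1)T_r+s_0(i_a,i_b)}^{k+1}$ for a unique $s_0(i_a,i_b) \in \{1,\ldots,T_r\}$; the defining maximum $\max\{|S_{n_a}(x)|: x \in D_{r,(i_a-1)T_r+s_0}^{k+1}\}$ then coincides with $\ups_{r,i_a,i_b}^k$. The upshot is the aggregation identity $\sum_{s=1}^{T_r} B_{(i_a-1)T_r+s,\, (i_b-1)T_r+t} = A_{i_a, i_b}$ for every $t$.

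Equipped with this identity, I would establish the two inequalities separately. For $\rho(B) \leq \rho(A)$, I would apply Lemma~\ref{lem:PFN} to obtain a nonnegative nonzero eigenvector $\wdt\bfu$ of $B$ with eigenvalue $\lambda := \rho(B)$, and aggregate by setting $v_{i_a} := \sum_{t=1}^{T_r} \wdt u_{i_a, t}$. Summing the eigenvalue equation $B\wdt\bfu^T = \lambda\wdt\bfu^T$ over $s$ and invoking the aggregation identity yields $A\bfv^T = \lambda \bfv^T$; nonnegativity forces $\bfv \neq 0$ whenever $\wdt\bfu \neq 0$, so $\lambda$ is an eigenvalue of $A$ and hence $\lambda \leq \rho(A)$. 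For $\rho(A) \leq \rho(B)$, again use Lemma~\ref{lem:PFN} to get $\bfv \geq 0$, $\bfv \neq 0$, with $A\bfv^T = \mu \bfv^T$ where $\mu := \rho(A)$. The case $\mu = 0$ is trivial, so assume $\mu > 0$ and disaggregate by setting
\[
\wdt u_{i_a, s} := \mu^{-1} \sum_{\substack{i_b \in \Omega:\ I_{r,i_b}^k \subset D_{r,i_a}^k \\ s_0(i_a,i_b) = s}} \ups_{r,i_a,i_b}^k\, v_{i_b}.
\]
A direct calculation from the block structure yields $B\wdt\bfu^T = \mu \wdt\bfu^T$, and summing over $s$ recovers $v_{i_a}$, so $\wdt\bfu$ is nonzero whenever $\bfv$ is, hence $\mu \leq \rho(B)$.

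The main obstacle should be the grid-alignment argument in the first step: verifying that the partition $\{D_{r,(i_a-1)T_r+s}^{k+1}\}_{s=1}^{T_r}$ of $D_{r,i_a}^k$ matches any level-$k$ interval $I_{r,i_b}^k$ contained in it exactly, with no partial overlaps. This reduces to a coordinate computation comparing starting endpoints in units of $\varepsilon_{r,k}$, but relies essentially on condition (A1) (uniform spacing) and (A4) (uniform contraction ratio $T_r$ within $\Lambda_r$), without which $I_{r,i_b}^k$ and $D_{r,(i_a-1)T_r+s}^{k+1}$ would not share a common grid.
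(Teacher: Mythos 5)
Your proposal is correct and takes essentially the same route as the paper: the grid-alignment fact (Lemma~\ref{lem:Theta-simple-facts}(5) together with (A1) and (A4)), which identifies each relevant $D_{r,i'}^{k+1}$ with a level-$k$ interval $I_{r,\ell}^k$ and shows the nonzero entries of $\upM_{r,k}^*$ in that row all equal $\ups_{r,i,\ell}^k$, followed by transferring nonnegative eigenvectors via Lemma~\ref{lem:PFN} in both directions. Your aggregation $v_{i_a}=\sum_t \wdt{u}_{i_a,t}$ and disaggregation $\wdt{u}_{i_a,s}=\mu^{-1}\ups_{r,i_a,i_b}^k v_{i_b}$ coincide (up to the harmless factor $\mu^{-1}$) with the paper's constructions, and your block aggregation identity is just a tidy repackaging of the paper's entrywise case analysis.
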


\begin{proof}
%For any $i \in \Omega$, we define 
%$$\I_{r,k}(i):=\{j\in \Omega: I_{r,j}^k \subset D_{r,i}^k \}.$$
%Assume that $D_i^k=I_\ell^{k-1}$ where $1 \leq \ell \leq NT^{k-2}$, we have 
%$$\I_k(i)=\{(\ell-1)T+1, \ldots, \ell T \}.$$

Firstly, we are going to prove $\rho(\upM_{r,k} |_\Omega)\geq \rho(\upM_{r,k}^* |_{\wdt{\Omega}})$.
Write $\lambda=\rho(\upM_{r,k}^*|_{\wdt{\Omega}} )$.
From Lemma~\ref{lem:PFN}, 
$\lambda$ is an eigenvalue of $\upM_{r,k}^* |_{\wdt{\Omega}}$ and there is a nonnegative nonzero vector 
$ \wdt\bfu=(\wdt{u}_{i'})_{i'\in \wdt{\Omega}}$ such that $(\upM_{r,k}^* |_{\wdt{\Omega}}) \wdt\bfu^T =\lambda \wdt\bfu^T$.

Define a vector $\bfu=(u_i)_{i\in\Omega}$ by letting
\[
{u}_i=\sum_{i'=(i-1)T_r+1}^{iT_r} \wdt{u}_{i'}, \quad i\in\Omega.
\]
Obviously, $\bfu$ is also nonnegative and nonzero.

Now we fix $i\in \Omega$. For any $(i-1)T_r<i'\leq iT_r$, by the definition of $\wdt\bfu^T$,
\[
  \lambda \wdt{u}_{i'} =\sum_{j'\in \wdt{\Omega}} \big(\upM_{r,k}^*\big)_{i',j'}  \wdt{u}_{j'}.
\]
Let $n$ be the unique element in $\{1,2,\ldots,N\}$ satisfying $I_{r,i'}^{k+1}\subset I_{r,i}^k\subset I_n$.

In the case that $D_{r,i'}^{k+1}\not\subset \bigcup_{p\in \Lambda_r} I_p$, it is clear that for all $j'\in\wdt{\Omega}$, we have $I^{k+1}_{r,j'}\not\subset D_{r,i'}^{k+1}$ so that $\big(\upM_{r,k}^*\big)_{i',j'}=0$. Hence, $\lambda \wdt{u}_{i'}=0$.  

In the case that $D_{r,i'}^{k+1}\subset \bigcup_{p\in \Lambda_r} I_p$, from Lemma~\ref{lem:Theta-simple-facts}(5), there exists $\ell\in\{1,\ldots,d_r{T_r}^{k-1}\}$ such that $I_{r,\ell}^k=D_{r,i'}^{k+1}$. If $\ell\not\in\Omega$, then for all $j'\in\wdt{\Omega}$, we have $I^{k+1}_{r,j'}\not\subset I_{r,\ell}^k= D_{r,i'}^{k+1}$ so that $\big(\upM_{r,k}^*\big)_{i',j'}=0$, which gives $\lambda \wdt{u}_{i'}=0$. If $\ell\in\Omega$, then 
$\big(\upM_{r,k}^*\big)_{i',j'}\not=0$ if and only if $I^{k+1}_{r,j'}\subset D_{r,i'}^{k+1}=I^k_{r,\ell}$, which is equivalent to $(\ell-1)T_r<j'\leq \ell T_r$. Furthermore, if $(\ell-1)T_r<j'\leq \ell T_r$, then
\[
\big(\upM_{r,k}^*\big)_{i',j'}=\max \big\{|S_n(x)|:\, x\in D^{k+1}_{r,i'} \big\}
=\max\big\{|S_n(x)|:\, x\in I^k_{r,\ell}\big\}=\ups_{r,i,\ell}^k,
\]
where the last equality follows from the fact that $I_{r,\ell}^k=D_{r,i'}^{k+1}\subset D_{r,i}^k$. Hence
\[
\lambda \wdt{u}_{i'} = \ups_{r,i,\ell}^k \sum_{j'=(\ell-1)T_r+1}^{ \ell T_r } \wdt{u}_{j'}
=\ups_{r,i,\ell}^k  {u}_{\ell}.
\]

%$\I_{r,k}(i)
%\begin{align*}
%	\{i':\, (i-1)T_r<i'\leq iT_r \mbox{ and } D_{r,i'}^{k+1}\subset \bigcup_{p\in \Lambda_r} I_p\}  &\to \{\ell\in \Omega: I_{r,\ell}^k \subset D_{r,i}^k \} \\
%	i' &\mapsto \ell
%\end{align*}

Notice that the map
$i'\mapsto \ell$ defined by $I^{k}_{r,\ell}=D^{k+1}_{r,i'}$ is a bijection from 
\[
\big\{i':\, (i-1)T_r<i'\leq iT_r \mbox{ and } D_{r,i'}^{k+1}=I_{r,\ell}^k \mbox{ for some } \ell\in \Omega\big\}
\] 
to $\{\ell\in \Omega: I_{r,\ell}^k \subset D_{r,i}^k \}$. Thus from the above arguments, 
\[
  \lambda {u}_i=\sum_{i'=(i-1)T_r+1}^{iT_r} \lambda \wdt{u}_{i'}=\sum_{\ell\in \Omega: I_{r,\ell}^k \subset D_{r,i}^k} \ups_{r,i,\ell}^k u_{\ell}=
   \sum_{\ell\in \Omega} (\upM_{r,k})_{i,\ell}  u_{\ell},
\]
where the last equality follows from the definitions of $\upM_{r,k}$ and $\ups_{r,i,\ell}^k$.
%\big((\upM_{r,k}|_\Omega )\bfu^T \big)_{i}.

From the arbitrariness of $i$, we have $\lambda \bfu^T=(\upM_{r,k} |_\Omega) \bfu^T$.
Thus, $\lambda$ is an eigenvalue of $\upM_{r,k}|_\Omega $, which implies $\rho(\upM_{r,k}^* |_{\wdt{\Omega}} )=\lambda \leq \rho(\upM_{r,k}|_\Omega)$.

In the second part, we are going to prove $\rho(\upM_{r,k}|_\Omega)\leq \rho(\upM_{r,k}^* |_{\wdt{\Omega}})$. Without loss of generality, we assume that 
$\mu=\rho(\upM_{r,k}|_\Omega)>0$. 
From Lemma~\ref{lem:PFN}, there is a nonnegative nonzero vector $\bfv=(v_i)_{i\in\Omega}$ such that $\big( \upM_{r,k} |_\Omega \big)\bfv^T =\mu \bfv^T$.

For any $i\in \Omega$ and $(i-1)T_r<i'\leq iT_r$, if there exists $\ell \in \Omega$ such that $I_{r,\ell}^k=D_{r,i'}^{k+1}$, then we define $\wdt{v}_{i'}=	\ups_{r,i,\ell}^k v_{\ell}$; otherwise, we define $\wdt{v}_{i'}=0$. We remark that in the former case, $I_{r,\ell}^k\subset D_{r,i}^k$. Hence,
it is straightforward to see that
\begin{equation}\label{eq:3-eigen-vv-sum}
	\sum_{i'=(i-1)T_r+1}^{iT_r} \wdt{v}_{i'}=\sum_{\ell\in \Omega: I_{r,\ell}^k \subset D_{r,i}^k} \ups_{r,i,\ell}^k v_{\ell}=\sum_{\ell\in \Omega} (\upM_{r,k})_{i,\ell}  v_{\ell}=\mu v_i.
\end{equation}
As a result, the vector $\wdt{\bfv}=(\wdt{v}_{i'})_{i'\in\wdt{\Omega}}$ is nonnegative and nonzero.

Fix $i\in\Omega$ and $i'\in\{(i-1)T_r+1,\ldots,iT_r\}$. Assume that $D_{r,i'}^{k+1}\not\subset \bigcup_{p\in \Lambda_r} I_p$ or $D_{r,i'}^{k+1}=I_{r,\ell}^k$ for some $\ell\not\in \Omega$. By definition, $\wdt{v}_{i'}=0$.
Notice that in this case, for all $j'\in \wdt{\Omega}$, we have $I^{k+1}_{r,j'}\not\subset D_{r,i'}^{k+1}$ so that $\big(\upM_{r,k}^*\big)_{i',j'}=0$. Hence, 
\[
\sum_{j'\in \wdt{\Omega}} \big(\upM_{r,k}^*\big)_{i',j'} \wdt{v}_{j'}=0=\mu \wdt{v}_{i'}.
\]

Assume that $D_{r,i'}^{k+1}=I_{r,\ell}^k$ for some $\ell\in \Omega$. By arguments in the first part of the proof, $\big(\upM_{r,k}^*\big)_{i',j'}\not=0$ if and only if $(\ell-1)T_r<j'\leq \ell T_r$, and if this holds, $\big(\upM_{r,k}^*\big)_{i',j'}=\ups_{r,i,\ell}^k$. Thus,
\[
\sum_{j'\in \wdt{\Omega}} \big(\upM_{r,k}^*\big)_{i',j'} \wdt{v}_{j'}=\sum_{j'=(\ell-1)T_r+1}^{\ell T_r} \big(\upM_{r,k}^*\big)_{i',j'} \wdt{v}_{j'}=\ups_{r,i,\ell}^k \sum_{j'=(\ell-1)T_r+1}^{ \ell T_r }  \wdt{v}_{j'}. 
\]
Combining this with \eqref{eq:3-eigen-vv-sum}, 
\[
\sum_{j'\in \wdt{\Omega}} \big(\upM_{r,k}^*\big)_{i',j'} \wdt{v}_{j'}=\ups_{r,i,\ell}^k \mu v_{\ell}=\mu \wdt{v}_{i'}.
\]

From the arbitrariness of $i'$, we have $\upM_{r,k}^*  \wdt{\bfv}^T =\mu \wdt{\bfv}^T$ so that $\mu$ is an eigenvalue of $\upM_{r,k}^*$. Hence, 
$\rho(\upM_{r,k})=\mu\leq \rho(\upM_{r,k}^*)$.
\end{proof}

\begin{proof}[Proof of Theorem~\ref{thm:rho-up-sub}]
Fix $1 \leq r \leq m$. First we prove (1). Fix $k \in \mathbb{Z}^+$. From the definitions of $\upM_{r,k}^*$ and $\upM_{r,k+1}$, it is easy to see that $\upM_{r,k}^* \geq \upM_{r,k+1}\geq 0$. Thus from Lemma~\ref{lem:SptlRadIncr-Nonngtv},
\[
	\rho( \upM_{r,k}^*|_{\Theta_{r,k+1}}) \geq \rho(\upM_{r,k+1}|_{\Theta_{r,k+1}}).
\]
Combining this with Lemmas~\ref{lem:new-eq} and \ref{lem:3-rho-key-lem-1}, 
\[ 
  \rho(\upM_{r,k}|_{\Theta_{r,k}})
  = \rho( \upM_{r,k}^*|_{\wdt {\Theta}_{r,k}})
  = \rho(\upM_{r,k}^*|_{{\Theta_{r,k+1}}}) 
   \geq \rho(\upM_{r,k+1}|_{{\Theta_{r,k+1}}}).
\]
Thus $\{\rho(\upM_{r,k}|_{\Theta_{r,k}})\}_{k=1}^\infty$ is decreasing. As a result, there exists a nonnegative constant $\uprho_r$ such that
$$ \uprho_r=\lim_{k  \to \infty}\rho(\upM_{r,k}|_{\Theta_{r,k}}).$$

Now we prove (2). For each $k \in \Z^+$, we define
	\begin{equation*}
		\big(\lowM_{r,k}^*\big)_{i,j}= 
		\begin{cases}
			\min \big\{ \big| S_n(x) \big|: x \in D_{r,i}^{k+1}\big\} ,\quad & \mbox{if } I_{r,j}^{k+1}\subset D_{r,i}^{k+1}, \\
			0, \quad & \mbox{otherwise}.
		\end{cases}
	\end{equation*}
	Then $\lowM_{r,k}^* \leq \lowM_{r,k+1}$ so that
	$
	\rho( \lowM_{r,k}^*|_{\Theta_{r,k+1}}) \leq \rho(\lowM_{r,k+1}|_{\Theta_{r,k+1}}).
	$
	Using similar arguments in the proofs of Lemmas ~\ref{lem:new-eq} and \ref{lem:3-rho-key-lem-1},
	we can obtain
	\[ 
	  \rho(\lowM_{r,k}^*|_{\wdt {\Theta}_{r,k}}) = \rho(\lowM_{k}^*|_{{\Theta_{r,k+1}}}) \quad \mbox{and} \quad
	  \rho(\lowM_{r,k}|_{\Theta_{r,k}}) = \rho( \lowM_{r,k}^*|_{\wdt {\Theta}_{r,k}}).
	\]
	Hence
	$$	 \rho(\lowM_{r,k}|_{\Theta_{r,k}}) =\rho( \lowM_{r,k}^*|_{\wdt {\Theta}_{r,k}})=\rho( \lowM_{r,k}^*|_{{\Theta_{r,k+1}}}) \leq \rho(\lowM_{r,k+1}|_{{\Theta_{r,k+1}}}).$$
	As a result, the sequence $\{ \rho(\lowM_{r,k}) \}_{k=1}^\infty$ is increasing.

In the sequel of the proof, we assume that $S_n$ is positive on $D_n \cap K_r$ for all $n\in \Lambda_r$. Using similar arguments in the proof of Theorem~\ref{thm:irr}(2), we know that there exists $k_1\in \bZp$ such that $S_n$ is positive on $D_n \cap B_{r,k_1}$ for all $n\in \Lambda_r$. Define
\[
  \lows_r=\min\{ |S_n(x)|:\, x\in  D_n\cap B_{r,k_1},  n \in \Lambda_r \}.
\]
Then $\lows_r>0$.
Notice that $S_n$ is uniformly continuous on $D_n$ for all $n\in \Lambda_r$.
Fix $\vep>0$, there exists a positive integer $k_2\geq k_1$ such that 
\begin{align*}
	\sup\big\{ |S_n(x')-S_n(x'')|:\, x',x''\in D_{r,i}^k\big\} \leq \lows_r\vep
\end{align*}
for all $k>k_2$ and all $1\leq i\leq d_r {T_r}^{k-1}$.

%$O(S_n,D_{r,i}^k)$, the oscillation of $S_n$ on $D_{r,i}^k \subset D_n  $, is less than $\lows_r\vep$ for all $k>k_0$ and all $1\leq i\leq d_r {T_r}^{k-1}$.

%\[ O(S_n,D_{r,i}^k)<\underline{S}_r\vep.\]
% $j \in \Theta_{r,k}$, we have $O(S_n,I_{r,j}^k)< H_r \vep $, where we use $O(S_n,I_{r,j}^k)$ to denote the oscillation of $S_n$ on $I_{r,j}^k$.

Assume that $k>k_2$ and $i,j \in \Theta_{r,k}$. 
In the case that $I_{r,j}^k \subset D_{r,i}^k $, 
let $n$ be the unique element in $\Lambda_r$ such that $I_{r,i}^k \subset I_n$.
Then
\begin{align*}
	0\leq  \ups_{r,i,j}^k-\lows_{r,i,j}^k 
	=	&\sup\big\{ |S_n(x')-S_n(x'')|:\, x',x''\in I_{r,j}^k\big\} \\
	\leq 	&\sup\big\{ |S_n(x')-S_n(x'')|:\, x',x''\in D_{r,i}^k\big\} 
    \leq \lows_r\vep \leq  \lows_{r,i,j}^k  \vep.
\end{align*}
In the case that $I_{r,j}^k \not\subset D_{r,i}^k $, we have $\ups_{r,i,j}^k=\lows_{r,i,j}^k=0$.
Hence, for all $k>k_2$ and $i,j \in \Theta_{r,k}$,
\begin{align*}
	\lows_{r,i,j}^k \leq \ups_{r,i,j}^k \leq (1 +\vep)\lows_{r,i,j}^k .
\end{align*}
From the definitions of $\upM_{r,k}|_{\Theta_{r,k}}$ and $\lowM_{r,k}|_{\Theta_{r,k}}$, 
$$\lowM_{r,k}|_{\Theta_{r,k}} \leq \upM_{r,k}|_{\Theta_{r,k}} \leq (1 +\vep) \lowM_{r,k}|_{\Theta_{r,k}}.$$
Thus
$\rho (\lowM_{r,k}|_{\Theta_{r,k}}) \leq \rho( \upM_{r,k}|_{\Theta_{r,k}}) \leq (1 +\vep) \rho(\lowM_{r,k}|_{\Theta_{r,k}})$.
By letting $k$ tend to infinity, 
\[
 \frac{\uprho_r}{1+\vep} \leq \varliminf_{k  \to \infty}\rho(\lowM_{r,k}|_{\Theta_{r,k}})
 \leq \varlimsup_{k  \to \infty}\rho(\lowM_{r,k}|_{\Theta_{r,k}}) \leq  \uprho_r.
\]
From the arbitrariness of $\vep$, $\lim_{k  \to \infty}\rho(\lowM_{r,k}|_{\Theta_{r,k}})=\uprho_r$.
\end{proof}

\section{Some lemmas for box dimension estimate}

\subsection{The relationship between box dimension and oscillation sum}
For any $k_1, k_2\in\mathbb{Z}$ and $\varepsilon>0$, we call $[k_1\varepsilon,(k_1+1)\varepsilon] \times [k_2\varepsilon,(k_2+1)\varepsilon]$ an $\varepsilon$-coordinate square in $\mathbb{R}^2$. Let $E$ be a bounded set in $\mathbb{R}^2$ and $\mathcal{N}_{\vep}(E) $ the least number of $\varepsilon$-coordinate squares that cover $E$.
We define
\begin{equation}\label{eq:box-dim-def}
	   \updimB E=\varlimsup_{\varepsilon\to 0+}\frac{\log \mathcal{N}_{\varepsilon}(E)}{\log1/\varepsilon}
	\quad\text{and}	\quad
	\lowdimB E=\varliminf_{\varepsilon\to 0+}\frac{\log \mathcal{N}_{\varepsilon}(E)}{\log1/\varepsilon},
\end{equation}
and call them the \emph{upper box dimension} and the \emph{lower box dimension} of $E$, respectively. If $\updimB E=\lowdimB E$, then we use $\dim_B E$ to denote the common value and call it the \emph{box dimension} of $E$.
It is well known that in \eqref{eq:box-dim-def}, it is enough to consider limits as $\varepsilon$ tends to $0$ through any decreasing sequence $\{\varepsilon_k\}$ such that $\varepsilon_{k+1}\geq c\varepsilon_k$ for some constant $0<c<1$, in particular for $\varepsilon_k=\alpha c^k$, where $\alpha>0$ is a constant.
It is well known that $\lowdimB  E\geq 1$ when $E$ is the graph of a continuous function on a closed interval of $\mathbb{R}$. Please see ~\cite{Fal14} for details.

Let $g$ be a continuous function on $J=[a,b]$. For each $p\in \mathbb{Z}^+$ and $1\leq r\leq m$,
we define
\[
O_{r,p}(g, J)
=\sum\limits_{\ell=1}^{{T_r}^p} O\Big(g,\Big[a+\frac{\ell-1}{{T_r}^p}(b-a) , a+\frac{\ell}{{T_r}^p}(b-a) \Big]\Big),
\]
where we use $O(g,U)$ to denote the \emph{oscillation} of $g$ on $U\subset J$, that is,
\begin{equation*}
	O(g,U)= \sup\limits_{x',x'' \in U}|g({x}')-g({x}'')|.
\end{equation*}
We call $O_{r,p}(g, J)$ the \emph{oscillation sum} of $g$ on $J$ with level $p$ and associated with $r$.
It is clear that $\{O_{r,p}(g,J)\}_{p=1}^\infty$ is increasing with respect to $p$. Thus $\lim_{p\to \infty} O_{r,p}(g,J)$ always exists.
Recall that $\Var(g,J)$ is the classical total variation of $g$ on $J$. We have the following fact.
\begin{lem}[\cite{JR24}, Lemma~4.2]\label{lem:VarOinfty}
	Let $g$ be a continuous function on a closed interval $J$. Then for any $1\leq r\leq m$, $\lim_{p\to \infty} O_{r,p}(g,J)=\Var(g,J)$.
\end{lem}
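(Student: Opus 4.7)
The plan is to establish the two inequalities
\[
O_{r,p}(g,J)\leq \Var(g,J)\quad\text{for all }p,\qquad \varliminf_{p\to\infty}O_{r,p}(g,J)\geq \Var(g,J),
\]
which together with the monotonicity of $O_{r,p}(g,J)$ in $p$ will give the stated limit. The value $T_r\geq 2$ and the index $r$ play only a cosmetic role: the argument uses merely that the mesh $(b-a)/{T_r}^p$ of the uniform partition tends to zero as $p\to\infty$.

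For the upper bound I would argue directly. The level-$p$ oscillation sum is taken over a partition of $J$ into $T_r^p$ subintervals $U_\ell$ of equal length. On each closed subinterval $O(g,U_\ell)=\max_{U_\ell}g-\min_{U_\ell}g$ (by continuity of $g$), and a two-point partition through the maximizer and minimizer shows $O(g,U_\ell)\leq \Var(g,U_\ell)$. Summing over $\ell$ and invoking the additivity of total variation on intervals with disjoint interiors yields $O_{r,p}(g,J)\leq \Var(g,J)$; this bound is vacuous but still valid when $\Var(g,J)=+\infty$.

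The substantive direction is the lower bound. I would fix an arbitrary $M<\Var(g,J)$ (allowing $M$ to be arbitrarily large in the infinite-variation case) and choose a partition $a=x_0<x_1<\cdots<x_n=b$ with $\sum_{i=1}^n|g(x_i)-g(x_{i-1})|>M$. By uniform continuity of $g$ on $J$, given $\vep>0$ there is $\delta>0$ with $|g(x')-g(x'')|<\vep/(2n)$ whenever $|x'-x''|<\delta$. Choose $p$ large enough that $(b-a)/{T_r}^p$ is smaller than both $\delta$ and $\min_i(x_i-x_{i-1})$; write $a=y_0<y_1<\cdots<y_{{T_r}^p}=b$ for the uniform partition and let $y_{k_i}$ be the partition point nearest $x_i$. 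The size condition on ${T_r}^p$ ensures $k_0<k_1<\cdots<k_n$, and the uniform-continuity estimate gives
\[
\sum_{i=1}^n|g(y_{k_i})-g(y_{k_{i-1}})|\;>\;M-\vep.
\]
A telescoping bound through consecutive $y_\ell$'s followed by $|g(y_\ell)-g(y_{\ell-1})|\leq O(g,[y_{\ell-1},y_\ell])$ then yields
\[
M-\vep\;<\;\sum_{\ell=1}^{{T_r}^p}O(g,[y_{\ell-1},y_\ell])\;=\;O_{r,p}(g,J).
\]
Sending $\vep\to 0$, then $M\to \Var(g,J)^-$ (or $M\to\infty$ if the variation is infinite) gives $\varliminf_{p\to\infty}O_{r,p}(g,J)\geq \Var(g,J)$, and combined with the upper bound the proof is complete.

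The main obstacle I anticipate is purely bookkeeping: guaranteeing that the nearest-point rounding $x_i\mapsto y_{k_i}$ preserves the strict order of the indices so that the telescoping step is valid. This is handled by requiring the level-$p$ mesh to be strictly smaller than $\min_i(x_i-x_{i-1})$, a condition that can be imposed simultaneously with the uniform-continuity condition by choosing $p$ sufficiently large.
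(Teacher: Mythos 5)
Your proof is correct. Note that the paper does not prove this lemma itself but cites it from \cite{JR24}; your argument is the standard one for such a statement: the upper bound $O_{r,p}(g,J)\leq \Var(g,J)$ via additivity of total variation over the uniform subintervals, and the lower bound by approximating an arbitrary partition by nearby uniform partition points (using uniform continuity and a mesh small enough to preserve the ordering) followed by telescoping, together with the monotonicity of $O_{r,p}(g,J)$ in $p$ already observed in the paper. The one bookkeeping point you flag, that the rounded indices stay strictly increasing, is indeed handled by requiring the mesh to be smaller than the minimal gap of the chosen partition, so there is no gap.
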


The following lemma presents a useful method to estimate the upper and lower box dimensions of the graph of a continuous function by its oscillation sum. Similar results can be found in \cite{Fal14,JR23,JR24,RSY09}.

\begin{lem}\label{lem:box-dim-new-x1}
	Assume that $g$ is a continuous function on a closed interval $J$. Then for any $1 \leq r \leq m$, 
	\begin{align}
		&\lowdimB \Gamma g = 1+\varliminf_{p\to\infty}\frac{\log \big( O_{r,p}(g,J)+1\big)}{p\log T_r}, 	\quad \mbox{and} \label{eq:box-dim-cal-tmp-3} \\
		&\updimB \Gamma g = 1+\varlimsup_{p\to\infty} \frac{\log \big(O_{r,p}(g,J)+1\big)}{p\log T_r}.
		\label{eq:box-dim-cal-tmp-4}
	\end{align}
\end{lem}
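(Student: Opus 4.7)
The plan is to exploit the subsequence criterion for box dimension already recalled in the text, and reduce to a standard oscillation counting argument. Specifically, I would work with the sequence $\varepsilon_p = |J|/T_r^p$, which satisfies $\varepsilon_{p+1}/\varepsilon_p = 1/T_r \in (0,1)$, so the upper and lower box dimensions of $\Gamma g$ can be computed using only this subsequence. The motivation is that $\varepsilon_p$ is exactly the length of each summand interval defining $O_{r,p}(g,J)$: if we write $J = [a,b]$ and
\[
  J_{p,\ell} = \bigl[a + (\ell-1)(b-a)/T_r^p,\; a + \ell(b-a)/T_r^p\bigr], \quad 1\le \ell \le T_r^p,
\]
then $|J_{p,\ell}| = \varepsilon_p$ and $O_{r,p}(g,J) = \sum_{\ell} O(g,J_{p,\ell})$.

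Next, I would carry out the usual column-by-column counting of $\varepsilon_p$-coordinate squares. For each $\ell$, the piece $\Gamma g|_{J_{p,\ell}}$ has $x$-projection of length $\varepsilon_p$, so it can only meet squares in at most two consecutive columns of the coordinate grid; and in each such column, the number of squares required to cover it is between $O(g,J_{p,\ell})/\varepsilon_p$ and $O(g,J_{p,\ell})/\varepsilon_p + 2$. Summing over $\ell$ and keeping track of the (bounded) overcount that comes from squares straddling two adjacent subintervals gives constants $c_1, c_2 > 0$, independent of $p$, with
\[
  c_1\bigl(O_{r,p}(g,J)/\varepsilon_p + T_r^p\bigr)
  \;\le\; \N_{\varepsilon_p}(\Gamma g) \;\le\;
  c_2\bigl(O_{r,p}(g,J)/\varepsilon_p + T_r^p\bigr).
\]
Rewriting using $1/\varepsilon_p = T_r^p/|J|$, this becomes $\N_{\varepsilon_p}(\Gamma g) \asymp T_r^p\bigl(O_{r,p}(g,J) + 1\bigr)$ up to multiplicative constants that do not depend on $p$.

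Finally, I would take logarithms, divide by $\log(1/\varepsilon_p) = p\log T_r - \log|J|$, and pass to $\varliminf$ and $\varlimsup$ as $p \to \infty$. The multiplicative constants disappear in the limit, the leading term $p\log T_r$ in the denominator absorbs the $\log|J|$ correction, and the main term $p\log T_r$ in the numerator yields the ``$1+$'' in the final formulas. The only minor subtlety worth verifying carefully is the appearance of the ``$+1$'' inside $\log(O_{r,p}(g,J)+1)$: if $O_{r,p}(g,J)\to\infty$ then $\log(O_{r,p}(g,J)+c) \sim \log(O_{r,p}(g,J)+1)$ for any $c>0$, while if $O_{r,p}(g,J)$ stays bounded then both $\log$ expressions are bounded and the whole fraction tends to $0$, which is consistent with the right-hand sides of the claimed equalities. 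I do not expect any serious obstacle; the lemma is essentially bookkeeping applied to the standard oscillation estimate for $\N_{\varepsilon}(\Gamma g)$.
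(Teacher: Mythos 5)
Your proposal is correct and follows essentially the same route as the paper: cover $\Gamma g$ at the scales $\varepsilon_p=|J|/T_r^p$, bound $\mathcal{N}_{\varepsilon_p}(\Gamma g)$ above and below by $\varepsilon_p^{-1}O_{r,p}(g,J)+T_r^p$ up to harmless factors, and take logarithms along this geometric subsequence. The only cosmetic difference is that the paper anchors the grid at $(a,0)$ (its ``generalized $\varepsilon$-coordinate squares''), so the columns coincide exactly with the partition intervals and no multiplicative constants are needed, whereas you keep the standard grid and absorb the misalignment and double-counting into constants $c_1,c_2$, which disappear in the limit.
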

\begin{proof}
	Assume that $J=[a,b]$. For any $k_1, k_2\in\mathbb{Z}$ and $\varepsilon>0$, we call $[a+k_1\varepsilon,a+(k_1+1)\varepsilon] \times [k_2\varepsilon,(k_2+1)\varepsilon]$ a generalized $\varepsilon$-coordinate square with center $(a,0)$ in $\mathbb{R}^2$. We use $\mathcal{N}_{\varepsilon}^{(a,0)}(E)$ to denote the least number of generalized $\varepsilon$-coordinate squares with center $(a,0)$ to cover $E$. It is clear that in \eqref{eq:box-dim-def}, we can replace $\mathcal{N}_{\varepsilon}(E)$ by $\mathcal{N}_{\varepsilon}^{(a,0)}(E)$.
	
	Let $\vep_p=|J|/{T_r}^{p}$ for all $p \in \Z^+$.
	It is straightforward to check that
	\[
	    \max\{{T_r}^p, \vep_p^{-1} O_{r,p}(g,J)\} \leq \mathcal{N}_{\varepsilon_p}^{(a,0)}(\Gamma g)  \leq \vep_p^{-1}  O_{r,p}(g,J) +2{T_r}^p.
	\]
	Thus, by using $\max\{x,y\}\geq (x+y)/2$ for all $x,y\in \bR$, we have
		\begin{equation}\label{eq:boxxx1}
			\frac{1}{2}\vep_p^{-1} \big(O_{r,p}(g,J) +|J|\big) \leq \mathcal{N}_{\varepsilon_p}^{(a,0)}(\Gamma g)  \leq \vep_p^{-1} \big(O_{r,p}(g,J) +2|J|\big).
		\end{equation} 
	As a result,
	\begin{align*}
		\updimB E 
		&= \varlimsup_{p\to \infty} \frac{\log \mathcal{N}_{\varepsilon_p}^{(a,0)}(\Gamma g) }{-\log (\vep_p)}	\\
		&\geq 1+\varlimsup_{p\to \infty} \frac{\log \big( O_{r,p}(g,J) +|J| \big) -\log 2 }{ p \log T_r- \log|J|}  
		=1+ \varlimsup_{p\to \infty} \frac{\log \big(   O_{r,p}(g,J) +1\big)}{ p \log T_r}.
	\end{align*}
	Similarly,
	\begin{align*}
		\updimB E 
		&\leq 1+\varlimsup_{p\to \infty} 	 \frac{ \log \big(    O_{r,p}(g,J) +2|J| \big)}{ p \log T_r- \log|J|}  
		=1+ \varlimsup_{p\to \infty} \frac{\log \big(   O_{r,p}(g,J) +1\big)}{ p \log T_r}.
	\end{align*}
	Combining these two inequalities, \eqref{eq:box-dim-cal-tmp-4} holds.
	
	Using similar arguments, we know that \eqref{eq:box-dim-cal-tmp-3} holds.
\end{proof}

The following result is a direct corollary of the above lemma.
\begin{lem}\label{lem:osci-box-dim}
	Assume that $g$ is a continuous function on a closed interval $J$.
	Then for any $1\leq r\leq m$ and $\vep>0$, there exists $c_{\vep}>0$ such that 
	\begin{equation}\label{eq:boxx1}
		c_{\vep}^{-1} {T_r}^{p \big( \lowdimB \Gamma g -1-\vep\big)}
		\leq O_{r,p}(g,J) +1
		\leq c_{\vep} {T_r}^{p \big(  \updimB \Gamma g -1+\vep\big)}, \quad  p \geq 1.
	\end{equation} 
\end{lem}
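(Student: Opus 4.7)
The plan is to deduce both inequalities directly from the identities
\[
\updimB\Gamma g=1+\varlimsup_{p\to\infty}\frac{\log\bigl(O_{r,p}(g,J)+1\bigr)}{p\log T_r}, \qquad \lowdimB\Gamma g=1+\varliminf_{p\to\infty}\frac{\log\bigl(O_{r,p}(g,J)+1\bigr)}{p\log T_r}
\]
established in Lemma~\ref{lem:box-dim-new-x1}. These already encode the correct asymptotic exponential rates; the only task of the present lemma is to promote the asymptotic (lim\,sup / lim\,inf) statements to a two-sided inequality valid for \emph{every} $p\geq 1$, paying for this uniformity with the constant $c_\vep$.

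For the upper estimate I would fix $\vep>0$ and apply the definition of lim\,sup to produce some $p_0=p_0(\vep)$ such that
\[
\frac{\log\bigl(O_{r,p}(g,J)+1\bigr)}{p\log T_r}<\updimB\Gamma g-1+\vep,\qquad p\geq p_0,
\]
which exponentiates to $O_{r,p}(g,J)+1\leq T_r^{p(\updimB\Gamma g-1+\vep)}$. For the finitely many $p\in\{1,\ldots,p_0-1\}$ each quantity $O_{r,p}(g,J)+1$ is finite (since $g$ is continuous on the compact interval $J$, so $O_{r,p}(g,J)\leq T_r^p\,O(g,J)<\infty$), and I would simply enlarge $c_\vep$ enough to dominate the ratio $\bigl(O_{r,p}(g,J)+1\bigr)/T_r^{p(\updimB\Gamma g-1+\vep)}$ on this finite set.

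The lower estimate is symmetric: the lim\,inf identity supplies some $p_1=p_1(\vep)$ beyond which $O_{r,p}(g,J)+1\geq T_r^{p(\lowdimB\Gamma g-1-\vep)}$ holds, and one further enlarges $c_\vep$ to handle $p<p_1$. Taking the maximum of the two constants produced above yields a single $c_\vep$ for which \eqref{eq:boxx1} holds uniformly in $p\geq 1$.

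There is no real obstacle here; Lemma~\ref{lem:osci-box-dim} is a convenient, uniform-in-$p$ repackaging of Lemma~\ref{lem:box-dim-new-x1}, stated in the form most useful for the box-dimension computations that follow. The slight care point is merely bookkeeping: ensuring the exceptional small values of $p$ are absorbed into the single constant $c_\vep$.
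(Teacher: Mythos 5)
Your proposal is correct and follows essentially the same route as the paper: apply Lemma~\ref{lem:box-dim-new-x1} to get the two-sided bound for all sufficiently large $p$, then enlarge the constant $c_\vep$ to absorb the finitely many remaining values of $p$ (using that each $O_{r,p}(g,J)$ is finite and nonnegative). No gaps.
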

\begin{proof}	
	For any $1\leq r\leq m$ and $\vep>0$, from Lemma~\ref{lem:box-dim-new-x1}, there exists $P\in \bZp$ such that for all $p>P$,
	$$
	\lowdimB \Gamma g -1-\vep \leq \frac{\log \big(O_{r,p}(g,J)+1\big)}{p\log T_r} \leq \updimB \Gamma g -1+\vep,
	$$
	and therefore
	\begin{equation}\label{eq:lem4.3-1}
	    {T_r}^{p \big( \lowdimB \Gamma g -1-\vep\big)} \leq O_{r,p}(g,J)+1  \leq {T_r}^{p \big( \updimB \Gamma g -1+\vep\big)}, \quad p>P.		
	\end{equation}
	Choose $c_\epsilon>1$ large enough, such that for $p=1,2,\ldots,P$,
	\[
	c_\epsilon^{-1} {T_r}^{p \big( \lowdimB \Gamma g -1-\vep\big)}
	\leq O_{r,p}(g,J) +1
	\leq c_\epsilon {T_r}^{p \big(  \updimB \Gamma g -1+\vep\big)}.
	\]
	Combining this with \eqref{eq:lem4.3-1}, we know that \eqref{eq:boxx1} holds.
\end{proof}

%\begin{lem}\label{lem:osci-box-dim-2}
%Let $g$ be a continuous function on a closed interval $J$ and $1\leq r\leq m$. 
%\blue{Assume that $g$ is not a constant function,}\green{ or $\Var(g,J)>0$,
	%then} for any  $\vep>0$ , there exists $C_\vep>0$ such that
%$$  C_\vep^{-1} {T_r}^{p \big( \lowdimB (\Gamma g) -1-\vep\big)}
%	\leq O_{r,p}(g,J) 
%	\leq C_\vep {T_r}^{p \big(  \updimB (\Gamma g) -1+\vep\big)}, \quad \forall p \in \mathbb{Z}^+ .$$
%\end{lem}

\subsection{A crucial lemma for box dimension estimate}

By the definition of $W_n$, it is easy to see that $W_n(x,y)=(L_n(x),F_n(x,y))$, where
$F_n(x,y)=S_n(x)y+q_n(x)$.
From \eqref{eq:rfif}, 
\[
   W_n(x,f(x))=(L_n(x),f(L_n(x))), \quad x\in D_n. 
\]
Thus, we have the following useful equality:
\begin{equation}\label{eq:RFIF-Rec-Expression}
	f(L_n(x))=S_n(x)f(x)+q_n(x), \quad x\in D_n, \; n=1,2,\ldots,N.
\end{equation}

Write $M_{f,J}=\sup_{x \in J}|f(x)|$ for any $J\subset [x_0,x_N]$.
By using similar arguments as in the proof of Lemma~4.4 in \cite{JR24}, we have the following result. 
\begin{lem}\label{lem:3-osci-1}
	For any $1\leq n \leq N$, $J\subset D_n$ and $x \in J$, 
	\begin{equation}\label{eq:Lem4.4}	
		\Big| O(f,L_n(J)) -|S_n(x)| \cdot O(f,J) \Big|\leq 2 M_{f,J} O(S_n,J)+O(q_n,J).
	\end{equation}
\end{lem}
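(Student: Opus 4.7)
The plan is to exploit the functional recurrence \eqref{eq:RFIF-Rec-Expression}, namely $f(L_n(x)) = S_n(x)f(x) + q_n(x)$ for $x \in D_n$, and then estimate differences of this expression, introducing $S_n(x)$ at a fixed reference point $x \in J$ via an add-and-subtract trick.

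First, I would fix arbitrary $x', x'' \in J$ and write
\[
f(L_n(x')) - f(L_n(x'')) = \bigl[S_n(x')f(x') - S_n(x'')f(x'')\bigr] + \bigl[q_n(x') - q_n(x'')\bigr].
\]
To isolate the factor $S_n(x)$, I would decompose
\[
S_n(x')f(x') - S_n(x'')f(x'') = S_n(x)\bigl(f(x') - f(x'')\bigr) + (S_n(x')-S_n(x))f(x') - (S_n(x'')-S_n(x))f(x''),
\]
and use $|S_n(x') - S_n(x)|, |S_n(x'') - S_n(x)| \leq O(S_n, J)$, along with $|f(x')|, |f(x'')| \leq M_{f,J}$ and $|q_n(x')-q_n(x'')| \leq O(q_n, J)$. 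The triangle inequality then gives
\[
|f(L_n(x')) - f(L_n(x''))| \leq |S_n(x)| \cdot |f(x')-f(x'')| + 2M_{f,J} O(S_n,J) + O(q_n,J).
\]
Taking the supremum over $x',x'' \in J$ and noting that $L_n$ is a bijection, so $O(f, L_n(J)) = \sup_{x',x'' \in J}|f(L_n(x'))-f(L_n(x''))|$, yields the upper bound
\[
O(f, L_n(J)) - |S_n(x)| \cdot O(f, J) \leq 2M_{f,J} O(S_n,J) + O(q_n,J).
\]

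For the reverse inequality I would solve the same decomposition for $S_n(x)(f(x')-f(x''))$, obtaining
\[
|S_n(x)| \cdot |f(x')-f(x'')| \leq |f(L_n(x'))-f(L_n(x''))| + 2M_{f,J} O(S_n,J) + O(q_n,J),
\]
and then bound the right-hand side by $O(f,L_n(J)) + 2M_{f,J} O(S_n,J) + O(q_n,J)$. Taking the supremum over $x',x''$ produces the matching lower bound, and the two estimates together give the stated two-sided inequality.

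The routine nature of the argument means there is no real obstacle — the only subtlety is the add-and-subtract step that replaces $S_n(x')$ and $S_n(x'')$ with the common reference value $S_n(x)$, at the price of two error terms each bounded by $M_{f,J}O(S_n, J)$. Once this decomposition is in place, the result follows immediately from the triangle inequality and the definition of oscillation.
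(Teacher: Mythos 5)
Your proof is correct and follows essentially the same route as the paper: the paper likewise uses the recurrence $f(L_n(x))=S_n(x)f(x)+q_n(x)$, separates the $q_n$ contribution, and replaces $S_n(x')$, $S_n(x'')$ by the reference value $S_n(x)$ at the cost of $2M_{f,J}O(S_n,J)$ before taking suprema in both directions. No gaps.
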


\begin{proof}
	From \eqref{eq:RFIF-Rec-Expression},
	\[
		O(f,L_n(J))=\sup_{x',x'' \in J}\big|S_n(x')f(x')-S_n(x'')f(x'') + q_n(x')-q_n(x'')\big|.
	\]
	Thus
	\begin{equation}\label{eq:Lem4.4-1}
		-O(q_n,J) \leq O(f,L_n(J))-\sup_{x',x'' \in J}\big|S_n(x')f(x')-S_n(x'')f(x'')\big| \leq O(q_n,J).
	\end{equation}
	
	For any $x',x''\in J$, we have 
	\[
	  \big|S_n(x')f(x')-S_n(x'')f(x'') - S_n(x)\big(f(x')-f(x'')\big)\big| \leq 2M_{f,J}O(S_n,J). 
	\]
	Thus
	\[
	  \sup_{x',x''\in J}\big|S_n(x')f(x')-S_n(x'')f(x'')|\geq |S_n(x)|O(f,J)-2M_{f,J} O(S_n,J) 
	\]
	and
	\[
	  \sup_{x',x''\in J}\big|S_n(x')f(x')-S_n(x'')f(x'')|\leq |S_n(x)|O(f,J)+2M_{f,J} O(S_n,J).
	\]
	Combining these two inequalities with \eqref{eq:Lem4.4-1}, we can easily see that \eqref{eq:Lem4.4} holds.
\end{proof}

The following lemma plays a crucial role to estimate the box dimension of $\Gamma f$.

\begin{lem}\label{lem:box-pass}

	Let $1 \leq n \leq N$ and $k \in \Z^+$. Assume that $J\subset D_n$ is a closed interval. Then the following statements hold.
	\begin{enumerate}
		\item 
		$ \updimB \big(\Gamma f|_{L_n(J)} \big) \leq  \updimB \big(\Gamma f|_{J} \big).$	
		\item If $S_n$ is positive on $J$, then 
		$$
		 \updimB \big(\Gamma f|_{L_n(J)} \big)= \updimB \big(\Gamma f|_J \big)\quad \mbox{and}\quad
		\lowdimB \big(\Gamma f|_{L_n(J)} \big)=\lowdimB \big(\Gamma f|_J \big).
		$$		
	\end{enumerate}
\end{lem}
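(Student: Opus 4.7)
The plan is to use the oscillation-sum characterization of box dimension in Lemma~\ref{lem:box-dim-new-x1} to reduce both statements to a comparison between $O_{r,p}(f,L_n(J))$ and $O_{r,p}(f,J)$ for one fixed choice of $r\in\{1,\ldots,m\}$ (the choice is immaterial because box dimension is a metric invariant, so Lemma~\ref{lem:box-dim-new-x1} yields the same value for every $r$). The comparison itself is produced by a piece-by-piece application of Lemma~\ref{lem:3-osci-1}, summed over the uniform partition.

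First I fix any $r\in\{1,\ldots,m\}$ and write $J=[a,b]$, together with the uniform partition $J_{p,\ell}=\bigl[a+(\ell-1)|J|/T_r^p,\,a+\ell|J|/T_r^p\bigr]$ for $1\leq\ell\leq T_r^p$. Because $L_n$ is affine, $\{L_n(J_{p,\ell})\}_{\ell=1}^{T_r^p}$ is precisely the uniform $T_r^p$-partition of $L_n(J)$, so
\[
O_{r,p}(f,L_n(J))=\sum_{\ell=1}^{T_r^p} O(f,L_n(J_{p,\ell})).
\]
Applying Lemma~\ref{lem:3-osci-1} on each $J_{p,\ell}\subset J\subset D_n$ and using $|S_n|<1$ together with the global bound $M_{f,J_{p,\ell}}\leq M_{f,[x_0,x_N]}<\infty$, summation yields
\[
O_{r,p}(f,L_n(J)) \leq O_{r,p}(f,J)+2M_{f,[x_0,x_N]}\,\Var(S_n,J)+\Var(q_n,J),
\]
where the two variation terms are finite by (A2) and (A3). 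Thus $O_{r,p}(f,L_n(J))+1\leq (C+1)\bigl(O_{r,p}(f,J)+1\bigr)$ for a constant $C$ independent of $p$, and feeding this into Lemma~\ref{lem:box-dim-new-x1} (taking $\varlimsup$ and $\varliminf$ of $\log(\cdot)/(p\log T_r)$) proves
\[
\updimB\Gamma f|_{L_n(J)}\leq \updimB\Gamma f|_{J},\qquad \lowdimB\Gamma f|_{L_n(J)}\leq \lowdimB\Gamma f|_{J},
\]
which gives (1) (and the lower-dimension half of (2)).

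For the reverse inequalities needed in (2), the hypothesis that $S_n$ is positive on the closed interval $J$ together with the continuity of $S_n$ provides $c:=\min_{x\in J}S_n(x)>0$. The lower-bound half of Lemma~\ref{lem:3-osci-1} then gives, for every piece,
\[
O(f,L_n(J_{p,\ell}))\geq c\cdot O(f,J_{p,\ell})-2M_{f,[x_0,x_N]}\,O(S_n,J_{p,\ell})-O(q_n,J_{p,\ell}).
\]
Summing and rearranging produces $O_{r,p}(f,J)\leq c^{-1}\bigl(O_{r,p}(f,L_n(J))+C'\bigr)$ with $C'$ independent of $p$, so $O_{r,p}(f,J)+1\leq c^{-1}(C'+c)\bigl(O_{r,p}(f,L_n(J))+1\bigr)$. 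Lemma~\ref{lem:box-dim-new-x1} then yields $\updimB\Gamma f|_{J}\leq\updimB\Gamma f|_{L_n(J)}$ and $\lowdimB\Gamma f|_{J}\leq\lowdimB\Gamma f|_{L_n(J)}$. Combining with (1) completes (2).

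I expect no serious obstacle: the heavy lifting is contained in Lemmas~\ref{lem:box-dim-new-x1} and \ref{lem:3-osci-1}, and the affine nature of $L_n$ is what makes partitions of $J$ correspond to partitions of $L_n(J)$ at the same refinement level, so no reindexing of $p$ is required. The only point demanding care is the separation of the two sides of Lemma~\ref{lem:3-osci-1}: the upper bound (which uses $|S_n|<1$) works unconditionally and produces (1), while the lower bound (which needs a uniform positive lower bound for $|S_n|$) is exactly where the positivity hypothesis in (2) enters.
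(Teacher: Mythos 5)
Your proposal is correct and follows essentially the same route as the paper's proof: apply Lemma~\ref{lem:3-osci-1} piecewise over the uniform $T_r^p$-partition of $J$ (using that the affine $L_n$ carries this partition to the uniform partition of $L_n(J)$), sum to obtain $\lows\, O_{r,p}(f,J)-\xi \leq O_{r,p}(f,L_n(J)) \leq \ups\, O_{r,p}(f,J)+\xi$ with $\xi=2M_{f,J}\Var(S_n,J)+\Var(q_n,J)<\infty$, and then transfer the comparison through the oscillation-sum characterization of Lemma~\ref{lem:box-dim-new-x1}, with the positivity of $S_n$ entering exactly where you say, namely to make $\lows>0$ and reverse the inequalities. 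The only genuine difference is the endgame: the paper converts the two-sided bound into dimension inequalities via Lemma~\ref{lem:osci-box-dim}, which forces an $\vep$-argument and a case split on $\lowdimB(\Gamma f|_J)=1$ versus $>1$ (to keep the lower exponential bound positive), whereas you absorb the additive error $\xi$ into a multiplicative constant on $O_{r,p}(\cdot)+1$, so that the $\varlimsup$ and $\varliminf$ in Lemma~\ref{lem:box-dim-new-x1} are unchanged; this is valid (the constant contributes $\log(C+1)/(p\log T_r)\to 0$), avoids Lemma~\ref{lem:osci-box-dim} entirely, and even yields the extra inequality $\lowdimB(\Gamma f|_{L_n(J)})\leq\lowdimB(\Gamma f|_J)$ already in part (1), which the paper only extracts during the proof of part (2). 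The only cosmetic slip is the claim $O_{r,p}(f,J)+1\leq c^{-1}(C'+c)(O_{r,p}(f,L_n(J))+1)$, which needs $C'+c\geq 1$; replacing $C'+c$ by $\max\{C'+c,1\}$ fixes it without affecting anything.
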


\begin{proof}
	Write $\ups =\max \big\{ \big| S_n(x) \big|: x \in J \big\}$ and 
	$\lows =\min \big\{ \big| S_n(x) \big|: x \in J  \big\}$. 
	
	From Lemma~\ref{lem:3-osci-1}, for any closed interval $E\subset J$, we have
	\begin{align*}
		O(f,L_n(E))&\leq \max_{x\in E} |S_n(x)| \cdot O(f,E)+ 2 M_{f,E} O(S_n,E)+O(q_n,E)\notag\\
		&\leq 	\ups \cdot O(f,E)+ 2 M_{f,J} \Var(S_n,E)+\Var(q_n,E). 
	\end{align*}
    and	
    \begin{align*}
    	O(f,L_n(E))&\geq \min_{x\in E} |S_n(x)| \cdot O(f,E)- 2 M_{f,E} O(S_n,E)-O(q_n,E)\notag\\
    	&\geq 	\lows \cdot O(f,E)- 2 M_{f,J} \Var(S_n,E)-\Var(q_n,E). 
    \end{align*}
	By these two inequalities, it is straightforward to see that
	\begin{equation}\label{eq:Lem4-3-1} 
	    \lows \cdot  O_{r,p}(f,J)-\xi \leq  O_{r,p}(f,L_n(J))\leq \ups \cdot O_{r,p}(f,J)+ \xi, \quad  p \geq 1,
    \end{equation}
	where $\xi=2 M_{f,J} \Var(S_n,J)+\Var(q_n,J)<\infty$.

	Write $\upd= \updimB \big(\Gamma f|_{J}\big)$ and $\lowd=\lowdimB \big(\Gamma f|_{J}\big)$.
	From Lemma~\ref{lem:osci-box-dim}, for any $\vep>0$, there exists $c_\vep>0$ such that for all $p\in \Z^+$,
	\begin{equation}\label{eq:Lem4-3-2}
		c_\vep^{-1} {T_r}^{p ( \lowd -1-\vep)}
		\leq O_{r,p}(f,J) +1
		\leq c_\vep {T_r}^{p ( \upd -1+\vep )}.
	\end{equation}
	Thus, from Lemma~\ref{lem:box-dim-new-x1} and $\upd -1+\vep>0$,
	\begin{align*}
		\updimB \big(\Gamma f|_{L_n(J)}\big) 
		&\leq 1+\varlimsup_{p \to \infty} \frac{\log \big(\ups O_{r,p}(f,J)+\xi+1\big)}{p \log T_r} \\
		&\leq 1+\varlimsup_{p \to \infty} \frac{\log \big(\ups c_\vep  {T_r}^{p ( \upd -1+\vep)}+\xi+1\big)}{p \log T_r} 
		= \upd+\vep .
	\end{align*}
	From the arbitrariness of $\vep$, we have
	\begin{equation}\label{eq:Lem4-3-3}
		 \updimB \big(\Gamma f|_{L_n(J)}\big) \leq  \upd = \updimB \big(\Gamma f|_{J}\big).
	\end{equation}
	Hence, the first statement of this lemma holds.
	
	From now on, we assume that $S_n$ is positive on $J$. Then $\ups \geq \lows >0$. In the case that $\lowd=1$, it is obvious that    \begin{equation}\label{eq:Lem4-3-4}
		\lowdimB \big(\Gamma f|_{L_n(J)}\big)\geq \lowd. 
	\end{equation}
	
	In the case that $\lowd>1$. For any $0<\vep < \lowd-1$, we choose a positive constant $c_\vep$ such that \eqref{eq:Lem4-3-2} holds for all $p\in \bZp$. It is easy to see that for sufficiently large $p$,
	\[
	\lows c_\vep^{-1}  {T_r}^{p ( \lowd -1-\vep)}-\xi+1>0.
	\]
	Thus, from Lemma~\ref{lem:box-dim-new-x1},
	\begin{align*}
		\lowdimB (\Gamma f|_{L_n(J)}) 
		&\geq 1+\varliminf_{p \to \infty} \frac{\log \big(\lows \cdot O_{r,p}(f,J)-\xi+1\big)}{p \log T_r} \\
		&\geq 1+\varliminf_{p \to \infty} \frac{\log \big(\lows c_\vep^{-1}  {T_r}^{p ( \lowd -1-\vep)}-\xi+1\big)}{p \log T_r} 
		= \lowd-\vep .
	\end{align*}
	From the arbitrariness of $\vep$, the inequality \eqref{eq:Lem4-3-4} still holds. Therefore, this equality holds for all $\lowd\geq 1$. Combining this with \eqref{eq:Lem4-3-3}, we have
	\begin{equation}\label{eq:Lem4-3-5}
		\lowd \leq \lowdimB \big(\Gamma f|_{L_n(J)}\big)\leq  \updimB \big(\Gamma f|_{L_n(J)}\big) \leq  \upd .
	\end{equation}

	From \eqref{eq:Lem4-3-1}, we have
	\[
	\ups^{-1} \big(  O_{r,p}(f, L_n(J)) - \xi \big) \leq
	O_{r,p}(f,J) \leq \lows^{-1}  \big(  O_{r,p}(f,L_n(J)) +\xi \big).
	\]
	Using similar arguments in the proof of \eqref{eq:Lem4-3-5}, we can obtain
	\[
	\lowdimB \big(\Gamma f|_{L_n(J)}\big)\leq\lowd\leq  \upd \leq   \updimB \big(\Gamma f|_{L_n(J)}\big)  .
	\]
	Combining this with \eqref{eq:Lem4-3-5}, the second statement of this lemma holds.
\end{proof}

By replacing $J$ with $D_{r,i}^k$ in the above lemma, we immediately obtain the following corollary.
\begin{cor}\label{cor:box-pass}
	Let $1 \leq r \leq m$, $k \geq 1$ and $1 \leq i \leq d_r {T_r}^{k-1}$. Then the following statements hold.
	\begin{enumerate}
		\item 	$ \updimB \big(\Gamma f|_{I_{r,i}^k} \big) \leq  \updimB \big(\Gamma f|_{D_{r,i}^k} \big)$.
		\item  Let $n$ be the unique element in $\{1,\ldots,N\}$ satisfying $I_{r,i}^k\subset I_n$. 
		Assume that $S_n$ is positive on $D_{r,i}^k$. Then 
		\[
		 \updimB \big(\Gamma f|_{I_{r,i}^k} \big)= \updimB \big(\Gamma f|_{D_{r,i}^k} \big)\quad \mbox{and}\quad
		\lowdimB \big(\Gamma f|_{I_{r,i}^k} \big)=\lowdimB \big(\Gamma f|_{D_{r,i}^k} \big).
		\]			
	\end{enumerate}
\end{cor}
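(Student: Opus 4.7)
The plan is to reduce everything to a direct invocation of Lemma~\ref{lem:box-pass} with the interval $J$ chosen to be $D_{r,i}^k$. First I would verify the two purely set-theoretic preconditions needed to apply that lemma. Let $n$ be the unique index in $\{1,\ldots,N\}$ with $I_{r,i}^k\subset I_n$, which exists and is unique by Lemma~\ref{lem:Theta-simple-facts}(1) applied at level $k=1$ (i.e.\ the intervals $I_1,\ldots,I_N$ cover $[x_0,x_N]$ with pairwise disjoint interiors). Then by the very definition $D_{r,i}^k=L_n^{-1}(I_{r,i}^k)$ given in Section~2.2, we have $D_{r,i}^k\subset D_n$ and $L_n(D_{r,i}^k)=I_{r,i}^k$.

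With $J:=D_{r,i}^k$, the interval $J$ is a closed subinterval of $D_n$, and its image under $L_n$ is exactly $I_{r,i}^k$. Applying Lemma~\ref{lem:box-pass}(1) immediately gives
\[
\updimB\bigl(\Gamma f|_{I_{r,i}^k}\bigr)=\updimB\bigl(\Gamma f|_{L_n(J)}\bigr)\leq\updimB\bigl(\Gamma f|_{J}\bigr)=\updimB\bigl(\Gamma f|_{D_{r,i}^k}\bigr),
\]
which is exactly part~(1).

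For part~(2), under the additional hypothesis that $S_n$ is positive on $D_{r,i}^k=J$, Lemma~\ref{lem:box-pass}(2) yields
\[
\updimB\bigl(\Gamma f|_{L_n(J)}\bigr)=\updimB\bigl(\Gamma f|_{J}\bigr),\qquad \lowdimB\bigl(\Gamma f|_{L_n(J)}\bigr)=\lowdimB\bigl(\Gamma f|_{J}\bigr),
\]
and substituting $L_n(J)=I_{r,i}^k$ and $J=D_{r,i}^k$ gives precisely the two claimed equalities.

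There is essentially no obstacle here: the only thing that needs any care is the bookkeeping that $L_n^{-1}(I_{r,i}^k)$ actually lies inside $D_n$ (so that Lemma~\ref{lem:box-pass} applies), but this is immediate from the definition of $D_{r,i}^k$ and the fact that $L_n\colon D_n\to I_n$ is a bijection. Everything else is a direct translation of Lemma~\ref{lem:box-pass} into the notation of basic intervals.
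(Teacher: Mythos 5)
Your proof is correct and is precisely the paper's argument: the corollary is obtained by applying Lemma~\ref{lem:box-pass} with $J=D_{r,i}^k$, using $L_n(D_{r,i}^k)=I_{r,i}^k$ and $D_{r,i}^k\subset D_n$. The extra bookkeeping you include (uniqueness of $n$ and the containment in $D_n$) is harmless and matches what the paper leaves implicit.
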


\subsection{Estimate on upper box dimension of $\Gamma f|_{ B_{r,1} \setminus B_{r,k}}$}

Fix $1 \leq r \leq m$. Recall that $\mB_{r,0}=\{D_n:\, n\in \Lambda_{r}\}$, $\mB_{r,1}=\{I_n:\, n\in \Lambda_r\}$, $B_{r,0}=\cup \mB_{r,0}$ and $B_{r,1}=\cup \mB_{r,1}$. In the case that $ B_{r,0}=B_{r,1}$, we define $\uplam_{r,k}=1$ for all $k \in \Z^+$, otherwise, we define
\begin{align*}
	\uplam_{r,1} &=\max\big\{  \updimB \big(\Gamma f|_{I_i}\big): I_i \subset B_{r,0} \textrm{ and } I_i \not\subset B_{r,1} \big\}, \quad \mbox{and} \\
	\uplam_{r,k} &=\max\big\{ \updimB\big(\Gamma f|_{I_{r,i}^k}\big): I_{r,i}^k\not\in \mB_{r,k}   \big\}, \quad k\geq 2.
\end{align*}

From Corollary~\ref{cor:box-pass}, we can obtain the following result.

\begin{lem}\label{lem:box-up-eks}
	Assume that $1 \leq r \leq m$. Then the following statements hold.
	\begin{enumerate}
		\item We have $\uplam_{r,k} \leq \uplam_{r,1}$ for all $k \geq 2$. That is,
		$$
		\updimB \big(\Gamma f|_{ B_{r,1} \setminus B_{r,k} }\big)\leq \uplam_{r,1}.
		$$
		\item If $S_n$ is positive on $D_n$ for all $n\in \Lambda_r$. Then $\uplam_{r,k}=\uplam_{r,1}$ for all $k \geq 2$. We denote the common value by $\uplam_r$.
	\end{enumerate}
\end{lem}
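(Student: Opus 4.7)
The plan is to prove both assertions by induction on $k$, combined with Corollary~\ref{cor:box-pass} and the explicit description of $D_{r,i}^{k}$ provided by Lemma~\ref{lem:Theta-simple-facts}(5). The trivial case $B_{r,0}=B_{r,1}$ is handled by definition (both $\uplam_{r,k}$ and $\uplam_{r,1}$ equal $1$), so I assume throughout that $B_{r,0}\neq B_{r,1}$. The single observation used in both parts is that $I_{r,i}^{k}\in \mB_{r,k}$ if and only if $D_{r,i}^{k}\in \mB_{r,k-1}$: writing $n$ for the unique index with $I_{r,i}^{k}\subset I_n$, the equality $I_{r,i}^{k}=L_{n'}(E)$ with $E\in \mB_{r,k-1}$ and $E\subset D_{n'}$ forces $n'=n$ (by disjointness of interiors of the $I_{n'}$) and $E=D_{r,i}^{k}$ (by injectivity of $L_n$).

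For part~(1), fix $k\geq 2$ and an $I_{r,i}^{k}\notin \mB_{r,k}$, and apply Corollary~\ref{cor:box-pass}(1) to reduce to bounding $\updimB(\Gamma f|_{D_{r,i}^{k}})$. Lemma~\ref{lem:Theta-simple-facts}(5) tells us that $D_{r,i}^{k}$ lies entirely in a single $I_{n_i}$ and is aligned with the $\varepsilon_{r,k-1}$-lattice. I split on whether $n_i\in \Lambda_r$. When $n_i\notin \Lambda_r$, the interval $I_{n_i}$ satisfies $I_{n_i}\subset D_n\subset B_{r,0}$ and $I_{n_i}\not\subset B_{r,1}$, so the very definition of $\uplam_{r,1}$ together with monotonicity of $\updimB$ under inclusion yields $\updimB(\Gamma f|_{D_{r,i}^{k}})\leq \updimB(\Gamma f|_{I_{n_i}})\leq \uplam_{r,1}$. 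When $n_i\in \Lambda_r$, length and alignment identify $D_{r,i}^{k}=I_{r,j}^{k-1}$ for some $j$; for $k=2$ this forces $j\in \Theta_{r,1}$ and so $D_{r,i}^{2}\in \mB_{r,1}$, contradicting $I_{r,i}^{2}\notin \mB_{r,2}$, which anchors the induction. For $k\geq 3$ the same subcase gives $I_{r,j}^{k-1}\notin \mB_{r,k-1}$, and the inductive hypothesis $\uplam_{r,k-1}\leq \uplam_{r,1}$ completes the bound. Taking the maximum over admissible $i$ closes part~(1).

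For part~(2), the upper bound follows from part~(1). For the matching lower bound, I construct, for every $k\geq 2$, an interval $I_{r,i}^{k}\notin \mB_{r,k}$ whose graph has upper box dimension exactly $\uplam_{r,1}$. Choose $I_{i_0}$ realizing $\uplam_{r,1}$, so $i_0\notin \Lambda_r$ and $I_{i_0}\subset D_{n_0}$ for some $n_0\in \Lambda_r$; using strong connectivity of $\Lambda_r$, select $n_1,n_2,\ldots\in \Lambda_r$ with $I_{n_{t-1}}\subset D_{n_t}$ for every $t\geq 1$. Setting $J_1=I_{i_0}$ and $J_k=L_{n_{k-2}}(J_{k-1})$ for $k\geq 2$, a length and alignment check shows $J_k=I_{r,i}^{k}$ for some $i$, and the equivalence $J_k\in \mB_{r,k}\Leftrightarrow J_{k-1}\in \mB_{r,k-1}$ combined with $J_1=I_{i_0}\notin \mB_{r,1}$ inductively yields $J_k\notin \mB_{r,k}$. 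Finally, Corollary~\ref{cor:box-pass}(2) applied iteratively (using $S_{n_{k-2}}>0$ on $D_{n_{k-2}}\supset J_{k-1}$) gives $\updimB(\Gamma f|_{J_k})=\updimB(\Gamma f|_{I_{i_0}})=\uplam_{r,1}$, so $\uplam_{r,k}\geq \uplam_{r,1}$.

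The main technical obstacle, beyond the straightforward case analysis, is the alignment bookkeeping that $D_{r,i}^{k}$ in the $n_i\in \Lambda_r$ subcase of part~(1), and each $J_k$ in part~(2), is literally one of the basic intervals $I_{r,j}^{k-1}$ or $I_{r,i}^{k}$; this is where the uniform contraction condition (A4) and the explicit formula in Lemma~\ref{lem:Theta-simple-facts}(5) do all the real work, while conditions (A1)--(A3) are not invoked. Once the alignment is in hand, the remainder is just the dimension-preservation statements of Corollary~\ref{cor:box-pass}.
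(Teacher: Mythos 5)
Your proposal is correct. Part (1) is essentially the paper's own argument: the same reduction via Corollary~\ref{cor:box-pass}(1) to $D_{r,i}^k$, the same dichotomy according to whether the interval $I_{n_i}$ containing $D_{r,i}^k$ has $n_i\in\Lambda_r$ (equivalently, whether $D_{r,i}^k\subset B_{r,1}$), and the same identification $D_{r,i}^k=I_{r,j}^{k-1}$ from Lemma~\ref{lem:Theta-simple-facts}(5); the paper phrases it as $\uplam_{r,k+1}\leq\max\{\uplam_{r,k},\uplam_{r,1}\}$ rather than your descent with a $k=2$ anchor, but that is only a cosmetic difference (and your observation that the $n_i\in\Lambda_r$ subcase is vacuous at $k=2$ is exactly what makes the paper's $k=1$ step degenerate). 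Part (2) genuinely differs. The paper proves the reverse inequality by monotonicity: for $k\geq 2$, $\uplam_{r,k}\leq\uplam_{r,k+1}$ comes for free from $B_{r,1}\setminus B_{r,k}\subset B_{r,1}\setminus B_{r,k+1}$, and positivity of $S_n$ is used only once, at the single step $\uplam_{r,1}\leq\uplam_{r,2}$, via Corollary~\ref{cor:box-pass}(2) applied to $I_i=D_{r,j}^2$. You instead take a maximizing interval $I_{i_0}$ and push it forward along an infinite edge-chain $n_0,n_1,\ldots$ in $\Lambda_r$ (such a chain exists by the same outgoing-edge property the paper invokes to get $B_{r,1}\subset B_{r,0}$), checking that each image $J_k$ is a level-$k$ interval not in $\mB_{r,k}$ and that Corollary~\ref{cor:box-pass}(2) preserves its upper box dimension at every step. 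This is heavier on bookkeeping (the alignment of $J_k$ and the non-membership propagation must be verified at every level, though both follow from the same elementary observation you state at the outset, which is also implicit in the paper), but it is more constructive: it exhibits, for each $k$, an explicit non-basic interval attaining $\uplam_{r,1}$, whereas the paper's route only compares suprema. One small caveat to your closing remark: condition (A1) is not dispensable — the equal spacing of the $x_n$ is exactly what makes the alignment bookkeeping (and Lemma~\ref{lem:Theta-simple-facts}(5) itself) work.
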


\begin{proof}
	Without loss of generality, we assume that $B_{r,0}\not=B_{r,1}$. 
	
	First, we prove (1). Fix $k\geq 1$. For any $I_{r,j}^{k+1}\not\in \mB_{r,k+1}$, let $n\in \{1,\ldots,N\}$ be the unique element satisfying $I_{r,j}^{k+1}\subset I_n$.
	In the case that $D_{r,j}^{k+1} \not\subset B_{r,1}$, there exists $t \notin \Lambda_r$ such that $D_{r,j}^{k+1} \subset I_t$. From Corollary~\ref{cor:box-pass}, we have
	\[
	\updimB \big(\Gamma f |_{I_{r,j}^{k+1}}\big) \leq \updimB \big(\Gamma f |_{D_{r,j}^{k+1}}\big)
	\leq \updimB \big(\Gamma f |_{I_t}\big) \leq \uplam_{r,1}.
	\]
	
	In the case that $D_{r,j}^{k+1} \subset B_{r,1}$, by using \eqref{eq:Theta-simple-facts-1}, it is easy to see that there exists $i\in\{1,\ldots, d_r{T_r}^{k-1}\}$ such that $I_{r,i}^k=D_{r,j}^{k+1}$. From $L_n(I_{r,i}^k)=I_{r,j}^{k+1}\not\in \mB_{r,k+1}$, we have $I_{r,i}^k\not\in\mB_{r,k}$. Combining this with Corollary~\ref{cor:box-pass}, 
	\[
	\updimB \big(\Gamma f |_{I_{r,j}^{k+1}}\big) \leq \updimB \big(\Gamma f |_{D_{r,j}^{k+1}}\big)= \updimB \big(\Gamma f |_{I_{r,i}^{k}}\big)\leq \uplam_{r,k}.
	\]
	
	From the arbitrariness of $j$, we have $\uplam_{r,k+1}\leq \max\{ \uplam_{r,k},\uplam_{r,1}\}$.
	By the arbitrariness of $k$, we have $\uplam_{r,k} \leq \uplam_{r,1}$ for all $k\geq 2$.
	
	Now we prove (2). Assume that $S_n$ is positive on $D_n$ for all $n\in \Lambda_r$. From (1), it suffices to show that $\uplam_{r,k}\leq \uplam_{r,k+1}$ for all $k\geq 1$. In the case that $k\geq 2$, we have by definition that
	\[
	\uplam_{r,k}=\updimB\big( \Gamma f|_{ B_{r,1} \setminus B_{r,k} }\big).
	\]
	Combining this with the fact that $B_{r,k+1} \subset B_{r,k}$, we obtain that $\uplam_{r,k}\leq \uplam_{r,k+1}$ for all $k\geq 2$.
	
	In the case that $k=1$, for any $i$ with $I_i \subset B_{r,0}$ and $I_i \not\subset B_{r,1}$ , we have $i\not\in \Lambda_r$ and there exists $n\in \Lambda_r$ satisfying $I_i\subset D_n$. Let $j$ be the unique element in $\{1,\ldots, d_rT_r\}$ such that $I_{r,j}^2=L_n(I_i)$. Then $I_i=D_{r,j}^2 $. From $i\not\in \Lambda_r$, we have $I_i\not \in \mB_{r,1}$ so that $I_{r,j}^2=L_n(I_i)\not\in \mB_{r,2}$. 
	Combining this with Corollary~\ref{cor:box-pass}, 
	\begin{equation*}%\label{eq:updimb-pass-lambda}
		\updimB \big(\Gamma f|_{I_i} \big)=\updimB \big(\Gamma f|_{D_{r,j}^2} \big)=\updimB \big(\Gamma f|_{I_{r,j}^2} \big)\leq \uplam_{r,2}.
	\end{equation*}
	From the arbitrariness of $I_i$, $\uplam_{r,1} \leq  \uplam_{r,2}$.
\end{proof}

\section{Box dimension of generalized affine RFIFs}
\subsection{Oscillation sum of generalized affine RFIFs}

Recall that $M_{f,J}=\sup_{x \in J}|f(x)|$ for any $J\subset [x_0,x_N]$. 
By using $\ups_{r,i,j}^{k}$ and $\lows_{r,i,j}^{k}$, we can obtain the following inequalities on the oscillation sum of $f$.

\begin{lem}\label{lem:2-3}
	Let $1\leq r\leq m$, $k \in \Z^+$ and $i\in \Theta_{r,k}$. 	
	Assume that $1 \leq n \leq N$ is the unique element satisfying $I_{r,i}^k \subset I_{n}$.
	Then for any $p \in \mathbb{Z}^+$, we have
	\begin{align}
		&\sum_{i'\in \Theta_{r,k+1}:\, I_{r,i'}^{k+1}\subset I_{r,i}^k} O_{r,p}(f,I_{r,i'}^{k+1}) \leq \sum_{j\in\Theta_{r,k}} \ups_{r,i,j}^k O_{r,p}(f,I_{r,j}^k)+\xi_{r,k,i}, \label{eq:lem2-3-1}\\
		&\sum_{i'\in \Theta_{r,k+1}:\, I_{r,i'}^{k+1}\subset I_{r,i}^k} O_{r,p}(f,I_{r,i'}^{k+1}) \geq \sum_{j\in \Theta_{r,k}} \lows_{r,i,j}^k O_{r,p}(f,I_{r,j}^k)-\xi_{r,k,i}, \label{eq:lem2-3-2}
	\end{align}
	where $\xi_{r,k,i}=2 M_{f,B_{r,k}} \Var (S_{n},D_{r,i}^k)+\Var (q_{n},D_{r,i}^k)$.
\end{lem}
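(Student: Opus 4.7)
The plan is to reduce the problem to a single application of Lemma~\ref{lem:3-osci-1} on each level-$k$ basic subinterval of $D_{r,i}^k$. First I would identify the correct indexing set. Because $I_{r,i}^k\subset I_n$ and $L_n$ is an affine bijection from $D_n$ onto $I_n$ with ratio $1/T_r$ (condition (A4)), the map $E\mapsto L_n(E)$ sends the collection of $E\in\mathcal{B}_{r,k}$ with $E\subset D_{r,i}^k=L_n^{-1}(I_{r,i}^k)$ bijectively onto the collection of elements of $\mathcal{B}_{r,k+1}$ contained in $I_{r,i}^k$. In indices, this is a bijection between $\{j\in\Theta_{r,k}:I_{r,j}^k\subset D_{r,i}^k\}$ and $\{i'\in\Theta_{r,k+1}:I_{r,i'}^{k+1}\subset I_{r,i}^k\}$, with $L_n(I_{r,j}^k)=I_{r,i'}^{k+1}$. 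Under this pairing, the canonical partition of $I_{r,j}^k$ into $T_r^p$ equal subintervals $E_1,\ldots,E_{T_r^p}$ is sent by $L_n$ precisely to the canonical partition of $I_{r,i'}^{k+1}$ into $T_r^p$ equal subintervals, since $L_n$ is affine.

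Next, I apply Lemma~\ref{lem:3-osci-1} to each pair $(L_n,E_\ell)$. Choosing in that lemma a point $x\in E_\ell$ at which $|S_n|$ attains its maximum on $E_\ell$ yields
\[
O(f,L_n(E_\ell))\leq \max_{x\in E_\ell}|S_n(x)|\cdot O(f,E_\ell)+2M_{f,E_\ell}\,O(S_n,E_\ell)+O(q_n,E_\ell),
\]
and choosing the minimizer of $|S_n|$ on $E_\ell$ gives the analogous lower bound. Since $E_\ell\subset I_{r,j}^k$, the prefactor is bounded above by $\overline{s}_{r,i,j}^k$ (respectively below by $\underline{s}_{r,i,j}^k$) by the very definition of these quantities. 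Summing the resulting inequalities over $\ell=1,\ldots,T_r^p$ and recognizing $\sum_\ell O(f,E_\ell)=O_{r,p}(f,I_{r,j}^k)$ and $\sum_\ell O(f,L_n(E_\ell))=O_{r,p}(f,I_{r,i'}^{k+1})$ gives, for each admissible $j$,
\[
\overline{s}_{r,i,j}^k O_{r,p}(f,I_{r,j}^k)-\eta_{r,j}\leq O_{r,p}(f,I_{r,i'}^{k+1})\leq \overline{s}_{r,i,j}^k O_{r,p}(f,I_{r,j}^k)+\eta_{r,j},
\]
where I use the inequality $\sum_\ell O(\cdot,E_\ell)\leq \operatorname{Var}(\cdot,I_{r,j}^k)$ (and analogously with $\underline{s}_{r,i,j}^k$) to replace sums of oscillations of $S_n$ and $q_n$ by their total variations on $I_{r,j}^k$, and I absorb the $M_{f,E_\ell}$ factor into $M_{f,B_{r,k}}$ since $I_{r,j}^k\subset B_{r,k}$.

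Finally, I sum these per-$j$ bounds over $j\in\Theta_{r,k}$ with $I_{r,j}^k\subset D_{r,i}^k$. The left-hand side becomes exactly $\sum_{i'\in\Theta_{r,k+1},\,I_{r,i'}^{k+1}\subset I_{r,i}^k} O_{r,p}(f,I_{r,i'}^{k+1})$ by the bijection of the first paragraph. The sum of variations of $S_n$ (respectively $q_n$) over the disjoint-interiors subintervals $I_{r,j}^k$ of $D_{r,i}^k$ is dominated by $\operatorname{Var}(S_n,D_{r,i}^k)$ (respectively $\operatorname{Var}(q_n,D_{r,i}^k)$), which collapses all error terms into the single quantity $\xi_{r,k,i}$. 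Extending the $j$-sum to the whole of $\Theta_{r,k}$ on the right-hand side costs nothing because $\overline{s}_{r,i,j}^k=\underline{s}_{r,i,j}^k=0$ whenever $I_{r,j}^k\not\subset D_{r,i}^k$, yielding \eqref{eq:lem2-3-1} and \eqref{eq:lem2-3-2}.

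The main obstacle is not analytic but combinatorial and bookkeeping: verifying that the $L_n$-image of the natural $T_r^p$-partition of $I_{r,j}^k$ coincides with the natural $T_r^p$-partition of $I_{r,i'}^{k+1}$ (which needs (A4) and the affine nature of $L_n$), and checking that the intervals $I_{r,j}^k$ with $I_{r,j}^k\subset D_{r,i}^k$ do have pairwise disjoint interiors (Lemma~\ref{lem:Theta-simple-facts}(1)) so that the variation bound $\sum_j \operatorname{Var}(\cdot,I_{r,j}^k)\leq \operatorname{Var}(\cdot,D_{r,i}^k)$ is valid. Once these are in place, the inequalities are a one-step amplification of Lemma~\ref{lem:3-osci-1}.
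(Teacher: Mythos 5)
Your proposal is correct and follows essentially the same route as the paper: the paper simply cites the already-established inequality \eqref{eq:Lem4-3-1} (whose derivation from Lemma~\ref{lem:3-osci-1} is exactly your partition argument) with $J=I_{r,j}^k$ for each $j\in\Theta_{r,k}$ with $I_{r,j}^k\subset D_{r,i}^k$, then uses the same bijection $j\leftrightarrow i'$, the bound $M_{f,I_{r,j}^k}\leq M_{f,B_{r,k}}$, the variation subadditivity over the disjoint-interior subintervals of $D_{r,i}^k$, and the vanishing of $\ups_{r,i,j}^k$, $\lows_{r,i,j}^k$ off $D_{r,i}^k$. The only blemish is a typo in your displayed per-$j$ inequality, where the lower bound should carry $\underline{s}_{r,i,j}^k$ rather than $\overline{s}_{r,i,j}^k$, as your surrounding text already indicates.
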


\begin{proof}
	For each $i'\in \Theta_{r,k+1}$ satisfying $I_{r,i'}^{k+1} \subset I_{r,i}^k$, there exists $j\in \Theta_{r,k}$ such that $I_{r,j}^k=D_{r,i'}^{k+1}\in \mB_{r,k}$. 
	Notice that $I_{r,j}^k=D_{r,i'}^{k+1}\subset D_{r,i}^k$. Thus, by letting $J=I_{r,j}^k$ in \eqref{eq:Lem4-3-1}, we have
	\begin{align*}%\label{eq:Lemma2-3-3}
		O_{r,p}(f,L_n(I_{r,j}^k)) \leq \ups_{r,i,j}^k O_{r,p}(f,I_{r,j}^k)+ 2 M_{f,B_{r,k}} \Var(S_n,I_{r,j}^k)+\Var(q_n,I_{r,j}^k),
	\end{align*}
	where we use the fact that $M_{f,I_{r,j}^k}\leq M_{f,B_{r,k}}$.	Hence,
	\begin{align*}
		&\sum_{i'\in \Theta_{r,k+1}:\, I_{r,i'}^{k+1}\subset I_{r,i}^k} O_{r,p}(f,I_{r,i'}^{k+1}) \\
		= &\sum_{j\in \Theta_{r,k}:\, I_{r,j}^k\subset D_{r,i}^k}O_{r,p}(f,L_n(I_{r,j}^k)) \\
		\leq &\sum_{j\in \Theta_{r,k}:\, I_{r,j}^k\subset D_{r,i}^k} \ups_{r,i,j}^k O_{r,p}(f,I_{r,j}^k)+2 M_{f,B_{r,k}} \Var (S_{n},D_{r,i}^k)+\Var (q_{n}, D_{r,i}^k)\\
		= &\sum_{j\in \Theta_{r,k}} \ups_{r,i,j}^k O_{r,p}(f,I_{r,j}^k)+2 M_{f,B_{r,k}} \Var (S_{n},D_{r,i}^k)+\Var (q_{n}, D_{r,i}^k).
	\end{align*}
	Thus, \eqref{eq:lem2-3-1} holds. Using similar arguments, \eqref{eq:lem2-3-2} holds.
\end{proof}

%	From Lemma~\ref{lem:3-osci-1}, for any closed interval $J\subset I_{r,j}^k$, we have
%\begin{align*}
%	O(f,L_n(J))&\leq \max_{x\in J} |S_n(x)| \cdot O(f,J)+ 2 M_f O(S_n,J)+O(q_n,J)\notag\\
%	&\leq 	\ups_{r,i,j}^k O(f,J)+ 2 M_f \Var(S_n,J)+\Var(q_n,J). 
%\end{align*}
%By this inequality, it is straightforward to see that

Now we define $V(f,r,k,p)$ to be the transpose of the vector $\big(O_{r,p}(f,I_{r,i}^k)\big)_{i\in \Theta_{r,k}}$, i.e., 
\[
\big(V(f,r,k,p)\big)_i=O_{r,p}(f,I_{r,i}^k\big), \quad i\in \Theta_{r,k}.
\]
We also define $\xi_{r,k}$ to be the transpose of the vector $\big(\xi_{r,k,i}\big)_{i\in \Theta_{r,k}}$. It is obvious that
\begin{equation}\label{eq:xiup}
	\lVert \xi_{r,k}\rVert_1 = \sum_{i\in\Theta_{r,k}}  \xi_{r,k,i} \leq\sum_{n \in \Lambda_r}\Big(2M_{f,B_{r,k}} \Var (S_n,D_n)+\Var(q_n,D_n)\Big),
\end{equation}
where $\lVert \bfv\rVert_1:=\sum_{i=1}^q |v_i|$ for any $\bfv=(v_1,\ldots,v_q)\in \bR^q$.

Similarly, we define 
\[
  O_{r,p}^*(f,I_{r,i}^k)=\sum_{i':\, I_{r,i'}^{k+1}\subset I_{r,i}^k\setminus (B_{r,k+1})^{\circ}} O_{r,p}(f,I_{r,i'}^{k+1}), \quad i\in \Theta_{r,k},
\]
and let $\widetilde{V}(f,r,k,p)$ be the transposition of the vector $\big(O_{r,p}^*(f,I_{r,i}^k)\big)_{i\in \Theta_{r,k}}$.

From Lemma~\ref{lem:2-3}, the following result holds.
\begin{thm}\label{thm:vsm-pass-tmp}
	Let $1 \leq r \leq m$. For any $k,p \in \mathbb{Z}^+$, we have
	\begin{align}
		V(f,r,k,p+1) \geq &\big(\lowM_{r,k}|_{\Theta_{r,k}}\big) V(f,r,k,p) +\widetilde{V}(f,r,k,p)  -\xi_{r,k},\label{eq:3-vector-pass-1}\\
		V(f,r,k,p+1) \leq &\big(\upM_{r,k}|_{\Theta_{r,k}}\big)  V(f,r,k,p)+ \widetilde{V}(f,r,k,p)+\xi_{r,k}.\label{eq:3-vector-pass-2}
	\end{align}
\end{thm}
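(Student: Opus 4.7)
The plan is to reduce everything to Lemma~\ref{lem:2-3} by first establishing a clean recursive identity for the oscillation sum that passes from level $k+1$ down to level $k$.

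First, I would record the following purely combinatorial fact about the oscillation sum: since each interval $I_{r,i}^k$ decomposes into the $T_r$ equal-length subintervals $I_{r,i'}^{k+1}$ with $i'=(i-1)T_r+1,\ldots,iT_r$, and each such child is in turn subdivided into $T_r^p$ equal pieces when one forms $O_{r,p}(f,I_{r,i'}^{k+1})$, the $T_r \cdot T_r^p = T_r^{p+1}$ resulting subintervals coincide with the uniform partition of $I_{r,i}^k$ used to define $O_{r,p+1}(f,I_{r,i}^k)$. Therefore
\[
 O_{r,p+1}(f,I_{r,i}^k) \;=\; \sum_{i'=(i-1)T_r+1}^{iT_r} O_{r,p}(f,I_{r,i'}^{k+1}).
\]
This is the only place where the structure of the oscillation sum is used and it is really just bookkeeping.

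Next I would split the $T_r$ children of $I_{r,i}^k$ into two groups: those $i'$ with $i'\in\Theta_{r,k+1}$ and those with $i'\notin\Theta_{r,k+1}$. For the second group, Lemma~\ref{lem:Theta-simple-facts}(2) gives $(I_{r,i'}^{k+1}\cap B_{r,k+1})^\circ = \emptyset$, so $I_{r,i'}^{k+1}\cap(B_{r,k+1})^\circ = \emptyset$ and hence $I_{r,i'}^{k+1}\subset I_{r,i}^k\setminus (B_{r,k+1})^\circ$. Conversely, if $i'\in\Theta_{r,k+1}$ then $I_{r,i'}^{k+1}\subset B_{r,k+1}$ has nonempty interior inside $(B_{r,k+1})^\circ$, so it contributes to the first group only. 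Consequently
\[
 O_{r,p+1}(f,I_{r,i}^k) \;=\; \!\!\sum_{\substack{i'\in\Theta_{r,k+1}\\ I_{r,i'}^{k+1}\subset I_{r,i}^k}}\!\! O_{r,p}(f,I_{r,i'}^{k+1}) \;+\; O_{r,p}^*(f,I_{r,i}^k),
\]
where the last term is, by definition, $\big(\widetilde V(f,r,k,p)\big)_i$.

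Finally, I would apply Lemma~\ref{lem:2-3} to the first sum on the right. The upper estimate of Lemma~\ref{lem:2-3} gives
\[
 O_{r,p+1}(f,I_{r,i}^k) \;\leq\; \sum_{j\in\Theta_{r,k}} \ups_{r,i,j}^{k}\, O_{r,p}(f,I_{r,j}^k) + \xi_{r,k,i} + O_{r,p}^*(f,I_{r,i}^k),
\]
which is exactly the $i$-th coordinate of the vector inequality \eqref{eq:3-vector-pass-2}; the lower estimate of Lemma~\ref{lem:2-3} yields the $i$-th coordinate of \eqref{eq:3-vector-pass-1} in the same way. Since $i\in\Theta_{r,k}$ is arbitrary, both vector inequalities follow.

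I do not expect a genuine obstacle here; the only subtlety is verifying that the partition of the $T_r$ children $I_{r,i'}^{k+1}$ according to membership in $\Theta_{r,k+1}$ matches the decomposition used in defining $\widetilde V(f,r,k,p)$, and this is handled cleanly by Lemma~\ref{lem:Theta-simple-facts}(2). Everything else is a direct assembly of the previously proved componentwise estimates.
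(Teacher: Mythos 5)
Your proposal is correct and follows essentially the same route as the paper: decompose $O_{r,p+1}(f,I_{r,i}^k)$ over the $T_r$ children, split them into those indexed by $\Theta_{r,k+1}$ and those absorbed into $O_{r,p}^*(f,I_{r,i}^k)$ via Lemma~\ref{lem:Theta-simple-facts}(2), and then apply Lemma~\ref{lem:2-3} componentwise. Your explicit verification that the two groups are disjoint (so the split is an exact equality) is a slightly more careful rendering of the paper's ``either--or'' step, but it is the same argument.
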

\begin{proof}
	It is clear that for any $1\leq r\leq m$, $k,p\in \Z^+$ and any $i\in \Theta_{r,k}$, 
	\[
	O_{r,p+1}(f,I_{r,i}^k)=\sum_{i':\, I_{r,i'}^{k+1} \subset I_{r,i}^k} O_{r,p}(f,I_{r,i'}^{k+1}).
	\]

	Notice that for any $1 \leq i' \leq d_r {T_r}^k$ such that $I_{r,i'}^{k+1} \subset I_{r,i}^k$, we have either $ i' \in \Theta_{r,k+1}$ or $I_{r,i'}^{k+1}\subset I_{r,i}^k\setminus (B_{r,k+1})^\circ$.	Thus, from Lemma~\ref{lem:2-3}, 
	\[
	  O_{r,p+1}(f,I_{r,i}^k)\leq \sum_{j\in\Theta_{r,k}} \ups_{r,i,j}^k O_{r,p}(f,I_{r,j}^k)+\xi_{r,k,i} + O_{r,p}^*(f,I_{r,i}^k),
	\]
	which implies that \eqref{eq:3-vector-pass-2} holds. Similarly, we know that \eqref{eq:3-vector-pass-1} holds.
\end{proof}

\subsection{Upper box dimension of $\Gamma f|_{B_{r,k}}$ }

Using the finite stability of box dimension, we have the following result.
\begin{lem}\label{lem:box-def-3}
	Let $1 \leq r \leq m$. For any $k,p \in \mathbb{Z}^+$, we have
	\begin{align}
		\lowdimB \big(\Gamma f|_{B_{r,k}} \big) & = 1+\varliminf_{p \to \infty} \frac{\log \big(\lVert V(f,r,k,p) \rVert_1+1\big)}{p \log T_r}, \label{eq:vec-dim-pass-1}  \\
		\updimB \big(\Gamma f|_{B_{r,k}} \big)
		&= 1+\varlimsup_{p \to \infty} \frac{\log \big(\lVert V(f,r,k,p) \rVert_1+1\big)}{p \log T_r}. \label{eq:vec-dim-pass-2} 
	\end{align}
	%where $|| \cdot||$ represent the sum norm (or $\ell_1-$norm).
\end{lem}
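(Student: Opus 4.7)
The plan is to extend the column-by-column covering argument from the proof of Lemma~\ref{lem:box-dim-new-x1} from a single basic interval to the finite union $B_{r,k}=\bigcup_{i\in\Theta_{r,k}} I_{r,i}^k$. Set $\varepsilon_p=(x_N-x_0)/(NT_r^{k-1+p})$, so that each $I_{r,i}^k$ has length $T_r^p\varepsilon_p$ and, by (A1), its left endpoint differs from $x_0$ by an integer multiple of $\varepsilon_p$. Consequently the generalized $\varepsilon_p$-coordinate grid with center $(x_0,0)$ partitions each $I_{r,i}^k$ into exactly $T_r^p$ vertical columns of width $\varepsilon_p$, and columns coming from distinct $I_{r,i}^k$'s are pairwise disjoint.

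Applying \eqref{eq:boxxx1} to each $\Gamma f|_{I_{r,i}^k}$ at scale $\varepsilon_p$, and using $|I_{r,i}^k|=T_r^p\varepsilon_p$, I obtain
\[
  \tfrac{1}{2}\bigl(\varepsilon_p^{-1}O_{r,p}(f,I_{r,i}^k)+T_r^p\bigr)\;\leq\;\mathcal{N}_{\varepsilon_p}^{(x_0,0)}(\Gamma f|_{I_{r,i}^k})\;\leq\;\varepsilon_p^{-1}O_{r,p}(f,I_{r,i}^k)+2T_r^p.
\]
Summing over $i\in\Theta_{r,k}$ and accounting for the at most $\card(\Theta_{r,k})-1$ shared endpoints (each producing a bounded overlap correction) then yields
\[
  \tfrac{1}{2}\bigl(\varepsilon_p^{-1}\lVert V(f,r,k,p)\rVert_1+\card(\Theta_{r,k})T_r^p\bigr)-\card(\Theta_{r,k})\leq\mathcal{N}_{\varepsilon_p}^{(x_0,0)}(\Gamma f|_{B_{r,k}})\leq\varepsilon_p^{-1}\lVert V(f,r,k,p)\rVert_1+2\card(\Theta_{r,k})T_r^p.
\]
Since $\card(\Theta_{r,k})T_r^p=C_{r,k}\varepsilon_p^{-1}$ with $C_{r,k}=\card(\Theta_{r,k})(x_N-x_0)/(NT_r^{k-1})$ a constant independent of $p$, the two sides combine into $\mathcal{N}_{\varepsilon_p}(\Gamma f|_{B_{r,k}})\asymp\varepsilon_p^{-1}\bigl(\lVert V(f,r,k,p)\rVert_1+1\bigr)$, with implied constants depending only on $r$ and $k$.

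To conclude, since $\varepsilon_p=\alpha T_r^{-p}$ with $\alpha=(x_N-x_0)/(NT_r^{k-1})$ is a geometric sequence, the standard reduction to geometric subsequences recalled after \eqref{eq:box-dim-def} permits computing $\updimB\Gamma f|_{B_{r,k}}$ and $\lowdimB\Gamma f|_{B_{r,k}}$ directly from $\{\mathcal{N}_{\varepsilon_p}\}_{p\geq 1}$. Taking logarithms in the asymptotic above, dividing by $-\log\varepsilon_p=p\log T_r+O(1)$, and passing to $\varlimsup$ and $\varliminf$ as $p\to\infty$ yields \eqref{eq:vec-dim-pass-1} and \eqref{eq:vec-dim-pass-2} in a single stroke.

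The main step requiring care is the summation: one must check that the count for $\bigcup_i \Gamma f|_{I_{r,i}^k}$ is comparable to $\sum_i$ of the individual counts without essential over-counting. This rests entirely on the grid-alignment observation above: every endpoint of every $I_{r,i}^k$ sits on a vertical grid line, so two adjacent intervals either share an endpoint (creating at most one doubly-counted square per endpoint, totalling $O(\card(\Theta_{r,k}))$) or are separated by a positive gap (in which case no covering square touches both). Since the correction is independent of $p$, it dissolves after dividing by $p\log T_r$. An alternative route for \eqref{eq:vec-dim-pass-2} alone would be to invoke finite stability of $\updimB$ together with Lemma~\ref{lem:box-dim-new-x1} and the elementary estimate $\max_i a_i\leq\sum_i a_i\leq\card(\Theta_{r,k})\max_i a_i$, but since $\lowdimB$ fails to be finitely stable, the direct counting above handles both identities uniformly.
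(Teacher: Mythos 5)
Your proposal is correct and follows essentially the same route as the paper: apply the two-sided count \eqref{eq:boxxx1} to each $\Gamma f|_{I_{r,i}^k}$ at the common scale $\varepsilon_p=(x_N-x_0)N^{-1}T_r^{-k-p+1}$, use grid alignment to add the counts over $i\in\Theta_{r,k}$ (the paper simply asserts additivity of $\mathcal{N}_{\varepsilon_p}^{(x_0,0)}$, while you track the $p$-independent endpoint correction, which is harmless either way), and then pass to the limit exactly as in Lemma~\ref{lem:box-dim-new-x1}. No gaps.
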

\begin{proof}
  Fix $p\in \Z^+$. For each $i\in \Theta_{r,k}$, by letting $J= I_{r,i}^k$, $g=f|_{I_{r,i}^k}$ and $a=x_0$ in \eqref{eq:boxxx1}, we can obtain
	\begin{equation*}%\label{eq:tmp-n-count}
		\max \big\{ | I_{r,i}^k |, O_{r,p}(f,{I_{r,i}^k})  \big\}
		\leq \vep_{p} \cdot \mathcal{N}_{\vep_{p}}^{(x_0,0)} \big(\Gamma f|_{I_{r,i}^k}\big) 
		\leq     O_{r,p}(f,{I_{r,i}^k}) +2 |{I_{r,i}^k}|,
	\end{equation*}
	where $\vep_{p}=| I_{r,i}^k |/{T_r}^p=(x_N-x_0)N^{-1}{T_r}^{-k-p+1}$.
	Notice that
	\[
	  \mathcal{N}_{\vep_p}^{(x_0,0)} (\Gamma f|_{B_{r,k}}) =\sum_{i \in \Theta_{r,k}}
	  \mathcal{N}_{\vep_p}^{(x_0,0)} (\Gamma f|_{I_{r,i}^k}).
	\]
	Thus, from $\lVert V(f,r,k,p) \rVert_1=\sum_{i \in \Theta_{r,k}} O_{r,p}(f,{I_{r,i}^k})$, 
	\[
    	\max\big\{\beta_{r,k}, \lVert V(f,r,k,p) \rVert_1  \big\}
    	\leq
	    \vep_{p} \cdot\mathcal{N}_{\vep_{p}}^{(x_0,0)} \big(\Gamma f|_{B_{r,k}}\big) 
	    \leq \lVert V(f,r,k,p) \rVert_1+2\beta_{r,k},
	\]
	where $\beta_{r,k}= (x_N-x_0)N^{-1}{T_r}^{-k+1}\card  (\Theta_{r,k})$. 
	Using similar arguments in Lemma~\ref{lem:box-dim-new-x1},
	we know that \eqref{eq:vec-dim-pass-1} and \eqref{eq:vec-dim-pass-2} hold. 
\end{proof}

\begin{lem}\label{lem:vsm-pass-tmp-2}
	Let $1 \leq r \leq m$.  
	Then for any $k\geq 1$ and $\vep>0$, there exists $c_{\vep,r,k}>0$ such that for all $p \in \mathbb{Z}^+$, 
	\begin{equation*}%\label{eq:vsm-pass-tmp-2}
		V(f,r,k,p+1) \leq (\upM_{r,k}|_{\Theta_{r,k}}) V(f,r,k,p)+c_{\vep,r,k} {T_r}^ {p (\uplam_{r,1}-1+\vep)} \bfe_{r,k},
	\end{equation*}
	where $\bfe_{r,k} = (1,\ldots,1)^{T} \in \mathbb{R}^{\card (\Theta_{r,k})}$.
\end{lem}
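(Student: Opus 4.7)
}
The plan is to start from the coordinate-wise inequality \eqref{eq:3-vector-pass-2} of Theorem~\ref{thm:vsm-pass-tmp},
\[
V(f,r,k,p+1) \leq (\upM_{r,k}|_{\Theta_{r,k}})\, V(f,r,k,p)+\widetilde{V}(f,r,k,p)+\xi_{r,k},
\]
and to dominate the ``residual'' vector $\widetilde{V}(f,r,k,p)+\xi_{r,k}$ by a scalar multiple of $T_r^{p(\uplam_{r,1}-1+\vep)}\bfe_{r,k}$. The vector $\xi_{r,k}$ is independent of $p$ and its entries are bounded by the constant appearing in \eqref{eq:xiup}, so absorbing it into the final upper bound is routine; the real work is to control $\widetilde{V}(f,r,k,p)$.

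The key observation is the following identification of the intervals contributing to $\widetilde{V}(f,r,k,p)$. For each $i\in\Theta_{r,k}$, every $i'$ appearing in the sum that defines $O_{r,p}^*(f,I_{r,i}^k)$ satisfies $I_{r,i'}^{k+1}\subset I_{r,i}^k\setminus (B_{r,k+1})^{\circ}$. Such an interval cannot belong to $\mB_{r,k+1}$, because any member of $\mB_{r,k+1}$ has nonempty interior contained in $(B_{r,k+1})^{\circ}$. Hence $I_{r,i'}^{k+1}\notin\mB_{r,k+1}$, and by the definition of $\uplam_{r,k+1}$ together with Lemma~\ref{lem:box-up-eks}(1),
\[
\updimB\bigl(\Gamma f|_{I_{r,i'}^{k+1}}\bigr)\leq \uplam_{r,k+1}\leq \uplam_{r,1}.
\]

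Given this, I would apply Lemma~\ref{lem:osci-box-dim} to $g=f|_{I_{r,i'}^{k+1}}$ on $J=I_{r,i'}^{k+1}$ to produce, for each such $i'$, a constant with
\[
O_{r,p}(f,I_{r,i'}^{k+1})+1 \leq c_{\vep,r,k,i'}\, T_r^{p(\uplam_{r,1}-1+\vep)}, \quad p\geq 1.
\]
Since there are only finitely many relevant pairs $(i,i')$ (at most $d_rT_r^k$), taking the maximum of these constants yields a single $c_{\vep,r,k}$ that works uniformly in $p$ and in the coordinate. Summing over the $i'$ that contribute to $O_{r,p}^*(f,I_{r,i}^k)$ gives, after enlarging $c_{\vep,r,k}$ if necessary,
\[
O_{r,p}^*(f,I_{r,i}^k) \leq c_{\vep,r,k}\, T_r^{p(\uplam_{r,1}-1+\vep)}, \quad i\in\Theta_{r,k},
\]
which together with the $p$-independent bound on $\xi_{r,k}$ completes the proof.

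The only mildly delicate step is the topological identification showing that every $i'$ occurring in $\widetilde V(f,r,k,p)$ indexes a level-$(k{+}1)$ basic interval \emph{outside} $\mB_{r,k+1}$. Once this is in hand, Lemma~\ref{lem:box-up-eks}(1) and Lemma~\ref{lem:osci-box-dim} combine with the finiteness of the index set to give the desired bound, and the rest of the argument is bookkeeping.
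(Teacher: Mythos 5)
Your proposal is correct and follows essentially the same route as the paper: start from \eqref{eq:3-vector-pass-2}, note that every interval contributing to $\widetilde{V}(f,r,k,p)$ lies outside $\mB_{r,k+1}$, bound its oscillation sum via Lemma~\ref{lem:box-up-eks}(1) and Lemma~\ref{lem:osci-box-dim}, take a maximum over the finitely many indices, and absorb the $p$-independent vector $\xi_{r,k}$ using \eqref{eq:xiup} and the positivity of the exponent.
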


\begin{proof}
	From Theorem~\ref{thm:vsm-pass-tmp}, it suffices to show that for any $k\geq 1$ and $\vep>0$, 
	there exists $c_{\vep,k}>0$ such that for all $p \in \mathbb{Z}^+$,
	\begin{equation}\label{eq:Lem4.9-1}
		\widetilde{V}(f,r,k,p) +\xi_{r,k} \leq c_{\vep,k} {T_r}^ {p (\uplam_{r,1}-1+\vep)} \bfe_{r,k} .
	\end{equation}
	
	Recall that for each $i\in \Theta_{r,k}$,
	\[
	\big(\widetilde{V}(f,r,k,p)\big)_i=
	 O_{r,p}^*(f,I_{r,i}^k)=
	\sum_{i':\, I_{r,i'}^{k+1}\subset I_{r,i}^k\setminus (B_{r,k+1})^{\circ}} O_{r,p}(f,I_{r,i'}^{k+1}).
	\]
	From Lemma~\ref{lem:box-up-eks}, 
	for each $1\leq i'\leq d_r {T_r}^{k}$ with $I_{r,i'}^{k+1}\subset I_{r,i}^k$ and $I_{r,i'}^{k+1}\not\in 
	\mB_{k+1}$, 
	\[
	  \updimB \big(\Gamma f|_{I_{r,i'}^{k+1}} \big) 	\leq \uplam_{r,k+1} \leq\uplam_{r,1}.
	\]
	Combining this with Lemma~\ref{lem:osci-box-dim}, there exists $c_{\vep}(r,k,i,i')>0$ such that
	\[
	  O_{r,p}(f,I_{r,i'}^{k+1}) \leq  c_{\vep}(r,k,i,i') \cdot {T_r}^{p ( \uplam_{r,1} -1+\vep)}, \quad \forall p \in \mathbb{Z}^+ .
	\]
	Notice that
	$ 
	\card\big(\{i':\, I_{r,i'}^{k+1}\subset I_{r,i}^k\setminus (B_{r,k+1})^{\circ}\}\big)\leq T_r.
	$
	Thus, we define
	\[
	  c_{\vep,r,k}'=T_r \max_{i \in \Theta_{r,k} } \Big( \max_{i':\, I_{r,i'}^{k+1}\subset I_{r,i}^k\setminus (B_{r,k+1})^{\circ}} c_{\vep}(r,k,i,i')\Big).
	\]
	Then $c_{\vep,r,k}'>0$ and for all $i\in \Theta_{r,k}$ and $p\in \Z^+$,
	\begin{equation}\label{eq:Lem4.9-2}
		\big(\widetilde{V}(f,r,k,p)\big)_i\leq c_{\vep,r,k}' {T_r}^{p ( \uplam_{r,1} -1+\vep)}.
	\end{equation}
	
	By \eqref{eq:xiup}, for each $i\in \Theta_{r,k}$,
	\begin{equation*}
		(\xi_{r,k})_i\leq \lVert \xi_{r,k}\rVert_1 \leq\sum_{n \in \Lambda_r}\Big(2M_{f,B_{r,k}} \Var (S_n,D_n)+\Var(q_n,D_n)\Big)<\infty.
	\end{equation*}
	Combining this with $\uplam_{r,1}-1\geq 0$, we have $(\xi_{r,k})_i\leq \lVert \xi_{r,k}\rVert_1 {T_r}^{p(\uplam_{r,1}-1+\vep)}$. By letting $c_{\vep,r,k}=c_{\vep,r,k}'+\lVert \xi_{r,k}\rVert_1$, we know from this inequality and \eqref{eq:Lem4.9-2} that for all $i\in \Theta_{r,k}$ and $p\in \Z^+$,
	\[
	\big(\widetilde{V}(f,r,k,p)\big)_i + (\xi_{r,k})_i\leq c_{\vep,r,k} {T_r}^{p ( \uplam_{r,1} -1+\vep)},
	\]
	which implies that \eqref{eq:Lem4.9-1} holds.
\end{proof}

%Now we can estimate 

In order to estimate the upper box dimension of $\Gamma f|_{B_{r,1}}$, we will use similar method in the proof of \cite[Theorem~4.6]{JR24}.
For every nonnegative $n \times n$ matrix $A=(a_{ij})$, we define 
$$\|A\|_\infty=\max_{1\leq i\leq n} \sum_{j=1}^n a_{ij}.$$
Let $\bfe=(1,\ldots,1)^T\in \bR^{n}$. Then it is clear that 
\begin{equation}\label{eq:4.13}
	A \bfe \leq \|A\|_\infty \bfe.
\end{equation}
From \cite[Example 5.6.5]{HJ13}, $\lVert \cdot \rVert_\infty$ is a matrix norm. Thus
\begin{equation}\label{eq:4.14}
	\|A^q\|_\infty\leq (\|A\|_\infty)^q.
\end{equation}
Furthermore, from \cite[Corollary 5.6.14]{HJ13}, 
\begin{equation}\label{eq:4.15}
	\lim_{p\to\infty} (\rVert A^p\|_\infty)^{1/p}=\rho(A).
\end{equation}

%\blue{Suggest change \eqref{eq:4.15} into a lemma.}

%Assume that $\lim_{k \to \infty}\rho(\upM_{r,k}|_{\B_{r,k}})=\rho_r$ and $\rho_r>0$, then 

\begin{lem}\label{thm:box-dim-up-sc}
	Let $1 \leq r \leq m$. Then
	\begin{equation}\label{eq:4.16}
		\updimB \big(\Gamma f|_{B_{r,1}}\big) \leq\max\Big\{ 1+ \frac{\log  \uprho_r }{ \log T_r}, \uplam_{r,1} \Big\}.
	\end{equation}
\end{lem}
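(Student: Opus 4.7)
\textbf{Proof plan for Lemma~\ref{thm:box-dim-up-sc}.}

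The plan is to reduce the problem to estimating $\updimB(\Gamma f|_{B_{r,k}})$ for arbitrary $k$ and then pass to the limit. First, since $B_{r,1}=B_{r,k}\cup(B_{r,1}\setminus B_{r,k})$, the finite stability of upper box dimension together with Lemma~\ref{lem:box-up-eks}(1) gives
\[
  \updimB(\Gamma f|_{B_{r,1}})\leq \max\bigl\{\updimB(\Gamma f|_{B_{r,k}}),\, \uplam_{r,1}\bigr\}
\]
for every $k\geq 1$. Thus it suffices to bound $\updimB(\Gamma f|_{B_{r,k}})$ in terms of $\rho(\upM_{r,k}|_{\Theta_{r,k}})$ and $\uplam_{r,1}$, and then let $k\to\infty$ using Theorem~\ref{thm:rho-up-sub}(1).

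Next, fix $k\geq 1$ and $\vep>0$, and write $M_k=\upM_{r,k}|_{\Theta_{r,k}}$ and $w_p=V(f,r,k,p)$. Iterating Lemma~\ref{lem:vsm-pass-tmp-2} starting from $p=1$ produces
\[
  w_{p+1}\leq M_k^{\,p}\, w_1 + c_{\vep,r,k}\sum_{j=0}^{p-1} T_r^{(p-j)(\uplam_{r,1}-1+\vep)}\, M_k^{\,j}\,\bfe_{r,k}.
\]
Taking $\ell_1$-norms and using $w_1\leq\|w_1\|_\infty\bfe_{r,k}$ together with the inequalities $A\bfe\leq\|A\|_\infty\bfe$ in \eqref{eq:4.13} and $\|A^q\|_\infty\leq\|A\|_\infty^q$ in \eqref{eq:4.14}, I obtain a bound of the form
\[
  \|w_{p+1}\|_1 \leq C_1\|M_k^{\,p}\|_\infty + C_2\sum_{j=0}^{p-1} T_r^{(p-j)(\uplam_{r,1}-1+\vep)}\|M_k^{\,j}\|_\infty.
\]
By \eqref{eq:4.15}, for any $\vep'>0$ there is a constant $C(\vep')>0$ such that $\|M_k^{\,j}\|_\infty\leq C(\vep')(\rho(M_k)+\vep')^{j}$ for every $j\geq 0$. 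Setting $\alpha=\rho(M_k)+\vep'$ and $\beta=T_r^{\uplam_{r,1}-1+\vep}$, the geometric-type sum is bounded by a polynomial factor in $p$ times $\max(\alpha,\beta)^p$, so that
\[
  \|w_p\|_1 \leq C_3\, p\cdot\max(\alpha,\beta)^p, \qquad p\geq 1.
\]

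Plugging this into \eqref{eq:vec-dim-pass-2} of Lemma~\ref{lem:box-def-3} kills the polynomial factor and yields
\[
  \updimB(\Gamma f|_{B_{r,k}}) \leq \max\Bigl\{1+\tfrac{\log(\rho(M_k)+\vep')}{\log T_r},\, \uplam_{r,1}+\vep\Bigr\}.
\]
Letting $\vep'\to 0^+$ removes the $\vep'$ term, and combining with the reduction from the first step gives
\[
  \updimB(\Gamma f|_{B_{r,1}}) \leq \max\Bigl\{1+\tfrac{\log\rho(\upM_{r,k}|_{\Theta_{r,k}})}{\log T_r},\, \uplam_{r,1}+\vep\Bigr\}
\]
for every $k\geq 1$. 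Finally, letting $k\to\infty$ via Theorem~\ref{thm:rho-up-sub}(1) and then $\vep\to 0^+$ produces \eqref{eq:4.16}.

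\textbf{Main obstacle.} The delicate step is the iteration/norm estimate in the second paragraph: one must control $\|M_k^{\,p} w_1\|_1$ and the convolution-like sum $\sum_j T_r^{(p-j)(\uplam_{r,1}-1+\vep)}\|M_k^{\,j}\|_\infty$ by a uniform constant times $p\cdot\max(\alpha,\beta)^p$, handling cleanly both the case $\alpha\neq\beta$ and the borderline case $\alpha=\beta$ (where a linear factor $p$ appears but is harmless in the logarithmic limit). One also has to be careful about the order of the three limits $\vep'\to 0$, $k\to\infty$, and $\vep\to 0$; this is done in the order stated above so that the constants $c_{\vep,r,k}$ and $C(\vep')$, which depend on the parameters held fixed at each stage, drop out harmlessly under division by $p\log T_r$.
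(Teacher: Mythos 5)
Your proposal is correct and takes essentially the same route as the paper: the same recursion from Lemma~\ref{lem:vsm-pass-tmp-2}, the same conversion to dimension via Lemma~\ref{lem:box-def-3}, the same treatment of $B_{r,1}\setminus B_{r,k}$ through Lemma~\ref{lem:box-up-eks}, and the same passage $k\to\infty$ via Theorem~\ref{thm:rho-up-sub}. The only (cosmetic) difference is that you unroll the recursion step by step and control the powers by a Gelfand-type bound $\lVert M_k^{\,j}\rVert_\infty\leq C(\vep')(\rho(M_k)+\vep')^{j}$ with an auxiliary $\vep'$, whereas the paper iterates in blocks of length $p$ and then uses the arbitrariness of $p$ together with $(\lVert (\upM_{r,k}|_{\Theta_{r,k}})^{p}\rVert_\infty)^{1/p}\to\rho(\upM_{r,k}|_{\Theta_{r,k}})$ from \eqref{eq:4.15}.
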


\begin{proof}
	
	Fix $ 1 \leq r \leq m$.
	In this proof, for simplicity, we denote $\upM_{r,k}|_{\Theta_{r,k}}$ and $V(f,r,k,p)$ by $\upM_{k}$ and $V(k,p)$, respectively. 
	Write $\bfe_{k}  = (1,\ldots,1)^{T} \in \mathbb{R}^{\card (\Theta_{r,k})}$.

	%\red{Let $\bfe_{k} =\bfe_{r,k} = (1,\ldots,1)^{T} \in \mathbb{R}^{\card (\Theta_{r,k})}$. }
	
	%$\ell_{k}=\card (\Theta_{r,k})$ and vector $\bfe_{k} = (1,\ldots,1)^{T} \in \mathbb{R}^{\ell_{k}}$.   
	
	For any $k\in\Z^+$ and $\vep>0$, from Lemma~\ref{lem:vsm-pass-tmp-2}, there exists $c_{\vep,k}>0$ such that 
	\begin{equation*}%\label{eq:vsm-pass-tmp-2}
		V(k,t+1) \leq \upM_{k} V(k,t)+c_{\vep,k} {T_r}^ {t (\uplam_{r,1}-1+\vep)} \bfe_{k}, \quad t \in \Z^+.
	\end{equation*}
	%and $c_{\vep,k} \bfe_k \geq V(k,0)$.
	Hence, for all $p,q \in \Z^+$,
	\begin{equation}\label{eq:tmp-5-1}
		V(k,(q+1)p) 
		\leq \big(\upM_k\big)^p V(k,qp) +  c_{\vep,k}\sum_{\ell=0}^{p-1}  \big(\upM_k\big)^{p-1-\ell}   
		 {T_r}^ {(qp+\ell) (\uplam_{r,1}-1+\vep)} \bfe_{k}     .
	\end{equation}
	Fix $p \in \Z^+$.
	Choose $\alpha_{k,p}>0$ big enough such that
	$V(k,p) \leq \alpha_{k,p} \bfe_k$ and 
	\[
	  c_{\vep,k} \sum_{\ell=0}^{p-1}\big(\upM_k\big)^{p-1-\ell}    {T_r}^ {\ell (\uplam_{r,1}-1+\vep)} \bfe_{k} \leq \alpha_{k,p} \bfe_k.
	\]
	Let $\beta={T_r}^ {(\uplam_{r,1}-1+\vep)}$.
	Then $\beta \geq {T_r}^{\vep} >1$. From \eqref{eq:tmp-5-1},
	\begin{equation*}%\label{eq:tmp-5-2}
		V(k,(q+1)p) 
		\leq  \big(\upM_k\big)^p V(k,qp) + {\beta}^{pq} \alpha_{k,p} \bfe_{k}     .
	\end{equation*}
	By induction and using $V(k,p) \leq \alpha_{k,p} \bfe_k$,
	\[
	   V(k,(q+1)p) \leq  \alpha_{k,p} \sum_{\ell=0}^{q} {\beta}^{p(q-\ell)}  \big(\upM_k\big)^{p\ell}   \bfe_{k}.
	\]
	Notice that from \eqref{eq:4.13} and \eqref{eq:4.14}, 
	$$(\upM_k)^{p\ell} \bfe_k\leq \|(\upM_k)^{p\ell}\|_\infty \bfe_k\leq \big(\| (\upM_k)^p\|_\infty\big)^\ell \bfe_k.$$ 
	Combining this with the fact that for all $a>1$, $b\geq 0$ and $0\leq \ell\leq q$,
	\[
	a^{q-\ell}b^\ell \leq \max\{a^q,b^q\}\leq a^{q+1}+ b^{q+1},  
	\]
	%$$\sum_{\ell=0}^q \big( \|(\upM_k)^p\|_\infty\big)^\ell {\beta_{p}}^{q-\ell}\leq %(q+1)\Big(\big(\lVert(\upM_k)^{p}\rVert_\infty\big)^{q+1}+{\beta_p}^{q+1}\Big),$$
	we have 
	\[
	V(k,(q+1)p) \leq (q+1) \alpha_{k,p}\Big( \big( \lVert(\upM_k)^{p}\rVert_\infty \big)^{q+1} + {\beta}^{p(q+1)} \Big)  \bfe_{k} .
	\]
	Thus, 
	$
	\lVert V(k,qp) \rVert_1 
	\leq q \alpha_{k,p} \big( \big( \lVert(\upM_k)^{p}\rVert_\infty \big)^q + {\beta}^{pq} \big)  \lVert  \bfe_{k} \rVert_1
	$
	for all $q \in \Z^+$.
	Hence, 
	\begin{align*}
		& \varlimsup_{q \to \infty} \frac{\log \big( \lVert V(k,pq) \rVert_1 +1\big)}{pq \log T_r} \\
		\leq & \varlimsup_{q \to \infty} \frac{\log \Big( q \alpha_{k,p}\Big( \big( \lVert(\upM_k)^{p}\rVert_\infty \big)^q + {\beta}^{pq} \Big)  \lVert  \bfe_{k} \rVert_1+1\Big)}{p q\log T_r} \\
		= &  \frac{\log \big( \max\big\{   \lVert (\upM_k)^{p}\rVert_\infty  , \beta^p \big\}\big)}{p \log T_r} \\
		= & \max\Big\{  \frac{\log   \big( \lVert (\upM_k)^{p} \rVert_\infty \big) }{p \log T_r}, \uplam_{r,1}-1+\vep \Big\}.
	\end{align*}
	Thus, from Lemma~\ref{lem:box-def-3}, we have
	\begin{align*}
		\updimB \big(\Gamma f|_{B_{r,k}}\big) 	
		&\leq 1+\varlimsup_{q \to \infty} \frac{\log \big( \lVert V(k,pq) \rVert_1 +1\big)}{pq \log T_r} \\
		&\leq \max\Big\{ 1+ \frac{\log   \big( \lVert(\upM_k)^{p}\rVert_\infty \big)  }{p \log T_r}, \uplam_{r,1}+\vep \Big\}.
	\end{align*}
	
	Notice that $\lim_{p\to\infty} (\rVert (\upM_k)^p\|_\infty)^{1/p}=\rho(\upM_k)$ (see \eqref{eq:4.15}). Thus, from the arbitrariness of $p$ and $\vep$,
	\begin{equation*}%\label{eq:upbox-tmp-1}
		\updimB\big(\Gamma f|_{B_{r,k}}\big) 	
		\leq\max\Big\{  1+ \frac{\log  \rho(\upM_{k})  }{ \log T_r},  \uplam_{r,1}\Big\}.
	\end{equation*}
	
	% Corollary~\ref{cor:box-up-eks-3},
	
	From Lemma~\ref{lem:box-up-eks},
	$
	\updimB\big(\Gamma f|_{ B_{r,1} \setminus B_{r,k}}\big) 		\leq \uplam_{r,1}
	$
	for all $k \in \Z^+$. Thus, 
	\[
	\updimB \big(\Gamma f|_{B_{r,1} }\big) 	
	\leq\max\Big\{  1+ \frac{\log  \rho(\upM_{k})  }{ \log T_r},  \uplam_{r,1}\Big\}, \quad  k \in \Z^+.
	\]
	By letting $k$ tend to infinity, we know from Theorem~\ref{thm:rho-up-sub} that \eqref{eq:4.16} holds.
\end{proof}

\subsection{Lower box dimension of $\Gamma f|_{B_{r,1}}$ }

\begin{lem}\label{lem:var-infty}
	Assume that $1 \leq r \leq m$, $k_1\geq 1$ and $S_n$ is positive on $D_n \cap B_{r,k_1}$ for all $n\in \Lambda_r$. 
	 If there exists a closed interval $J \subset B_{r,k_1}$ such that $\Var(f,J)=\infty$,
	then $\Var(f,I_{r,i}^k)=\infty$ for all $k\geq 1$ and all $i \in \Theta_{r,k}$.
\end{lem}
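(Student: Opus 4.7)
The plan is to reduce the hypothesis to a single basic interval $I_{r,i_0}^{k_1}$ at level $k_1$, spread the infinite variation sideways across all of $\Theta_{r,k_1}$ using the strong-connectivity path from Lemma \ref{lem3.3}, and then propagate up and down in level. Since $J\subset B_{r,k_1}=\bigcup_{i\in\Theta_{r,k_1}}I_{r,i}^{k_1}$, writing $J=\bigcup_{i\in\Theta_{r,k_1}} (J\cap I_{r,i}^{k_1})$ as a finite union of closed sub-intervals meeting only at endpoints and invoking additivity of total variation yields some $i_0\in\Theta_{r,k_1}$ with $\Var(f,I_{r,i_0}^{k_1})=\infty$.

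The technical heart is a BV-transfer lemma: if $E\subset D_n$ is a closed interval on which $S_n$ is bounded below by some $c>0$, then $\Var(f,E)=\infty$ implies $\Var(f,L_n(E))=\infty$. Indeed, writing $f=S_n^{-1}(S_n f)$ and applying the standard product estimate $\Var(uv,E)\le\|u\|_\infty\Var(v,E)+\|v\|_\infty\Var(u,E)$ together with $\Var(S_n^{-1},E)\le c^{-2}\Var(S_n,E)$, one obtains $\Var(f,E)\le c^{-1}\Var(S_n f,E)+c^{-2}\|f\|_\infty\Var(S_n,E)$. Given (A2) and $\|f\|_\infty<\infty$ (continuity on a compact interval), this forces $\Var(S_n f,E)=\infty$, and the functional equation $f\circ L_n=S_n f+q_n$ from \eqref{eq:RFIF-Rec-Expression} together with (A3) then gives $\Var(f,L_n(E))=\Var(S_n f+q_n,E)=\infty$.

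With this tool in hand I spread at level $k_1$. For any target $i\in\Theta_{r,k_1}$, Lemma \ref{lem3.3} supplies a sequence $j_0=i_0,j_1,\dots,j_t=i$ in $\Theta_{r,k_1}$ with $I_{r,j_{\ell-1}}^{k_1}\subset D_{r,j_\ell}^{k_1}\subset D_{n_\ell}$, where $n_\ell\in\Lambda_r$ is determined by $I_{r,j_\ell}^{k_1}\subset I_{n_\ell}$. Because $I_{r,j_{\ell-1}}^{k_1}\subset D_{n_\ell}\cap B_{r,k_1}$, the hypothesis combined with continuity and compactness produces a positive lower bound for $S_{n_\ell}$ on $I_{r,j_{\ell-1}}^{k_1}$. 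Applying the BV-transfer lemma at each step and using $L_{n_\ell}(I_{r,j_{\ell-1}}^{k_1})\subset L_{n_\ell}(D_{r,j_\ell}^{k_1})=I_{r,j_\ell}^{k_1}$, an easy induction along the path gives $\Var(f,I_{r,i}^{k_1})=\infty$ for every $i\in\Theta_{r,k_1}$.

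It remains to extend to all levels. For $k\ge k_1$, every $I_{r,i}^{k+1}\in\mB_{r,k+1}$ equals $L_n(I_{r,j}^k)$ for some $n\in\Lambda_r$ and $j\in\Theta_{r,k}$ with $D_{r,i}^{k+1}=I_{r,j}^k\subset D_n\cap B_{r,k}\subset D_n\cap B_{r,k_1}$, so the BV-transfer lemma again propagates infinite variation from level $k$ to level $k+1$, giving the result for all $k\ge k_1$ by induction. For $k<k_1$ and any $i\in\Theta_{r,k}$, Lemma \ref{lem:Theta-simple-facts}(4) provides some $i'\in\Theta_{r,k_1}$ with $I_{r,i'}^{k_1}\subset I_{r,i}^k$, and monotonicity of total variation in the interval finishes the job. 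I expect the main obstacle to be the BV-transfer lemma; once it is proved, the rest is a bookkeeping exercise on the basic-interval structure, guided by the strong-connectivity path and Lemma \ref{lem:Theta-simple-facts}(4).
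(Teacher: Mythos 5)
Your proposal is correct: the localization of the infinite variation to some $I_{r,i_0}^{k_1}$, the forward spreading along the path from Lemma~\ref{lem3.3}, the level-raising via $I_{r,i}^{k+1}=L_n(I_{r,j}^k)$ with $I_{r,j}^k\subset D_n\cap B_{r,k_1}$, and the descent to levels $k<k_1$ via Lemma~\ref{lem:Theta-simple-facts}(4) are all sound, and the positivity-plus-compactness step that furnishes the lower bound $c>0$ on each interval used is exactly what the hypothesis supplies. The route differs from the paper's in orientation and in the analytic ingredient. The paper argues by contradiction: it assumes $\Var(f,I_{r,i}^k)<\infty$ for some $k>k_1$, uses the lower estimate in \eqref{eq:Lem4-3-1} in the reverse direction (finite variation of the image $I_{r,i}^k$ forces finite variation of the preimage $D_{r,i}^k$), propagates finiteness backwards along the Lemma~\ref{lem3.3} path at level $k$, then descends level by level through the identity $\{D_{r,j}^k:\,I_{r,j}^k\in\mB_{r,k}\}=\mB_{r,k-1}$ down to level $k_1$, contradicting $\Var(f,J)=\infty$; you instead push infinitude forwards, which is the contrapositive of the same propagation principle. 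Your transfer step is proved with the BV product estimate for $f=S_n^{-1}(S_nf)$, which is fine, but note it is also an immediate consequence of machinery already in the paper: letting $p\to\infty$ in the lower inequality of \eqref{eq:Lem4-3-1} and invoking Lemma~\ref{lem:VarOinfty} gives $\Var(f,L_n(E))\geq \min_{x\in E}|S_n(x)|\cdot\Var(f,E)-\xi$ (this is precisely how the example in Section~6 treats $I_4$), so the ``main obstacle'' you anticipated is already available and your bookkeeping then closes the argument. One cosmetic advantage of your version is that it handles the spreading and the level changes in a direct induction without the final contradiction; the paper's version avoids introducing the product/quotient estimates for BV functions.
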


\begin{proof}
	Define $\lows^*_{r}=\min\{ |S_n(x)|:\, x\in D_n\cap B_{r,k_1}, n \in \Lambda_r \}$.
	Since $S_n$ is positive on $D_n \cap B_{r,k_1}$ for all $n\in \Lambda_r$, we have $\lows^*_{r}>0$.
	
	First we claim that $\Var(f,I_{r,i}^k)=\infty$ for all $k> k_1$ and all $i \in \Theta_{r,k}$.
	We prove this by contradiction. Assume that there exist $k> k_1$ and $i\in \Theta_{r,k}$ such that $\Var(f,I_{r,i}^k)<\infty$. Notice that $D_{r,i}^k\in \mathcal{B}_{r,k-1}$. Thus 
	\begin{equation}\label{eq:SnLowBdd}
	    \min\{|S_n(x)|:\, x\in D_{r,i}^k\}\geq s_r^*.
	\end{equation}
	
	Let $n\in \Lambda_r$ satisfy $I_{r,i}^k \subset I_n$. From \eqref{eq:Lem4-3-1} and \eqref{eq:SnLowBdd}, 
	\[
	O_{r,p}(f,D_{r,i}^k) \leq (\lows_{r}^{*})^{-1} \big( O_{r,p}(f, I_{r,i}^k) +\xi_{r,k,i} \big) ,
	\]
	where $\xi_{r,k,i}\leq 2M_{f,[x_0,x_N]} \Var (S_n,D_{r,i}^k ) +\Var(q_n,D_{r,i}^k)<\infty$. Hence, by letting $p$ tend to infinity, we know from 
	Lemma~\ref{lem:VarOinfty} that 
	\[ 
	\Var(f,D_{r,i}^k) \leq  (\lows_{r}^{*})^{-1} \big( \Var(f, I_{r,i}^k) +\xi_{r,k,i}\big)<\infty.
	\]
	As a result, for all $ \ell \in \Theta_{r,k} $ with $I_{r,\ell}^k \subset D_{r,i}^k$, we have
	$$ \Var(f,I_{r,\ell}^k) \leq \Var(f,D_{r,i}^k) <\infty.$$
	
	From Lemma~\ref{lem3.3}, for any $ j \in \Theta_{r,k} $, there exists a finite sequence $\{i_t\}_{t=0}^p$ in $\Lambda_r$, such that $i_0=j$, $i_p=i$, and
    $I_{r,i_{t-1}}^k \subset D_{r,i_{t}}^k$ for all $1\leq t \leq p$.
    It follows from $i_p=i$ that $\Var(f,I_{r,i_p}^k)<\infty$, which gives $\Var(f,D_{r,i_p}^k)<\infty$. Combining this with $I_{r,i_{p-1}}^k \subset D_{r,i_{p}}^k$, we have $\Var(f,I_{r,i_{p-1}}^k)<\infty$ so that $\Var(f,D_{r,i_{p-1}}^k)<\infty$. Continuing this process, we can obtain $\Var(f,I_{r,i_{t}}^k)<\infty$ and $\Var(f,D_{r,i_{t}}^k)<\infty$ for all $0\leq t\leq p$. Thus $\Var(f,I_{r,j}^k)<\infty$ since $i_0=j$.
    
    From the arbitrariness of $j$, we have $ \Var(f,I_{r,j}^k) <\infty$ and $ \Var(f,D_{r,j}^k) <\infty$ for all $j \in \Theta_{r,k}$.
    Notice that
    \[
    \big\{  D_{r,j}^k:\, I_{r,j}^k \in \mB_{r,k}\big\} = \mB_{r,k-1}.
    \]
    Thus, $ \Var(f,I_{r,j}^{k-1}) <\infty$ for all $j \in \Theta_{r,k-1}$ whenever $k> k_1$. 
    By induction, $\Var(f,I_{r,j}^{k_1})<\infty$ for all $j\in \Theta_{r,k_1}$. Combining this with $J\subset B_{r,k_1}=\bigcup_{j\in \Theta_{r,k_1}}I_{r,j}^{k_1}$, we have $\Var(f,J)<\infty$, which is a contradiction. 
    
%    In particular, $\Var(f,E)<\infty$, which is a contradiction.
    
    For any $1\leq k\leq k_1$ and $i\in \Theta_{r,k}$, by Lemma~\ref{lem:Theta-simple-facts}(4), there exists $i'\in \Theta_{r,k_1+1}$ such that $I_{r,i'}^{k+1}\subset I_{r,i}^k$. Thus, by the claim, $\Var(f,I_{r,i}^k)\geq \Var(f,I_{r,i'}^{k'})=+\infty$.
\end{proof}

\begin{lem}\label{thm:box-low-con-1}
	Let $1 \leq r \leq m$ and $k_1\in\bZp$. Assume that $S_n$ is positive on $D_n \cap B_{r,k_1}$ for all $n\in \Lambda_r$, and $\Var(f,E)=\infty$ for some $E \in \mathcal{B}_{r,k_1}$. Then 
	\begin{equation}\label{eq:lem4.13-1}
		\lowdimB \big( \Gamma f|_{B_{r,1}}\big) 
		\geq 1+ \frac{\log \uprho_r}{ \log T_r } .
	\end{equation}
\end{lem}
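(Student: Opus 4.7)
The plan is to show, for each sufficiently large $k$, the pointwise bound
\[
   \lowdimB\big(\Gamma f|_{B_{r,k}}\big)\geq 1+\frac{\log\rho(\lowM_{r,k}|_{\Theta_{r,k}})}{\log T_r},
\]
and then let $k\to\infty$ using Theorem~\ref{thm:rho-up-sub}(2). Since $B_{r,k}\subset B_{r,1}$, monotonicity of $\lowdimB$ and the convergence $\rho(\lowM_{r,k}|_{\Theta_{r,k}})\nearrow \uprho_r$ then give \eqref{eq:lem4.13-1}. If $\uprho_r\leq 1$ the bound is trivial (since $\lowdimB \Gamma g \geq 1$ for graphs of continuous functions), so I may assume $\uprho_r>1$ and set $\lambda_k:=\rho(\lowM_{r,k}|_{\Theta_{r,k}})$, so that $\lambda_k>1$ for all large $k$.

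Fix $k\geq \max\{k_0,k_1+1\}$, where $k_0$ comes from Theorem~\ref{thm:irr}(2) and ensures that $\lowM_{r,k}|_{\Theta_{r,k}}$ is irreducible, and $k_1+1$ ensures $D_{r,i}^k\in \mB_{r,k-1}\subset B_{r,k_1}$ for every $i\in\Theta_{r,k}$ via Lemma~\ref{lem:Theta-simple-facts}(5). Applying Perron--Frobenius (Lemma~\ref{th:PF}) to the transposed matrix yields a strictly positive left eigenvector $\bfw$ with $\bfw^T(\lowM_{r,k}|_{\Theta_{r,k}})=\lambda_k\bfw^T$. Left-multiplying the lower inequality \eqref{eq:3-vector-pass-1} of Theorem~\ref{thm:vsm-pass-tmp} by $\bfw^T$, and discarding the nonnegative contribution of $\widetilde V(f,r,k,p)$, produces the scalar affine recursion
\[
   a_{p+1}\geq \lambda_k\, a_p - c,\qquad a_p:=\bfw^T V(f,r,k,p),\ \ c:=\bfw^T\xi_{r,k}<\infty.
\]
Since $\lambda_k>1$, iterating gives
\[
   a_p-\frac{c}{\lambda_k-1}\geq \lambda_k^{\,p-p_0}\Big(a_{p_0}-\frac{c}{\lambda_k-1}\Big),\quad p\geq p_0,
\]
provided the bracket on the right is positive.

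The final ingredient is to certify that $a_{p_0}$ can be pushed above $c/(\lambda_k-1)$ by choosing $p_0$ large. Here the unbounded-variation hypothesis enters: since $\Var(f,E)=\infty$ for some $E\in \mB_{r,k_1}$ and $S_n>0$ on $D_n\cap B_{r,k_1}$, Lemma~\ref{lem:var-infty} yields $\Var(f,I_{r,i}^k)=\infty$ for every $i\in\Theta_{r,k}$. Lemma~\ref{lem:VarOinfty} then forces $O_{r,p}(f,I_{r,i}^k)\to\infty$ as $p\to\infty$, and the positivity of $\bfw$ gives $a_p\to\infty$. Choosing $p_0$ with $a_{p_0}>1+c/(\lambda_k-1)$ yields $a_p\geq \lambda_k^{p-p_0}$ for all $p\geq p_0$, whence $\|V(f,r,k,p)\|_1 \geq a_p/\|\bfw\|_\infty \geq C_k\lambda_k^p$ for some $C_k>0$. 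Feeding this into \eqref{eq:vec-dim-pass-1} of Lemma~\ref{lem:box-def-3} gives the desired level-$k$ bound, and sending $k\to\infty$ finishes the argument.

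The main obstacle is the bookkeeping of the three separate ``large enough'' thresholds (irreducibility $k_0$, variation $k_1+1$, and the ignition time $p_0$), together with the verification that the Perron eigenvector $\bfw$ and the constant $c=\bfw^T\xi_{r,k}$ depend only on $k$ (not on $p$), so that the geometric factor $\lambda_k^p$ is not destroyed by the affine inhomogeneity $-c$. Once $\lambda_k>1$ and $a_{p_0}$ can be forced above the fixed point $c/(\lambda_k-1)$, the rest is essentially a scalar induction followed by the passage to the limit via Theorem~\ref{thm:rho-up-sub}(2).
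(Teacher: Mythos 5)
Your proposal is correct and follows essentially the same route as the paper: a Perron--Frobenius positive eigenvector combined with the recursive inequality of Theorem~\ref{thm:vsm-pass-tmp}, ignition of the induction via Lemma~\ref{lem:var-infty} and Lemma~\ref{lem:VarOinfty}, then Lemma~\ref{lem:box-def-3} and the passage $k\to\infty$ through Theorem~\ref{thm:rho-up-sub}. The only cosmetic differences are that you scalarize with a left eigenvector at $\lambda_k$ itself while the paper runs a vector induction with a right eigenvector and an auxiliary $\tau<\rho(\lowM_{r,k}|_{\Theta_{r,k}})$; just remember to include in your choice of $k$ the additional threshold guaranteeing $\lambda_k>1$ (the paper's $k_2$), which you acknowledge but omit from the explicit maximum.
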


\begin{proof}
	
	Fix $ 1 \leq r \leq m$.
	In this proof, for simplicity, we denote $\lowM_{r,k}|_{\Theta_{r,k}}$, $V(f,r,k,p)$ and $\xi_{{r,k}}$ by $\lowM_{k}$, $V(k,p)$ and $\xi_{k}$, respectively. 
	Notice that $\{B_{r,k}\}$ is decreasing on $k$ and $S_n$ is positive on $D_n \cap B_{r,k_1}$ for all $n\in \Lambda_r$. Thus $S_n$ is positive on $D_n \cap B_{r,k}$ for all $k\geq k_1$ and all $n\in \Lambda_r$. Hence, from the proof of Theorem~\ref{thm:irr}(2), $\lowM_{k}$ is irreducible for all $k > k_1$. 
	
	Now we prove \eqref{eq:lem4.13-1}. Without loss of generality, we may assume that $\uprho_r>1$. From Theorem~\ref{thm:rho-up-sub}, there exists a positive integer $k_2\geq k_1$, such that $\rho(\lowM_k)>1$ for all $k>k_2$. Fix $k >k_2$. 
	From Theorem~\ref{thm:vsm-pass-tmp}, for all $p\geq 1$,
	\begin{equation}\label{eq:vsm-pass-tmp-6}
		V(k,p+1) \geq \lowM_{k}V(k,p) + \widetilde{V}(f,r,k,p) -\xi_{k}\geq \lowM_{k}V(k,p) -\xi_{k}.
	\end{equation}

	Given $1 <\tau <\rho(\lowM_k)$,
	by Perron-Frobenius Theorem, we can choose a positive eigenvector $\bfw_{k}$ of $\lowM_{k}$ such that $ \lowM_{k} \bfw_{k}=\rho(\lowM_{k})\bfw_{k}$ and
	$$
	\bfw_{k} \geq \frac{1}{\rho(\lowM_{k})-\tau}\xi_{k}. 
	$$
	From Lemma~\ref{lem:var-infty}, for all $i \in \Theta_{r,k}$, we have $\lim_{p \to \infty}O_{r,p}(f,I_{r,i}^k)=\Var(f,I_{r,i}^k)=\infty$.
	Thus, there exists $p_0\in \mathbb{Z}^+$ such that $V(k, p_0) \geq \bfw_{k}$.

	From \eqref{eq:vsm-pass-tmp-6}, 
	$$
	V(k, p_0+1)  \geq \rho(\lowM_{k})  \bfw_{k}-\xi_{k}
	\geq \rho(\lowM_{k})  \bfw_{k}-\big(\rho(\lowM_{k})-\tau\big)\bfw_{k} =\tau \bfw_{k}.
	$$
	Notice that for all $p \in \mathbb{Z}^+$,
	\begin{align*}
		\rho(\lowM_{k}) \tau^p \bfw_{k} -\xi_{k} &=\rho(\lowM_{k})(\tau^p-1)\bfw_{k} +\rho(\lowM_{k})\bfw_{k}-\xi_{k} \\
		&\geq \tau (\tau^p-1) \bfw_{k} +\tau \bfw_{k}=\tau^{p+1} \bfw_{k}.
	\end{align*}
	Thus, by induction, $V(k, p_0+p) \geq \tau^p \bfw_k$ for all $p \in \mathbb{Z}^+ $. Hence, from Lemma~\ref{lem:box-def-3}, 
	\begin{align*}
		\lowdimB \big( \Gamma f|_{B_{r,k}}\big) 
		&\geq 1+ \varliminf_{p \to \infty} \frac{\log \big(\lVert V(k,p_0+p) \rVert_1 +1\big)}{(p_{0}+p) \log T_r } \\
		&\geq 1+\varliminf_{p \to \infty} \frac{\log\big(\tau^p \lVert \bfw_{k} \rVert_1 +1\big)}{(p_{0}+p)\log T_r } 
		= 1+ \frac{\log \tau}{ \log T_r } .
	\end{align*}
	From the arbitrariness of $\tau$, we have
	\[
	\lowdimB \big( \Gamma f|_{B_{r,1}}\big) 
	\geq\lowdimB \big( \Gamma f|_{B_{r,k}}\big) 
	\geq 1+ \frac{\log \rho(\lowM_{k})}{ \log T_r }.
	\]
	By letting $k$ tend to infinity, we know from Theorem~\ref{thm:rho-up-sub} that \eqref{eq:lem4.13-1} holds.
\end{proof}

\subsection{Box dimension of RFIFs}
For $1 \leq i,j \leq N$, we denote by $i\sim j$ if they belong to the same strongly connected component of $\{1,\ldots,N\}$. Otherwise, $i\not\sim j$.
Let
 $$\mA(i) = \{j : j \not\sim i \mbox{ and there is a path in $\{1,\dots,N\}$ from $j$ to $i$} \}.$$
Similarly as in \cite{RXY21}, we define the position $P(i)$, $i=1,2,\ldots,N$ recursively as follows: $P(i)=1$ if $\mA(i)=\emptyset$, otherwise $P(i)=1+\max\{P(j):\, j\in \mA(i)\}$.
 Clearly, $P(i) \leq N$ for all $i$, and $P(i) = P(j)$ if $i \sim j$.

\begin{thm}\label{thm:box-up-final}
We have
\[
	\updimB \Gamma f \leq 1+\max\Big\{  \frac{\log\uprho_1}{\log T_1},\frac{\log\uprho_2}{\log T_2},\ldots, \frac{\log \uprho_m}{ \log T_m},0 \Big\}.
\]
\end{thm}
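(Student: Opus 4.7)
The plan is to combine Lemma~\ref{thm:box-dim-up-sc} with an induction on the position $P(\Lambda_r) := P(n)$ (for any $n \in \Lambda_r$, which is well-defined since $i \sim j$ implies $P(i) = P(j)$). First I would record the simple observation that since $\{\Lambda_1,\ldots,\Lambda_m\}$ partitions $\{1,\ldots,N\}$, the sets $B_{r,1} = \bigcup_{n \in \Lambda_r} I_n$ cover $[x_0,x_N]$, so $\Gamma f = \bigcup_{r=1}^{m} \Gamma f|_{B_{r,1}}$, and by finite stability of upper box dimension it suffices to prove, for each $r$,
\[
\updimB \bigl(\Gamma f|_{B_{r,1}}\bigr) \leq D^{*}, \qquad D^{*} := 1 + \max\Bigl\{\tfrac{\log \uprho_1}{\log T_1},\ldots,\tfrac{\log \uprho_m}{\log T_m}, 0\Bigr\}.
\]

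The induction is on $P(\Lambda_r)$. For the base case $P(\Lambda_r) = 1$: if some $i \notin \Lambda_r$ satisfied $I_i \subset B_{r,0} \setminus B_{r,1}$, then $I_i \subset D_n$ for some $n \in \Lambda_r$, producing an edge from $i$ to $n$; since $i \not\sim n$, this would force $i \in \mA(n)$ and hence $P(\Lambda_r) = P(n) \geq 2$, a contradiction. Therefore $B_{r,0} = B_{r,1}$ and $\uplam_{r,1} = 1$ by definition. Lemma~\ref{thm:box-dim-up-sc} then yields $\updimB(\Gamma f|_{B_{r,1}}) \leq \max\{1 + \log \uprho_r/\log T_r,\, 1\} \leq D^{*}$.

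For the inductive step, assume the bound $\updimB(\Gamma f|_{B_{s,1}}) \leq D^{*}$ holds for every $s$ with $P(\Lambda_s) < P(\Lambda_r)$. Pick any $i$ with $I_i \subset B_{r,0}$ and $I_i \not\subset B_{r,1}$. Then $i \notin \Lambda_r$ and there exists $n \in \Lambda_r$ with $I_i \subset D_n$, giving an edge from $i$ to $n$ and hence $i \in \mA(n)$, so that $P(i) < P(n) = P(\Lambda_r)$. Letting $r'$ be the index with $i \in \Lambda_{r'}$, we have $P(\Lambda_{r'}) = P(i) < P(\Lambda_r)$, and $I_i \subset B_{r',1}$. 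The induction hypothesis then gives
\[
\updimB \bigl(\Gamma f|_{I_i}\bigr) \leq \updimB \bigl(\Gamma f|_{B_{r',1}}\bigr) \leq D^{*}.
\]
Taking the maximum over such $i$ yields $\uplam_{r,1} \leq D^{*}$. Applying Lemma~\ref{thm:box-dim-up-sc} once more,
\[
\updimB \bigl(\Gamma f|_{B_{r,1}}\bigr) \leq \max\Bigl\{1 + \tfrac{\log \uprho_r}{\log T_r},\, \uplam_{r,1}\Bigr\} \leq D^{*},
\]
completing the induction and the theorem.

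The only subtle point is ensuring the induction is set up on the right quantity: $\uplam_{r,1}$ is defined via intervals $I_i$ sitting in $B_{r,0}$ but missing $B_{r,1}$, and the content of the argument is that such $i$ necessarily live in strongly connected components strictly earlier in the partial order induced by $P$. Once this monotonicity is in hand, everything else is a direct application of Lemma~\ref{thm:box-dim-up-sc} together with finite stability of $\updimB$; I do not foresee any genuine technical obstacle beyond correctly verifying that $\uplam_{r,1} = 1$ in the base case.
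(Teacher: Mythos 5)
There is a genuine gap, and it is right at the first step: the strongly connected components $\Lambda_1,\ldots,\Lambda_m$ do \emph{not} in general partition $\{1,\ldots,N\}$, so the claimed covering $\Gamma f=\bigcup_{r=1}^m \Gamma f|_{B_{r,1}}$ can fail. In this paper's setting an index may be transient, i.e.\ belong to no strongly connected component (this is exactly the case ``$i$ is not a vertex of any strongly connected component'' that the paper's own proof treats separately). The paper's Section~6 example shows this concretely: there $\Lambda_1=\{2,3,4\}$, $\Lambda_2=\{5,6\}$, the index $1$ lies in no component (since $I_1=[0,1/6]$ is contained in no $D_n$), and $B_{1,1}\cup B_{2,1}=[1/6,1]$ misses $I_1$. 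Your induction therefore never bounds $\updimB\big(\Gamma f|_{I_1}\big)$ at all. The same defect reappears inside your inductive step: for an interval $I_i\subset B_{r,0}$ with $I_i\not\subset B_{r,1}$ you ``let $r'$ be the index with $i\in\Lambda_{r'}$,'' but such an $r'$ need not exist, so the induction hypothesis (which you only formulated for components) cannot be invoked.

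The missing ingredient is Lemma~\ref{lem:box-pass}(1). The paper runs the induction on the position $P(i)$ of \emph{every} index $i\in\{1,\ldots,N\}$, not on components: when $i$ lies in some $\Lambda_r$ it argues as you do (base case $\uplam_{r,1}=1$, inductive bound $\uplam_{r,1}\le d^*$, then Lemma~\ref{thm:box-dim-up-sc}); but when $i$ is transient it uses $\updimB\big(\Gamma f|_{I_i}\big)\le\updimB\big(\Gamma f|_{D_i}\big)=\max\{\updimB\big(\Gamma f|_{I_j}\big):I_j\subset D_i\}$, where every such $j$ satisfies $j\in\mA(i)$ and hence $P(j)<P(i)$, so the inductive hypothesis applies. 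With that extra case (and with the induction hypothesis stated for all indices of smaller position, so that the intervals entering $\uplam_{r,1}$ are covered whether or not they lie in a component), your argument becomes essentially the paper's proof; without it, the bound on $\updimB\Gamma f$ is not established on the transient part of $[x_0,x_N]$.
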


\begin{proof}
Write $d^* =1 + \max\Big\{  \frac{\log\uprho_1}{\log T_1},\ldots, \frac{\log \uprho_m}{ \log T_m},0 \Big\}$.  
Since $\updimB \Gamma f =\max_{1\leq i\leq N}\updimB \big(\Gamma f|_{I_{i}}\big),$
 it suffices to show that $ \updimB \big(\Gamma f|_{I_i}\big) \leq d^*$ for all $1\leq i\leq N$. We prove this by induction on the position of $i$.

Assume that $P(i)=1$. Then $\mA(i)=\emptyset$. From this fact, it is easy to see that
$$\big\{j\in \{1,\ldots,N\}:\, \mbox{there is a path in $\{1,\ldots,N\}$ from $j$ to $i$} \big\}$$
is a strongly connected component $\Lambda_r$ of $\{1,\ldots,N\}$ for some $1\leq r\leq m$ and $i\in \Lambda_r$. 
Furthermore, we have $\bigcup_{n\in \Lambda_r} D_n=\bigcup_{n\in \Lambda_r}I_n$. Thus $\uplam_{r,1}=1$. From Lemma~\ref{thm:box-dim-up-sc},
$$\updimB \big(\Gamma f|_{I_i}\big)\leq \updimB \big(\Gamma f|_{B_{r,1}}\big) \leq \max\Big\{ 1+ \frac{\log    \uprho_r   }{ \log T_r}, \uplam_{r,1}\Big\}\leq d^*. $$

Fix $1\leq \ell<\max\{P(i):\, 1\leq i\leq N\}$. Assume that $\updimB \big(\Gamma f|_{I_j} \big) \leq d^*$ for all $1\leq j \leq N$ with $P(j)\leq \ell$. Let $1\leq i\leq N$ such that $P(i)= \ell+1$. 

Firstly, we assume that $i$ is not a vertex of any strongly connected component.
Then
$i'\not\sim i$ for all $i'\in\{1,\ldots,N\}$.
For any $1\leq j \leq N$ with $I_{j}\subset D_{i}$, it is clear that there exists a path in $\{1,\ldots,N\}$ from $j$ to $i$. Combining this with $j\not\sim i$, we have $j\in \mA(i) $, which implies that 
$P(j)\leq \ell$. From Lemma~\ref{lem:box-pass} and inductive assumption,
\[ \updimB \big(\Gamma f|_{I_i}\big)  \leq  \updimB \big(\Gamma f|_{D_i}\big) =\max\big\{\updimB \big(\Gamma f|_{I_j}\big):\,I_{j}\subset D_{i}\big\}\leq d^*. \]

Now we assume that $ i \in \Lambda_r$ for some $1\leq r\leq m $. From Lemma~\ref{thm:box-dim-up-sc},
$$\updimB \big(\Gamma f|_{I_i}\big)\leq \updimB \big(\Gamma f|_{B_{r,1}}\big) \leq \max\Big\{ 1+ \frac{\log    \uprho_r   }{ \log T_r}, \uplam_{r,1}\Big\}. $$ 
It follows from $P(i)\geq 2$ that $\bigcup_{n\in \Lambda_r} I_n \subsetneq\bigcup_{n\in \Lambda_r} D_n$, which gives
\begin{align*}
	\uplam_{r,1} =\max\Big\{ \updimB \big(\Gamma f|_{I_n}\big): I_n\subset B_{r,0} \textrm{ and } I_n\not\subset B_{r,1} \Big\}. 
%	& \leq \sup\Big\{ \updimB(\Gamma f|_{I_n}): P(n)\leq \ell\Big\}\leq d^*.
\end{align*}
Notice that $P(n)\leq \ell$ for all $n$ with $I_n\subset B_{r,0}$ and $I_n\not\subset B_{r,1}$.
Hence, by inductive assumption, $\uplam_{r,1}\leq d^*$. As a result, $\updimB \big(\Gamma f|_{I_i}\big) \leq d^*$.
\end{proof}

In the case that $S_n$ is positive on $D_n \cap K_r$ for all $n \in \Lambda_r$, we can obtain better estimate on the upper and lower box dimensions of $\Gamma f|_{B_{r,1}}$ than Lemmas~\ref{thm:box-dim-up-sc} and \ref{thm:box-low-con-1}.
\begin{lem}\label{lem:6.1}
	Let $1 \leq r \leq m$ and $S_n$ is positive on $D_n \cap K_r$ for all $n \in \Lambda_r$. Then
	\begin{equation*}%\label{eq:6.1}
		d_r^* \leq
		\lowdimB \big(\Gamma f|_{B_{r,1}} \big)  \leq
		\updimB \big(\Gamma f|_{B_{r,1}} \big)  \leq\max \{ d_r^*, \uplam_{r,1}  \},
	\end{equation*}
	where $d_r^*$ is defined by \eqref{eq:def-drstar}. 
\end{lem}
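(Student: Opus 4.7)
The plan is to prove the two inequalities by splitting into two cases according to which branch of the definition \eqref{eq:def-drstar} applies to $d_r^*$. Before starting, I would remark that the integer $k_r^*$ is well-defined under the hypothesis: since $K_r=\bigcap_{k\geq 0} B_{r,k}$ is a decreasing intersection of compact sets, $S_n$ is continuous, and $\Lambda_r$ is finite, positivity of $S_n$ on $D_n\cap K_r$ for every $n\in\Lambda_r$ propagates by compactness to positivity on $D_n\cap B_{r,k}$ for all sufficiently large $k$. This is the same argument already used inside the proofs of Theorem~\ref{thm:irr}(2) and Theorem~\ref{thm:rho-up-sub}(2).

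In the \emph{first case}, where $\Var(f,E)=+\infty$ for some $E\in\mB_{r,k_r^*}$, so that $d_r^*=1+\log\uprho_r/\log T_r$, both inequalities follow directly from previously established lemmas. For the upper bound, Lemma~\ref{thm:box-dim-up-sc} yields $\updimB(\Gamma f|_{B_{r,1}})\leq\max\{1+\log\uprho_r/\log T_r,\uplam_{r,1}\}=\max\{d_r^*,\uplam_{r,1}\}$. For the lower bound, the hypotheses of Lemma~\ref{thm:box-low-con-1} are satisfied with $k_1=k_r^*$ (positivity of $S_n$ on $D_n\cap B_{r,k_r^*}$ holds by the very definition of $k_r^*$, and the infinite-variation condition is exactly the Case~1 assumption), giving $\lowdimB(\Gamma f|_{B_{r,1}})\geq 1+\log\uprho_r/\log T_r=d_r^*$.

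In the \emph{second case}, where $\Var(f,E)<+\infty$ for every $E\in\mB_{r,k_r^*}$ and hence $d_r^*=1$, the lower bound $\lowdimB(\Gamma f|_{B_{r,1}})\geq 1$ is automatic from the continuity of $f$ on any $I_n\subset B_{r,1}$ and the monotonicity $\mathcal{N}_\varepsilon(A)\leq\mathcal{N}_\varepsilon(A\cup B)$. For the upper bound I would first observe that finite variation on each of the finitely many $E\in\mB_{r,k_r^*}$ forces, via Lemma~\ref{lem:VarOinfty} and Lemma~\ref{lem:box-dim-new-x1}, the bound $\updimB(\Gamma f|_E)\leq 1$ for each such $E$. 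Finite stability of upper box dimension then gives $\updimB(\Gamma f|_{B_{r,k_r^*}})\leq 1=d_r^*$. When $k_r^*\geq 2$, Lemma~\ref{lem:box-up-eks}(1) applied with $k=k_r^*$ controls the complement, yielding $\updimB(\Gamma f|_{B_{r,1}\setminus B_{r,k_r^*}})\leq\uplam_{r,1}$, and finite stability again combines these into $\updimB(\Gamma f|_{B_{r,1}})\leq\max\{d_r^*,\uplam_{r,1}\}$. The trivial subcase $k_r^*=1$ requires no splitting since $B_{r,1}\setminus B_{r,k_r^*}=\emptyset$.

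I do not expect a significant obstacle: essentially all of the analytical work has been done in the preceding lemmas, and the proof of Lemma~\ref{lem:6.1} reduces to a clean dichotomy together with the finite stability of $\updimB$. The only minor care needed is the verification that $k_r^*$ is well-defined and that the hypothesis of Lemma~\ref{thm:box-low-con-1} can be invoked with $k_1=k_r^*$ in Case~1, and the separation of the edge subcase $k_r^*=1$ in Case~2.
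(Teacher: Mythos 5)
Your proposal is correct and follows essentially the same route as the paper: the same dichotomy on whether $\Var(f,E)=\infty$ for some $E\in\mB_{r,k_r^*}$, with Case~1 handled by Lemmas~\ref{thm:box-dim-up-sc} and \ref{thm:box-low-con-1} (taking $k_1=k_r^*$) and Case~2 by Lemmas~\ref{lem:VarOinfty}, \ref{lem:box-dim-new-x1} and \ref{lem:box-up-eks} together with finite stability of the upper box dimension. Your extra remarks on the well-definedness of $k_r^*$ and the trivial subcase $k_r^*=1$ are fine but not needed beyond what the paper already establishes.
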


\begin{proof}
	It is clear that $	\lowdimB \big(\Gamma f|_{B_{r,1}} \big)  \geq 1$.
	Let $k_r^* \in \Z^+$ be the smallest element such that $S_n$ is positive on $D_n \cap B_{r,k_r^*}$ for all $n \in \Lambda_r$.
	
	In the case that $\Var(f,E)=\infty$ for some $E \in \mB_{r,k_r^*}$, we have by definition that $d_r^*=1+\log\uprho_r / \log T_r$.
	Thus, the lemma holds from Lemmas~\ref{thm:box-dim-up-sc} and \ref{thm:box-low-con-1}.
	
	In the case that $\Var(f,E)< \infty$ for all $E \in \mB_{r,k_r^*}$, we have by definition that $d_r^*=1$. Thus $\lowdimB \big(\Gamma f|_{B_{r,1}}\big)\geq 1=d_r^*$. On the other hand, by Lemmas~\ref{lem:VarOinfty} and \ref{lem:box-dim-new-x1}, $\dim_B \big(\Gamma f|_E\big)=1$ for all $E\in \mB_{r,k_r^*}$ so that $\dim_B \big(\Gamma f|_{B_{r,k_r^*}}\big)=1$.
	From Lemma~\ref{lem:box-up-eks}, we have
	$
	\updimB\big(\Gamma f|_{ B_{r,1} \setminus B_{r,k}}\big) 		\leq \uplam_{r,1}
	$
	for all $k \in \Z^+$. Thus, 
	$
	\updimB\big(\Gamma f|_{ B_{r,1}} \big) 		\leq \max \{ 1,\uplam_{r,1} \}.
	$
\end{proof}

\begin{proof}[Proof of Theorem~\ref{thm:Main-Results-x1}]
From Theorem~\ref{thm:box-up-final}, it suffices to prove the second part of the theorem. Assume that $S_n $ is positive on $D_n \cap K_r$ for all $1\leq r\leq m$ and all $n\in \Lambda_r$. Write $d_0^*=\max \{ d_1^*, d_2^*, \ldots, d_m^*,1\}.$
It directly follow from Lemma~\ref{lem:6.1} and $\lowdimB \Gamma f \geq 1$ that 
$\lowdimB \Gamma f \geq d_0^*$.

By using similar arguments in Theorem~\ref{thm:box-up-final}, we can show that $\updimB \Gamma f \leq d_0^*$. We sketch the proof for completeness.

In the case that $P(i)=1$, there exists $1\leq r\leq m$ such that $i\in \Lambda_r$ and $\uplam_{r,1}=1$. From Lemma~\ref{lem:6.1},
$$\updimB \big(\Gamma f|_{I_i}\big)\leq \updimB \big(\Gamma f|_{B_{r,1}}\big) \leq \max \{ d_r^*, \uplam_{r,1}\}=\max \{ d_r^*, 1\}\leq d_0^*. $$

Fix $1\leq \ell<\max\{P(i):\, 1\leq i\leq N\}$. Assume that $\updimB \big(\Gamma f|_{I_j}\big) \leq d_0^*$ for all $1\leq j \leq N$ with $P(j)\leq \ell$. Let $1\leq i\leq N$ such that $P(i)= \ell+1$. 

Firstly, we assume that $i$ is not a vertex of any strongly connected component. 
Then from Lemma~\ref{lem:box-pass} and inductive assumption,
\[ \updimB \big(\Gamma f|_{I_i}\big)  \leq  \updimB \big(\Gamma f|_{D_i}\big) =\max\big\{\updimB \big(\Gamma f|_{I_j}\big):\,I_{j}\subset D_{i}\big\}\leq d_0^*. \]

Now we assume that $ i \in \Lambda_r$ for some $1\leq r\leq m $. 
It follows from $P(i)\geq 2$ and inductive assumption that
\begin{align*}
	\uplam_{r,1} =\max\Big\{ \updimB \big(\Gamma f|_{I_n}\big): I_n\subset B_{r,0} \textrm{ and } I_n\not\subset B_{r,1} \Big\}\leq d_0^*. 
\end{align*}
Thus, from Lemma~\ref{lem:6.1},
$$\updimB \big(\Gamma f|_{I_i}\big)\leq \updimB \big(\Gamma f|_{B_{r,1}}\big) \leq \max \{ d_r^*, \uplam_{r,1}\}\leq d_0^*. $$ 
This completes the proof of the theorem.
\end{proof}

\section{An example}

%\begin{exam}
	In this section, we present an example to illustrate our result.
	
	Let $N=6$, $(x_5,y_5)=(5/6,0)$ and $(x_n,y_n)=(n/6,1)$ for all other $n\in\{0,1,\ldots,6\}$. Set 
	\begin{equation}\label{eq:5.1}
		D_1=D_4=[1/3,5/6], \quad D_2=D_3=[1/6,2/3], \quad  D_5=D_6=[2/3,1].
	\end{equation}
	For each $1\leq n\leq 6$, we define functions $L_n(x)$, $S_n(x)$ and $q_n(x)$ on $D_n$ as follows.
	%(\blue{All $S_n(x)$ and $D_n(x)$ should be revised.})
	\begin{align*}
		&L_1(x)=\frac{1}{3}x-\frac{1}{9},      && S_1(x)=\frac{1}{2},	&&  q_1(x)=x+\frac{1}{6}, \\ 
		&L_2(x)=\frac{1}{3}x+\frac{1}{9},      && S_2(x)=-\frac{2}{5}x+\frac{49}{60},	&&  q_2(x)=\frac{2}{5}x+\frac{11}{60}, \\ 
		&L_3(x)=\frac{1}{3}x+\frac{5}{18},  &&  S_3(x)=-\frac{1}{5}x+\frac{47}{60},	&& q_3(x)=\frac{1}{5}x+\frac{13}{60}, \\
		&L_4(x)=\frac{1}{3}x+\frac{7}{18}, && S_4(x)=\frac{3}{5}x+\frac{2}{5},  && q_4(x)=\frac{6}{5}x, \\
		&L_5(x)=\frac{1}{2}x+\frac{1}{3}, && S_5(x)=\frac{2}{5}x+\frac{13}{30},  && q_5(x)=-\frac{17}{5}x+\frac{77}{30},  \\
		&L_6(x)=-\frac{1}{2}x+\frac{4}{3}, && S_6(x)=-\frac{1}{2}, &&q_6(x)=-3x+\frac{7}{2}.
	\end{align*}
	Then for any $1 \leq n \leq 6$, we set
	$$
	W_n(x,y)=\big(L_n(x),S_n(x)y +q_n(x)\big), \quad (x,y) \in D_n \times \R.
	$$
	Let $f\in C([0,1])$ be the unique function determined by the recurrent IFS $\{W_1,\ldots,W_6\}$. See Figure~\ref{fig:1}.

	\begin{figure}[htbp]
		\centering
		\includegraphics[scale=0.4]{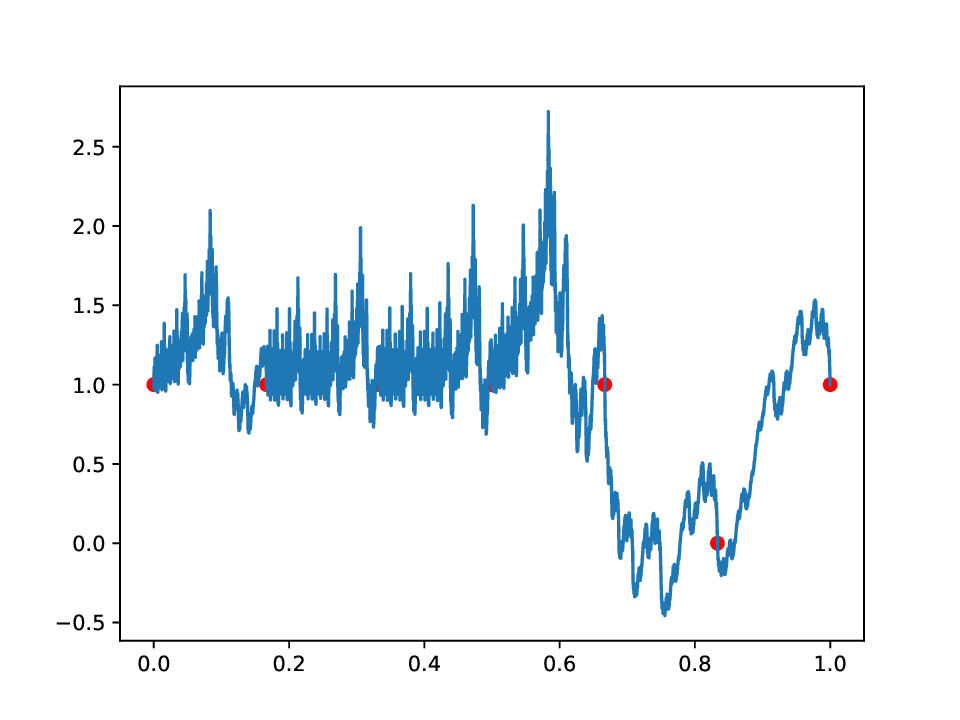}
		\caption{Graph of $f$.}
		\label{fig:1}
	\end{figure}

	From \eqref{eq:5.1}, there are two strongly connected
	components $\Lambda_1,\Lambda_2$ of $\{1,\ldots,6\}$ with $\Lambda_1 =\{2,3,4\}$ and $\Lambda_2 =\{5,6\}$.

	We discuss the component $\Lambda_2$ firstly. It is clear that $\bigcup_{n\in \Lambda_2} D_n=\bigcup_{n\in \Lambda_2} I_n=[2/3,1]$. From Lemma~\ref{lem:equivalent-cr}, $ B_{2,k}=B_{2,1}=[2/3,1]$ for all $k \in \Z^+$. 
	Now we check that $\Var(f,B_{2,1})=\infty$.
	It is clear that $\wdt{V}(f,2,k,p)$ is a zero vector for all $k , \, p \in \mathbb{Z}^+$.	
	By definition, in the case that $k=2$, 
	\[
	\lowM_{2,2}|_{\Theta_{2,2}}=\lowM_{2,2}=
	\begin{pmatrix}
		7/10 & 11 /15  & 0 & 0 \\
		0 & 0 & 23/30  & 4/5 \\
		0 & 0 & 1/2 & 1/2 \\
		1/2 & 1/2 & 0 & 0 
	\end{pmatrix}.
	\]
	We observe that every column sum of $\lowM_{2,2}$ is bigger than $6/5$.	
	From Theorem~\ref{thm:vsm-pass-tmp},
	for any $p \in \mathbb{Z}^+$, we have
	\[
	V(f,2,2,p+1) %\geq \big(\lowM_{2,2}\big) V(f,2,2,p) + \wdt{V}(f,2,2,p) -\xi_{2,2}
	\geq \big(\lowM_{2,2}\big) V(f,2,2,p)  -\xi_{2,2}.
	\]
	%(\blue{From this place, all data should be revised.})
		Notice that $\lVert V(f,2,2,p)\rVert_1=O_{2,p+2}(f,B_{2,1})$ for all $ p \in \Z^+$.	Thus,
	\begin{equation}\label{eq:exam-pass}
		O_{2,p+3}(f,B_{2,1}) \geq \frac{6}{5} O_{2,p+2}(f,B_{2,1})  -||\xi_{2,2}||_1, \quad  p \in \Z^+.
	\end{equation}

	Now we are going to estimate $||\xi_{2,2}||_1$.
	Let $\alpha$ be the maximum of $|f|$ on $B_{2,1}$.
	From \eqref{eq:RFIF-Rec-Expression}, we have the following two inequalities,
	\begin{align*}
	\sup\limits_{x \in I_5}|f(x)|  
	&\leq \alpha \cdot \sup\limits_{x \in D_5} |S_5(x)| +  \sup\limits_{x \in D_5 } |q_5(x)| = \frac{5}{6} \alpha+\frac{5}{6}, \\
	\sup\limits_{x \in I_6}|f(x)| 
	&\leq \alpha \cdot \sup\limits_{x \in D_6 } |S_6(x)| + \sup\limits_{x \in D_6 } |q_6(x)| = \frac{1}{2} \alpha +\frac{3}{2}.
	\end{align*}
	Hence, 
	$$
		\alpha =\sup\limits_{x \in I_5 \cup I_6}|f(x)|   \leq  \max \Big\{\frac{5}{6} \alpha+\frac{5}{6}, \frac{1}{2} \alpha +\frac{3}{2} \Big\}.
	$$
	Thus, $\alpha \leq 5$. Furthermore, by simple calculation, 
	\[
	\Var(S_5,D_5)={2}/{15}, \; \Var(q_5,D_5)={17}/{15}, \; \Var(S_6,D_6)=0, \; \Var(q_6,D_6)=1.
	\]
	Thus, from \eqref{eq:xiup} and noticing that $M_{f,B_{2,2}}\leq M_{f,B_{2,1}}$,
	\begin{align*}
		\lVert\xi_{2,2}\rVert_1
		\leq \sum_{n \in \{5,6\}} \Big(2 \alpha \cdot\Var(S_n,D_n) +\Var(q_n,D_n) \Big) \leq \frac{52}{15}.
	\end{align*}

	Combining this with \eqref{eq:exam-pass}, for all $p\in \Z^+$,
	\[
	O_{2,p+3}(f,B_{2,1}) \geq   \frac{6}{5} O_{2,p+2}(f,B_{2,1})-\frac{52}{15}, 
	\]
	and therefore,
	\[O_{2,p+3}(f,B_{2,1})-\frac{52}{3} > \frac{6}{5} \Big( O_{2,p+2}(f,B_{2,1})-\frac{52}{3} \Big).\]
	By direct calculation, $O_{2,10}(f,B_{2,1}) > {52}/{3}$. Thus,
	$$\Var(f,B_{2,1})=\lim_{p \to \infty} O_{2,p} (f,B_{2,1})=\infty.$$
	From Lemma~\ref{lem:var-infty}, $\Var(f,I_5)=+\infty$ so that $d_2^*=1+\log \rho_2/\log T_2$.

	By letting $n=4$ and $J=I_5=[2/3,5/6]$ in \eqref{eq:Lem4-3-1}, we have
	\[
	    O_{r,p}(f,L_4(J)) \geq  \min_{x \in J} |S_4(x)| \cdot  O_{r,p}(f,J)-\xi   , \quad  p \geq 1,
	\]
	where $\xi = 2M_{f,J} \Var(S_4,J)+\Var(q_4,J)<\infty$.
	As a result, 
	\[
	    \Var(f,I_4) \geq \Var(f,L_4(J)) =\lim_{p \to \infty} O_{r,p}(f,L_4(J)) \geq \min_{x \in J} |S_4(x)|  \cdot  \Var(f,J)-\xi  =\infty.
	\]
    Thus, $d_1^*=1+\log \rho_1/\log T_1$.
	
	Using numerical calculation, we obtain Table~\ref{tab:example}.
	Hence, $\rho_1 \approx 1.800$ and $\rho_2 \approx 1.260$.
	Notice that $T_1=3$ and $T_2=2$.
	Thus, from Theorem~\ref{thm:Main-Results-x1}, 
	\begin{align*}
		\dim_B \Gamma f 
		= 1+ \max \Big\{0 , \frac{\log \uprho_1}{\log T_1}, \frac{\log \uprho_2}{\log T_2} \Big\}
		\approx  \max \Big\{1, \frac{\log 5.400}{\log 3}, \frac{\log 2.520}{\log 2} \Big\} 
		\approx  1.535.
	\end{align*}

\begin{table}[h]
\renewcommand\arraystretch{1.5}
\centering
\caption{spectral radii of $\upM_{1,k}|_{\Theta_{1,k}}$ and $\upM_{2,k}|_{\Theta_{2,k}}$ . }
\label{tab:example}
\begin{tabular}{|c|c|c|c|c|c|c|c|c|c|c|c|c|c|c|}
\hline
$k$ & 1 & 2 & 3 & 4 &5 & 6 & 7 & 10 \\ \hline
$\rho (\upM_{1,k}|_{\Theta_{1,k}})$ & 1.8793 & 1.8259 & 1.8085 & 1.8027 & 1.8008 & 1.8002 & 1.8000   & 1.7999 \\ \hline
$\rho (\lowM_{1,k}|_{\Theta_{1,k}})$ & 1.7180 & 1.7722 & 1.7906 & 1.7968 & 1.7988 & 1.7995 & 1.7998  & 1.7999 \\ \hline
$\rho (\upM_{2,k}|_{\Theta_{2,k}})$ & 1.2925 & 1.2758 & 1.2678 & 1.2638 & 1.2618 & 1.2608 & 1.2603  & 1.2598 \\ \hline
$\rho (\lowM_{2,k}|_{\Theta_{2,k}})$ & 1.2272 & 1.2433 & 1.2516 & 1.2557 & 1.2577 & 1.2587 & 1.2592 & 1.2597 \\ \hline
\end{tabular}
\end{table}

\bigskip
\noindent{\bf Acknowledgements.}
The research of Li was supported by NSFC grant 11801592. The research of Liang was supported by the Natural Science Foundation of Higher Education Institutions of Jiangsu Province (22KJB110018). The research of Ruan was supported in part by NSFC grants 12371089, 12531004, and the Fundamental Research Funds for the Central Universities of China grant 2024FZZX02-01-01.

\bibliographystyle{amsplain}

\end{document}